\numberwithin{equation}{section} \numberwithin{figure}{section}
\numberwithin{table}{section}
        \newcommand{\Z}{\mathbb{Z}}
        \newcommand{\RP}{\mathbbm{R}\mathrm{P}}
        \newcommand{\R}{\mathbbm{R}}
        \newcommand{\lt}{\left(}
        \newcommand{\rt}{\right)}
        \newcommand{\incl}{\operatorname{incl}}
        \newcommand{\id}{\operatorname{id}}
        \newcommand{\MC}{\operatorname{MC}}
        \newcommand{\MCC}{\operatorname{MCC}}
        \newcommand{\MF}{\operatorname{MF}}
        \newcommand{\Ker}{\operatorname{Ker}}
        \newcommand{\quot}{\operatorname{quot}}
        \newcommand{\pinch}{\operatorname{pinch}}
        \newcommand{\coll}{\operatorname{b}}
        \newcommand{\ST}{\operatorname{ST}}
        \newcommand{\comp}{\, \scriptstyle \circ \displaystyle}
        \newcommand{\nonComm}{\amalg}
        \newcommand{\KoschKursiv}[1]{\ensuremath{\mathit{#1}}}
        \newcommand{\KoschScript}[1]{\ensuremath{\mathcal{#1}}}
        \newcommand{\hochdot}{\boldsymbol{\cdotp}}
        \newtheorem{thm}[equation]{Theorem}
        \newtheorem{cor}[equation]{Corollary}
        \newtheorem{lem}[equation]{Lemma}
        \newtheorem{prop}[equation]{Proposition}
  \theoremstyle{definition}
  \theoremstyle{remark}
        \newtheorem{rem}[equation]{Remark}
  \newtheoremstyle{example}{3pt}{3pt}{}{}{\bfseries}{:}{.5em}{}
  \theoremstyle{example}
        \newtheorem{exa}[equation]{Example}
        \newtheorem{problem}[equation]{Problem}
\newenvironment{con_and_not}
  {\vspace{1ex}{\itshape {\bfseries Conventions and notations.}}}%
  {}
\title{Nielsen coincidence numbers, Hopf invariants and spherical space forms}
\author{Ulrich Koschorke}
\begin{document}
\addtolength{\parindent}{2pt}

\date{}
\maketitle

\newcommand{\dedication}[1]{\begin{center}\itshape#1\normalfont\end{center}}
\dedication{Dedicated to Karl--Otto St{\"o}hr on occasion of his
70\textsuperscript{th} birthday.}

\newcommand{\classno}[1]{\footnotetext{\normalfont\itshape Mathematics Subject Classification\/ #1}}
\classno{54H25, 55M20 (primary), 55Q25, 55Q40, 57R90 (secondary)}

\newcommand{\extraline}[1]{\footnotetext{#1}}
\extraline{This work was supported by DFG (Deutsche
Forschungsgemeinschaft)}

\begin{abstract}
Given two maps between smooth manifolds, the obstruction to removing
their coincidences (via homotopies) is measured by minimum numbers.
In order to determine them we introduce and study an infinite
hierarchy of Nielsen numbers $\ N_i, \ i\, = \, 0, 1, \ldots, \infty
\, $. They approximate the minimum numbers from below with
decreasing accuracy, but they are (in principle) more easily
computable as $\ i\ $ grows. If the domain and the target manifold
have the same dimension (e.g. in the fixed point setting) all these
Nielsen numbers agree with the classical definition. However, in
general they can be quite distinct.

While our approach is very geometric the computations use the
techniques of homotopy theory and, in particular, all versions of
Hopf invariants (\`a la Ganea, Hilton, James..). As an illustration
we determine all Nielsen numbers and minimum numbers for pairs of
maps from spheres to spherical space forms. Maps into even
dimensional real projective spaces turn out to produce particularly
interesting coincidence phenomena.
\end{abstract}

\newcommand{\acknowledgementsname}{{\bfseries Acknowledgements}}
\newenvironment{acknowledgements}%
  {\vspace{1ex}\itshape\acknowledgementsname. \normalfont}%
  {}
\newenvironment{acknowledgement}{\renewcommand{\acknowledgementsname}{{\bfseries Acknowledgement}}%
  \begin{acknowledgements}}{\end{acknowledgements}}

\section{Introduction}

\indent

Consider (continuous) maps $\ f_1, f_2 \; \colon \; X
\longrightarrow Y \ $ between connected smooth manifolds without
boundary, $\ X \ $ being compact. We are interested in '{\itshape
essential}' aspects of their coincidence set
\begin{equation}\label{equ:1.1}
 C \; = \; C(f_1, f_2) \ := \ \lbrace x \; \in \; X \; \vert\; f_1(x) \; = \; f_2(x) \rbrace \, ,
\end{equation}
i.e. in those features which are preserved by homotopies $\ f_i \sim
f_i' ,\  i=1,2 \,$. Such essential phenomena can be measured to some
extend by the minimum numbers (of coincidence points and
pathcomponents, resp.)
\begin{equation}\label{equ:1.2}
 \MC(f_1, f_2) \; := \; \min\left\lbrace \# C(f_1' , f_2') \vert f_1' \sim f_1, f_2' \sim f_2 \right\rbrace
\end{equation}
and (even better)
\begin{equation}\label{equ:1.3}
 \MCC(f_1, f_2) \; := \; \min\left\lbrace \# \pi_0\left( C (f_1' , f_2') \right) \vert f_1' \sim f_1, f_2' \sim f_2 \right\rbrace \, .
\end{equation}
E.g. both numbers vanish precisely if the maps $\ f_1 , f_2 \ $ can
be deformed until they are coincidence free.

\begin{exa}\label{exa:1.4}
   {\bfseries topological fixed point theory.}
   Here
   $\ X \; = \; Y \, , \ f_2 \; = \; \id_X \, ,$
   and the principal object of study is the minimum number of fixed points
   \begin{equation*}
    \MF(f) \; := \; \min\left\lbrace \# C \left(f' , \id_X \right) \vert f' \sim f \right\rbrace \; = \; \MC(f, \id_X)
  \end{equation*}
  for maps
  $\ f \; \colon \; X \longrightarrow X \ $
  (cf [B], p.9; see also \cite{Br}).
  If it vanishes then so does the Lefschetz number, but the converse conclusion fails to hold in general.
  A powerful tool for a better understanding of minimum numbers was introduced by Jakob Nielsen in the 1920s when he described a lower bound
  $\ N(f) \ $
  of
  $\ \MC(f, \id_X) \, $.
  This 'Nielsen number' turned out to coincide with the minimum fixed point number precisely if
  $\ X \ $
  is not a surface with strictly negative Euler characteristic.
  (For an account of the spectacular history of this result see \cite{B}).
  \qed
\end{exa}

In general coincidence theory the geometry of generic coincidence
phenomena is much richer. E.g. when $\ X^m, Y^n \ $ are smooth
manifolds of dimensions $\ m > n \, $, then $\ C \ $ is generically
an ($m-n$)--manifold (and not just a finite set of isolated points).

In this paper we introduce an infinite hierarchy of (integer)
Nielsen coincidence numbers
\begin{equation}\label{equ:1.5}
  \left( \MC \, \geq \, \MCC \, \geq \, \right)\ N^{\#} \, \equiv \; N_0 \geq N_1 \geq N_2 \geq \ldots \geq N_r \geq \ldots \geq N_{\infty} \; \equiv \; \widetilde{N} \geq \, 0 \, .
\end{equation}
It interpolates between the sharpest ("nonstabilized") Nielsen
number $\ N_0 \; := \; N^{\#} \ $ introduced in \cite{K3} and the
("fully stabilized") Nielsen number $\ N_{\infty} \; := \;
\widetilde{N} \ $ (cf. \cite{K6}; $\ \widetilde{N} \ $ was
introduced and discussed originally under the name $\ N \ $ in
\cite{K2} and also in \cite{K3}, \cite{K4} and \cite{K5}).

For every pair $\ f_1, f_2 \; \colon \; X \longrightarrow Y \ $ of
maps and $\ r \; = \; 0, 1, 2, \ldots , \infty \ $ the Nielsen
number $\ N_r(f_1, f_2) \; = \; N_r(f_2, f_1) \; \in \; \Z \ $
depends only on the homotopy classes of $\ f_1 \ $ and $\ f_2 \, $.
It is extracted from the bordism class
\begin{equation}\label{equ:1.6}
  \omega_r (f_1, f_2) \; = \; \left[ \left( i_r \; \colon \; C(f_1', f_2') \; \subset \; X \times \R^r,\  \widetilde{g},\  \bar{g}_r \ \right) \right]
\end{equation}
which captures the geometric coincidence data of a generic pair $\ (
f_1', f_2' ) \ $ homotopic to $\ ( f_1, f_2 ) \, $: the vector
bundle isomorphism $\ \bar{g}_r \ $ describes the normal bundle of
the coincidence set $\ C(f_1', f_2') \, $, considered as a
submanifold of $\ X \times \R^r $, and $\ \widetilde{g} \ $ is a
canonical map into a certain pathspace $\ E(f_1, f_2) \ $ (the
so--called homotopy coincidence set). The decomposition of $\ E(f_1,
f_2) \ $ into its pathcomponents induces the {\itshape Nielsen
decomposition}
\begin{equation}\label{equ:1.7}
  C(f_1', f_2') \; = \; \underset{ A \; \in \; \pi_0(E(f_1, f_2)\,) }{ \nonComm } \ \widetilde{g}^{-1}(A)
\end{equation}
and the Nielsen number $\ N_r(f_1, f_2) \ $ counts those
pathcomponents which contribute nontrivially to $\ \omega_r(f_1,
f_2) \, $. (For more details compare \cite{K3}, \cite{K6}, as well
as sections \ref{sec:3} and \ref{sec:4}
below.)\\

In the setting of fixed point theory our Nielsen numbers $\ N_r$
(cf. \ref{equ:1.5}) all coincide with the classical Nielsen number.
However, in general they can be quite distinct: often they get
weaker but also more easily computable as $\ r \ $
increases.\\

The following {\bfseries {\itshape 2--step program}} suggests
itself.
\begin{enumerate}[label=\Roman*.]
  \item\label{TwoStepProgram:step1} {\bfseries {\itshape Decide when
    $\ \boldsymbol{\MCC(f_1, f_2)} $
    is equal to a Nielsen number
    $\ \boldsymbol{N_r(f_1, f_2)} \ $
    (and for which
    $\ \boldsymbol{r} $). }}
    In topological fixed point theory this was the central unsolved problem for nearly 60 years.
    In general coincidence theory complete answers have been given only in some simple settings; often they involve deep notions of differential topology and homotopy theory such as e.g. Kervaire invariants, all versions of Hopf invariants or the elements in the stable homotopy of spheres defined by invariantly framed Lie groups (cf. e.g. \cite{K6}, \cite{KR}).
  \item\label{TwoStepProgram:step2} {\bfseries {\itshape Determine the Nielsen numbers
    $\ \boldsymbol{N_r(f_1, f_2)} \, $.}}
\end{enumerate}

In this paper we concentrate our attention mainly on this step II 
and on the case when the domain of the maps $\ f_1, f_2 \ $ is a
sphere. Again all types of Hopf invariants (\`{a} la Ganea, Hilton,
James, \ldots) turn out to play an important r\^ole.
\providecommand{\KoschScript}[1]{\ensuremath{\mathit{#1}}}

We need some preliminary explanations. Choose an oriented compact
$n$--dimensional ball $\ B\ $ (with boundary sphere $\ \partial B \,
$), embedded in the universal covering space $\ \widetilde{Y} \ $ of
$\ Y \, $. Let
\begin{equation}\label{equ:1.8}
  b \; \colon \; \widetilde{Y} \; \longrightarrow \; \widetilde{Y} \diagup \left( \widetilde{Y} - \mathring{B} \right) \; = \; B \diagup \partial B \; \cong \; S^n
\end{equation}
denote the collapsing map. Moreover let
\begin{equation}\label{equ:1.9}
  H_{\KoschScript{C}} \; \colon \; \left[ S^m, \widetilde{Y} \right] \; \cong \; \pi_m(\widetilde{Y}) \; \longrightarrow \; \pi_m( S^n \flat \widetilde{Y} )
\end{equation}
be the Hopf--Ganea invariant homomorphism based on the cofibration
\begin{equation*}
  \KoschScript{C} \ \; \colon \ \; \partial B \ \; \subset \ \; \widetilde{Y} \setminus \mathring{B} \; \longrightarrow \; \widetilde{Y}
\end{equation*}
(cf. \cite{CLOT}, 6.44 and 6.45); here $\ S^n \flat \widetilde{Y} \
$ denotes the homotopy fiber of the inclusion of the one--point
union $\ S^n \vee \widetilde{Y} \ $ into $\ S^n \times \widetilde{Y}
\ $ (cf. \cite{G} (9) and \cite{CLOT}, §6.7).

In section \ref{sec:2} below we will present and use an explicit
geometric description of partial suspension homomorphisms
\begin{equation}\label{equ:1.10}
  e^r \; \colon \; \pi_m(S^n \flat \widetilde{Y}) \; \longrightarrow \; \pi_{m+r}(S^{n+r} \flat \widetilde{Y}) \, ,\quad r= 0, 1, 2, \ldots, \infty \, ,
\end{equation}
very closely related to those discussed by H. J. Baues (compare
\cite{Ba}, chapter 3).

\begin{thm}\label{thm:1.11}
  Let
  $\ Y \ $
  be a connected smooth $n$--dimensional manifold without boundary and write
  $\ k \; := \; \# \pi_1(Y) \ $
  for the order of the fundamental group.
  Also let
  $\ 0 \; \leq \; r \; \leq \; \infty \ $
  and assume
  $\ m \; \geq \; 2 \, $.
  Given
  $\ [f_1], [f_2] \; \in \; \pi_m(Y, y_0) \, $,
  let
  $\ [\widetilde{f}_1], [\widetilde{f}_2] \; \in \; \pi_m(\widetilde{Y}, \widetilde{y}_0) \  $
  be liftings to the universal covering space
  $\ \widetilde{Y} \ $
  of
  $\ Y \, $.
  \providecommand{\hochdot}{\boldsymbol{\cdotp}}
  \begin{enumerate}[label=,leftmargin=0cm]
    \item\label{thm:1.11:Case1} {\bfseries Case 1:
      $\ \boldsymbol{\pi_1(Y)} \ $
      is infinite or
      $\ \boldsymbol{Y} \ $
      is not compact or
      $\ \boldsymbol{m < n} \, $.}
      Then
      $\ \MCC(f_1, f_2) \; = \; 0 \ $
      and all Nielsen numbers vanish.
    \item\label{thm:1.11:Case2} {\bfseries Case 2:
      $\ \boldsymbol{2 \leq k < \infty} \, $.}
      Choose a map
      $\ a^{\hochdot} \; \colon \; (\widetilde{Y}, \widetilde{y}_0 ) \; \longrightarrow \; (\widetilde{Y},\widetilde{y}_0 ) \ $
      which is freely homotopic to a fixed point free selfmap
      $\ a \ $
      of
      $\ \widetilde{Y} \ $
      (e.g. to a covering transformation
      $\ a\, \neq \, $
      identity map).
      Then precisely one of the following four conditions holds (compare \ref{equ:1.8} and \ref{equ:1.9}):
      \begin{alignat*}{3}
    (*_k)\quad H_{\KoschScript{C}}(\widetilde{f}_1) &\;\neq\; H_{\KoschScript{C}}(\widetilde{f}_2)      &       &\text{ or }        &       b &\comp \widetilde{f}_1 \;\not\sim\; b \comp \widetilde{f}_2 \;\not\sim\; b \comp a^{\hochdot} \comp \widetilde{f}_1 \, ; \\
    (*_{k-1})\quad H_{\KoschScript{C}}(\widetilde{f}_1) &\;=\; H_{\KoschScript{C}}(\widetilde{f}_2)     &       &\text{ and }       &       b &\comp \widetilde{f}_1 \;\not\sim\; b \comp a^{\hochdot} \comp \widetilde{f}_1 \;\sim\; b \comp \widetilde{f}_2 \, ; \\
    (*_1)\quad H_{\KoschScript{C}}(\widetilde{f}_1) &\;=\; H_{\KoschScript{C}}(\widetilde{f}_2)     &       &\text{ and }       &       b &\comp a^{\hochdot} \comp \widetilde{f}_1 \;\not\sim\; b \comp \widetilde{f}_1 \;\sim\; b \comp \widetilde{f}_2 \, ; \\
    (*_0)\quad H_{\KoschScript{C}}(\widetilde{f}_1) &\;=\; H_{\KoschScript{C}}(\widetilde{f}_2)\quad        &       & \text{ and }\quad     &       b &\comp a^{\hochdot} \comp \widetilde{f}_1 \;\sim\; b \comp \widetilde{f}_1 \;\sim\; b \comp \widetilde{f}_2 \, .
      \end{alignat*}
      If the condition
      $\ (*_i) \ $
      is satisfied for
      $\ i \; = \; 0, 1, k-1 \text{ or } k \, $,
      then
      \begin{equation*}
    N^{\#}(f_1, f_2) \; = \; i \, .
      \end{equation*}
      Precisely the analogous result holds for
      $\ N_r(f_1, f_2) \, ,\ r \;=\; 0, 1, 2, \ldots, \infty \, $,
      when we replace
      $\ H_{\KoschScript{C}} \ $
      by
      $\ e^r \comp H_{\KoschScript{C}} \ $
      (cf. \ref{equ:1.10}) and
      $\ b \comp \widetilde{f} \ $
      by the $r$--fold (standard) suspension
      $\ E^r([ b \comp \widetilde{f} ]) \; \in \; \pi_{m+r}(S^{n+r}) \ $
      (for
      $\ \widetilde{f} \; = \; \widetilde{f}_i \, , \ a^{\hochdot} \comp \widetilde{f}_i \, , \ i= 1, 2 \, $).
    \item\label{thm:1.11:Case3} {\bfseries Case 3:
    $\ \boldsymbol{Y} \ $
    is simply connected and admits a fixed point free map
    $\ \boldsymbol{a} \,$.}
    Deform
    $\ a \ $
    to a base point preserving map
    $\ a^{\hochdot} \; \colon \; (Y, y_0) \; \longrightarrow \; (Y, y_0) \, $.
    Then
    \begin{equation*}
      N_r(f_1, f_2) \, = \,
      \begin{cases}
        1 &  \text{if } e^r \left( H_{\KoschScript{C}}(f_1) \right) \neq e^r \left( H_{\KoschScript{C}}(a^{\hochdot} \comp f_2) \right) \text{ or } E^r\left( \left[ b \comp f_1 \right] \right) \neq  E^r\left( \left[ b \comp a^{\hochdot} \comp f_2 \right] \right)  ; \\
        0 &  \text{otherwise.}
      \end{cases}
    \end{equation*}
  \end{enumerate}
\end{thm}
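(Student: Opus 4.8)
The plan is to pass to the universal covering $\widetilde Y\to Y$ and to reduce the computation of each Nielsen number to the behaviour of ``root''‑type coincidence invariants of pairs of maps into the simply connected manifold $\widetilde Y$, which in turn are detected by the collapse map $b$ of \eqref{equ:1.8}, the Hopf–Ganea invariant $H_{\KoschScript{C}}$ of \eqref{equ:1.9}, and their suspensions $E^{r}$, $e^{r}$ from Section~\ref{sec:2}. \emph{Case 1} is disposed of first: for $m<n$ a generic pair $(f_1',f_2')$ already has every lifted map $(\widetilde f_1',\gamma\circ\widetilde f_2')\colon S^m\to\widetilde Y\times\widetilde Y$ transverse to --- hence disjoint from --- the diagonal, while for $\pi_1(Y)$ infinite or $Y$ noncompact the covering $\widetilde Y$ is an open $n$‑manifold and a lifting‑and‑general‑position argument (classical; cf.\ \cite{K2}, \cite{K3}) removes all coincidences simultaneously. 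In each sub‑case $\MCC(f_1,f_2)=0$, so every $\omega_r(f_1,f_2)$ and hence every $N_r(f_1,f_2)$ vanishes.

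For \emph{Case 2} note that, $S^m$ being simply connected for $m\ge 2$, the maps $f_1,f_2$ lift to $\widetilde f_1,\widetilde f_2\colon S^m\to\widetilde Y$; since $\Gamma:=\pi_1(Y)$ acts freely, each coincidence point $x$ of $f_1,f_2$ satisfies $\widetilde f_1(x)=\gamma\circ\widetilde f_2(x)$ for a unique $\gamma\in\Gamma$, so
\[
 C(f_1,f_2)=\underset{\gamma\in\Gamma}{\nonComm}\ C(\widetilde f_1,\gamma\circ\widetilde f_2),
\]
and --- $X$ being simply connected --- this is precisely the Nielsen decomposition \eqref{equ:1.7}, with $\pi_0(E(f_1,f_2))\cong\Gamma$. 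Thus $N_r(f_1,f_2)$ counts those $\gamma\in\Gamma$ for which the contribution $\omega_r^{\gamma}$ of the $\gamma$‑th summand to $\omega_r(f_1,f_2)$ is nontrivial. The key step is to express $\omega_r^{\gamma}$ as the ``absolute'' coincidence invariant of the pair $(\widetilde f_1,\gamma\circ\widetilde f_2)$ of maps into $\widetilde Y$ and, using the structure theory of \cite{K3}, \cite{K6} together with the geometric description of $e^{r}$ in Section~\ref{sec:2}, to show that for maps out of a sphere such an invariant vanishes if and only if the collapsed maps agree after $r$‑fold suspension and the Hopf–Ganea invariants agree after $r$‑fold partial suspension --- i.e.\ $E^{r}([b\circ\widetilde f_1])=E^{r}([b\circ(\gamma\circ\widetilde f_2)])$ in $\pi_{m+r}(S^{n+r})$ and $e^{r}(H_{\KoschScript{C}}(\widetilde f_1))=e^{r}(H_{\KoschScript{C}}(\gamma\circ\widetilde f_2))$ in $\pi_{m+r}(S^{n+r}\flat\widetilde Y)$. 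The mechanism behind this reduction is the co‑$H$ (``loop‑sum'') structure carried by coincidence data over a sphere: it lets one replace $(\widetilde f_1,\gamma\circ\widetilde f_2)$ by a pair (constant, difference) and then read the resulting root set of the difference at the centre of $B$ off its collapse and Hopf invariant, stabilising by $E^{r}$, $e^{r}$ as in \eqref{equ:1.10}.

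For $\gamma=\id$ the reduction yields at once the asserted criterion for the identity class. For $\gamma\ne\id$ the extra input is that $\gamma$ is \emph{fixed point free}: then $(\widetilde f_1,\gamma\circ\widetilde f_1)$ is coincidence free and supplies a vanishing reference, so $\omega_r^{\gamma}$ is detected by how $\gamma\circ\widetilde f_2$ differs from $\gamma\circ\widetilde f_1$; transporting by the diffeomorphism $\gamma$ --- under which, by the disc theorem, $b\circ\gamma$ is homotopic to $b$ post‑composed with a self‑map $\bar\gamma$ of $S^n$ of degree $\pm1$ reflecting the orientation behaviour of $\gamma$, while $H_{\KoschScript{C}}$ transforms compatibly --- reduces the essentiality of the $\gamma$‑th class to a condition depending on $\gamma$ only through $\bar\gamma$. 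Taking $a$ to be a covering transformation, the class indexed by $a$ itself is thereby singled out --- its criterion being the one written with $a^{\hochdot}$, the passage $b\circ\widetilde f_1\not\sim b\circ(\gamma\circ\widetilde f_2)\Leftrightarrow b\circ\widetilde f_2\not\sim b\circ a^{\hochdot}\circ\widetilde f_1$ being legitimate because $\bar a$ is an invertible operation on homotopy classes --- while the remaining $k-1$ classes, the identity class among them, share a single criterion. Counting the four combinations (each of the two essentiality types on or off) gives $N_r(f_1,f_2)\in\{0,1,k-1,k\}$, corresponding precisely to $(*_0)$, $(*_1)$, $(*_{k-1})$, $(*_k)$ once the relations and their conjunctions are rewritten in terms of $b\circ\widetilde f_i$ and $b\circ a^{\hochdot}\circ\widetilde f_1$ by transitivity of ``$\sim$''; $r=0$ gives $N^{\#}$, general $r$ gives $N_r$, and $r=\infty$ gives $N_\infty=\widetilde N$. \emph{Case 3} is the one‑class version of the same argument, with $\widetilde Y=Y$: the single Nielsen class is inessential iff $E^{r}([b\circ f_1])=E^{r}([b\circ a^{\hochdot}\circ f_2])$ and $e^{r}(H_{\KoschScript{C}}(f_1))=e^{r}(H_{\KoschScript{C}}(a^{\hochdot}\circ f_2))$, the fixed point free map $a\simeq a^{\hochdot}$ again supplying the vanishing reference pair $(f_1,a^{\hochdot}\circ f_1)$.

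I expect the main obstacle to be precisely the structure‑theoretic step of the second paragraph: establishing, from \cite{K3}, \cite{K6} and the suspension apparatus of Section~\ref{sec:2}, that for maps out of a sphere the pair $\bigl(E^{r}(b\circ(\cdot)),\,e^{r}H_{\KoschScript{C}}(\cdot)\bigr)$ is a \emph{complete} obstruction to removing an individual Nielsen class, and --- for the non‑identity classes --- that this obstruction is governed by the orientation behaviour of the deck transformations alone; the latter rests on the freeness of the $\Gamma$‑action and on the disc theorem. Everything else is bookkeeping with the decomposition \eqref{equ:1.7} and the homomorphisms \eqref{equ:1.10}.
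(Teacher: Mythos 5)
Your overall strategy is the paper's: lift to $\widetilde Y$, index the Nielsen classes by deck transformations, reduce each class to a root invariant $\omega_r(*,\,\cdot\,)$, identify that invariant with the pair $\bigl(E^r([\coll\comp\,\cdot\,]),\,e^rH_{\KoschScript{C}}(\,\cdot\,)\bigr)$ via Theorem \ref{thm:3.3} and Corollaries \ref{cor:3.7}--\ref{cor:3.8}, and use a fixed point free map as a vanishing reference. Case 1 and Case 3 are fine in outline. But in Case 2 you have misassigned the two criteria to the Nielsen classes, and in doing so you have left the one genuinely delicate class unproved. Your vanishing--reference argument applies only to $\gamma\neq\id$ (every such deck transformation is fixed point free), and, carried out correctly, it yields for the $\gamma$--class the condition $\omega_r(\widetilde*,\gamma\comp\widetilde f_1)\neq\omega_r(\widetilde*,\gamma\comp\widetilde f_2)$; since transport by the diffeomorphism $\gamma$ is an isomorphism on both sides simultaneously, this is equivalent to the \emph{untwisted} condition $\omega_r(\widetilde*,\widetilde f_1)\neq\omega_r(\widetilde*,\widetilde f_2)$ for \emph{every} $\gamma\neq\id$, the class indexed by $a$ included. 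Your ``mixed'' comparison $b\comp\widetilde f_1\not\sim b\comp(\gamma\comp\widetilde f_2)$, which compares an untransported $\widetilde f_1$ with a transported $\widetilde f_2$, is not what the reference argument produces, and the ``passage'' you invoke to rewrite it does not rescue this. Consequently the class you single out should not be the $a$--class: it is the \emph{identity} class that is exceptional, and it is exceptional precisely because $\id$ is not fixed point free, so your reference pair $(\widetilde f_1,\widetilde f_1)$ is everywhere coincident and contributes a generally nonzero self--coincidence term.

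Concretely, the paper writes $\omega_r(f_1',f_2)=\omega_r(f_1',f_1)-\omega_r(*,f_1)+\omega_r(*,f_2)$ using the additivity (\ref{equ:3.2}); the self--coincidence summand $\omega_r(\widetilde f_1',\widetilde f_1)$ lives entirely in the trivial Nielsen class (for a pair $(f,f)$ the map $\widetilde g$ is nullhomotopic), and the hypothesis of Case 2 is used exactly here: $\omega_r(\widetilde f_1',a^{\hochdot}\comp\widetilde f_1)=0$ gives $\omega_r(\widetilde f_1',\widetilde f_1)=\omega_r(\widetilde*,\widetilde f_1)-\omega_r(\widetilde*,a^{\hochdot}\comp\widetilde f_1)$, turning the essentiality of the trivial class into the twisted condition $\omega_r(\widetilde*,a^{\hochdot}\comp\widetilde f_1)\neq\omega_r(\widetilde*,\widetilde f_2)$, while the $k-1$ nontrivial classes share the untwisted one. (One also needs the geometric fact, proved in the paper via the ``parallel copies'' picture for the root pair $(*,f)$, that those $k-1$ classes are equally strong, plus the observation that $H_{\KoschScript{C}}(a^{\hochdot}\comp\widetilde f_1)=H_{\KoschScript{C}}(\widetilde f_1)$ so that only the $E^r[\coll\comp\,\cdot\,]$ component is twisted.) Your proposal never confronts $\omega_r(\widetilde f_1',\widetilde f_1)$ at all; as written it would force the trivial class of a pair $(f,f)$ to be inessential, which contradicts the self--coincidence phenomena of Proposition \ref{prop:1.12} and Example \ref{exa:1.21}. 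The final count $N_r=(k-1)\cdot[\text{untwisted}]+[\text{twisted}]$ happens to be the same under your swap, but the argument supporting it is missing its essential step.
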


In particular, the values which our Nielsen numbers may possibly
assume are severely restricted. In fact, only two or at most three
different values can occur:

\begin{prop}\label{prop:1.12}
  Let
  $\ 0 \leq r \leq \infty, \ m \geq 2 ,\ Y $
  and
  $\ k \; := \; \# \pi_1(Y) $,
  as well as
  $\ [f_1], [f_2] \; \in \; \pi_m(Y) \ $
  be as in theorem \ref{thm:1.11}.

  Then
  $\ N_r(f_1, f_2) \; \in \; \lbrace 0, 1, k \rbrace $.
  Furthermore, if
  $\ N_r(f_1, f_2) \; \notin \; \lbrace 0, k \rbrace $,
  then the following restrictions must all be satisfied:
  \begin{enumerate}[label=(\roman*)]
    \item
      $ n \ $
      is even and
      $\ m \geq n \geq 4 $,
      or else
      $\ m = 2 \ $
      and
      $\ Y = \RP(2) $; \hspace{3mm} and
    \item the manifold
      $\ Y \ $
      is closed, not orientable and not a product of two manifolds with strictly positive dimensions;
      $\ \pi_1(Y) \cong \Z_2; \; \chi(Y) \neq 0 $;
      $\ Y \ $
      admits no fixed point free selfmap.
      Also the homomorphism
      $\ i_* \; \colon \; \pi_m(Y \setminus \lbrace * \rbrace ) \; \longrightarrow \; \pi_m(Y) \ $
      (induced by the inclusion of
      $\ Y $,
      punctured at some point
      $\ * $)
      is not surjective.
      Moreover, the composed homomorphism
      \begin{equation*}
        E \comp \partial_{Y} \; \colon \; \pi_m(Y) \overset{\partial_{Y}}{\longrightarrow} \pi_{m-1}( S^{n-1} ) \overset{E}{\longrightarrow} \pi_m( S^n )
      \end{equation*}
      is nontrivial; here
      $\ \partial_{Y} \ $
      denotes the boundary homomorphism in the homotopy sequence of the tangent sphere bundle
      $\ \ST(Y) \ $
      over
      $\ Y $.
  \end{enumerate}
\end{prop}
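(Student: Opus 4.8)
The plan is to first establish the restrictions in (i)--(ii) under the hypothesis $\ N_r(f_1,f_2)\notin\lbrace0,k\rbrace\ $, and then to deduce $\ N_r(f_1,f_2)\in\lbrace0,1,k\rbrace\ $ formally. So assume $\ N_r(f_1,f_2)\notin\lbrace0,k\rbrace\ $. By Theorem~\ref{thm:1.11} we are neither in Case~\ref{thm:1.11:Case1} (where every Nielsen number is $\ 0\ $) nor in Case~\ref{thm:1.11:Case3} (where $\ k=1\ $ and $\ \lbrace0,1\rbrace=\lbrace0,k\rbrace\ $), so we are in Case~\ref{thm:1.11:Case2} with $\ k\geq2\ $, and $\ N_r(f_1,f_2)\in\lbrace0,1,k-1,k\rbrace\ $ forces $\ N_r(f_1,f_2)\in\lbrace1,k-1\rbrace\ $; hence exactly one of $\ (*_1),(*_{k-1})\ $ holds. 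In particular the two clauses common to $\ (*_1)\ $ and $\ (*_{k-1})\ $ are valid (in their $\ e^r\ $-- and $\ E^r\ $--versions when $\ r>0\ $): the Hopf--Ganea invariants of $\ \widetilde f_1\ $ and $\ \widetilde f_2\ $ agree, and $\ b\comp a^{\hochdot}\comp\widetilde f_1\not\sim b\comp\widetilde f_1\ $. Since $\ N_r>0\ $ we are not in Case~\ref{thm:1.11:Case1}, so $\ Y\ $ is compact --- hence closed --- and $\ m\geq n\ $. Moreover, $\ \widetilde Y\ $ is simply connected, hence orientable, so the collapsing map $\ b\comp\tau\ $ of \ref{equ:1.8} shifted by a covering transformation $\ \tau\ $ is homotopic to $\ b\ $ when $\ \tau\ $ preserves the orientation and to $\ r\comp b\ $ (with $\ r\ $ a degree $\ -1\ $ selfmap of $\ S^n\ $) otherwise; thus the clause $\ b\comp a^{\hochdot}\comp\widetilde f_1\not\sim b\comp\widetilde f_1\ $ can hold only if some covering transformation reverses the orientation of $\ \widetilde Y\ $. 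Therefore $\ Y\ $ is not orientable and $\ \pi_1(Y)\ $ has a subgroup $\ G_+\ $ of index $\ 2\ $.

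Next I would invoke the explicit descriptions of the bordism invariant $\ \omega_r(f_1,f_2)\ $ from Sections~\ref{sec:3}--\ref{sec:4}. Under the hypotheses just obtained the coincidence data of a generic pair splits, along the Nielsen decomposition \ref{equ:1.7}, into the classes indexed by $\ G_+\ $ and those indexed by its non--trivial coset, and the essential classes contributing to $\ \omega_r\ $ are governed, on the one hand, by whether the relevant homotopy classes $\ E^r[b\comp\tau\comp\widetilde f_i]\ $ differ, and on the other hand, for the ``diagonal'' class, by the self--coincidence index of a map $\ S^m\to Y\ $. That index is computed --- up to corrections by Hopf invariants of the difference of $\ \widetilde f_1\ $ and $\ \widetilde f_2\ $, a genuine correction because $\ b\comp\widetilde f_1\sim b\comp\widetilde f_2\ $ does not entail $\ f_1\simeq f_2\ $ --- from the boundary homomorphism $\ \partial_Y\ $ of the tangent sphere bundle $\ \ST(Y)\ $, and one reads off that an intermediate Nielsen number $\ N_r\in\lbrace1,k-1\rbrace\ $ can arise only if $\ \#G_+=1\ $ (so that $\ \pi_1(Y)\cong\Z_2\ $, $\ k=2\ $, and $\ (*_1),(*_{k-1})\ $ coincide) and only if the composite $\ E\comp\partial_Y\colon\pi_m(Y)\to\pi_{m-1}(S^{n-1})\to\pi_m(S^n)\ $ is non--trivial. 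Non--triviality of $\ E\comp\partial_Y\ $ forces $\ \partial_Y\ $ itself to be non--trivial, so $\ \ST(Y)\to Y\ $ admits no section; by the Poincaré--Hopf theorem $\ \chi(Y)\neq0\ $, whence $\ n\ $ is even (a closed manifold with non--zero Euler characteristic has even dimension). If $\ n=2\ $, a closed non--orientable surface with $\ \pi_1\cong\Z_2\ $ is $\ \RP(2)\ $, and then $\ \partial_Y\ $ lands in $\ \pi_{m-1}(S^1)\ $, which is $\ 0\ $ unless $\ m=2\ $; so $\ m=2\ $. For $\ n\geq4\ $ we already have $\ m\geq n\ $. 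This yields (i) together with ``$\ \pi_1(Y)\cong\Z_2\ $'', ``$\ Y\ $ closed'', ``$\ Y\ $ not orientable'', ``$\ \chi(Y)\neq0\ $'' and the non--triviality of $\ E\comp\partial_Y\ $ in (ii).

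Finally, still under $\ N_r(f_1,f_2)\notin\lbrace0,k\rbrace\ $: if $\ i_*\colon\pi_m(Y\setminus\lbrace*\rbrace)\to\pi_m(Y)\ $ were onto, the maps realising the essential class could be deformed to avoid $\ *\ $, compressing their coincidence data into a cell and forcing the index to vanish --- so $\ i_*\ $ is not onto. If $\ Y\ $ admitted a fixed point free selfmap $\ g\ $, then lifting $\ g\ $ to $\ \widetilde Y\ $ would let one empty every Nielsen class simultaneously, giving $\ \MCC(f_1,f_2)=0\ $ and hence $\ N_r=0\ $, a contradiction; so $\ Y\ $ admits none. And if $\ Y\ $ were a product of two manifolds of strictly positive dimension, the splitting of $\ TY\ $ and of the associated coincidence invariants would push $\ \omega_r(f_1,f_2)\ $ into the trivial class --- again a contradiction; so $\ Y\ $ is not such a product. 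This completes (i)--(ii). The assertion $\ N_r(f_1,f_2)\in\lbrace0,1,k\rbrace\ $ then follows at once: if $\ N_r(f_1,f_2)\in\lbrace0,k\rbrace\ $ there is nothing to prove, and otherwise (i)--(ii) give $\ \pi_1(Y)\cong\Z_2\ $, so $\ k=2\ $ and Theorem~\ref{thm:1.11} leaves only $\ N_r(f_1,f_2)\in\lbrace0,1,2\rbrace=\lbrace0,1,k\rbrace\ $.

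The main obstacle is the passage carried out in the second paragraph: correctly accounting for which Nielsen classes contribute and identifying the index of the ``diagonal'' one with an appropriate suspension of $\ \partial_Y([f])\ $ modulo Hopf--invariant corrections. This is exactly where the partial suspensions $\ e^r\ $ of \ref{equ:1.10} and the Hopf--Ganea invariant $\ H_{\KoschScript{C}}\ $ of \ref{equ:1.9} have to be unwound with care; the exclusion of product targets in the third paragraph is the other point that will require genuine work.
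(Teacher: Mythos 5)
Your reduction is the right one and matches the paper's in spirit: the $k-1$ non-trivial Nielsen classes stand or fall together, so an intermediate value of $N_r$ can only come from the ``diagonal'' class behaving differently, i.e.\ from a non-vanishing self-coincidence term $\omega_r(\widetilde f_1,\widetilde f_1)$ (condition (\ref{equ:4.2}) versus (\ref{equ:4.1})). But from that point on the proposal does not actually prove the restrictions; it asserts them. The paper's proof notes that $\omega_r(f_1,f_1)=E^r(\omega^{\#}(f_1,f_1))\neq 0$ forces the \emph{nonstabilized} invariant $\omega^{\#}(f_1,f_1)$ to be non-zero, and then quotes the already-established self-coincidence results (\cite{K4}, Proposition 1.3, and \cite{K6}, Theorem 1.32), which contain the entire list (i)--(ii). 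You instead try to rederive that list, and the decisive items are left as claims: that $\#G_+=1$, i.e.\ $\pi_1(Y)\cong\Z_2$, is ``read off''; the identification of the diagonal index with $E\comp\partial_Y$ ``modulo Hopf-invariant corrections'' is only sketched; and the exclusion of product targets is a one-sentence gesture. You flag the last two yourself as requiring genuine work. Note that the gap is not cosmetic: without $\pi_1(Y)\cong\Z_2$ you only get $N_r\in\lbrace 0,1,k-1,k\rbrace$, so even the first assertion $N_r\in\lbrace 0,1,k\rbrace$ is not established.

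Two concrete steps are also wrong or misdirected as written. First, ``if $Y$ admitted a fixed point free selfmap $g$ then \ldots\ $\MCC(f_1,f_2)=0$'' is false: a fixed point free selfmap of $Y$ lets one separate $g\comp f$ from $f$, not $f_1$ from an arbitrary $f_2$ (already $Y=S^1$ with rotations refutes your implication); the correct use of such a $g$ is to kill the self-coincidence invariant via the identity analogous to (\ref{equ:4.3}), which is what the cited references do. Second, the surjectivity argument for $i_*$ should likewise be run against the self-coincidence pair ($f$ factoring through the punctured $Y$, which carries a nowhere-zero vector field, makes $(f,f)$ loose), not against ``the maps realising the essential class.'' The parts of your argument that are sound and self-contained --- exclusion of Case 1, hence $Y$ closed and $m\geq n$; non-orientability via the degree of $b\comp a^{\hochdot}$ for a covering transformation --- are a nice elementary supplement to the paper's citation, but the proof as a whole is incomplete.
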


A typical example of a manifold $\ Y \ $ which may satisfy all these
restrictions is even dimensional real projective space $\ \RP(n) =
S^n \diagup \Z_2 \cdot a \ $, the orbit space of the antipodal
involution $\ a $. More generally, let us illustrate theorem
\ref{thm:1.11} by examples where $\ Y \ $ is an arbitrary spherical
space form $\ S^n \diagup G $. Here our criteria can be expressed in
terms of the Hopf--Hilton invariant homomorphisms
\begin{equation}\label{equ:1.13}
  h_j' \; \colon \; \pi_m(S^n) \; \longrightarrow \; \pi_m(S^{n+j (n-1)})\, , \quad j= 1, 2, \ldots
\end{equation}
which correspond to the basic Whitehead products
\begin{equation}\label{equ:1.14}
  w_j' \; := \; \left[ \iota_2 , \ldots , \left[ \iota_2, \left[ \iota_1, \iota_2 \right] \ldots \right] \, \right] \; \in \; \pi_{n+j (n-1)}( S^n \vee S^n )
\end{equation}
with one factor $\ \iota_1 \ $ and $\ j \ $ factors $\ \iota_2 \ $
(cf. \cite{H} and section \ref{sec:5} below).
Define\renewcommand{\theequation}{\arabic{section}.\arabic{equation}'}\addtocounter{equation}{-2}
\begin{equation}\label{equ:1.13Strich}
  h' \; := \; (h_1', h_2', \ldots) \; \colon \; \pi_m(S^n) \; \longrightarrow \; \underset{j \geq 1}{\oplus} \pi_m(S^{n+j (n-1)}
\end{equation}\numberwithin{equation}{section}\addtocounter{equation}{1}
and $\ h \; := \; (\id, h') \, $. Thus e.g.
\begin{equation*}
  E^r \comp h \; := \; (E^r, E^r \comp h_1', E^r \comp h_2', \ldots) \, .
\end{equation*}

\begin{thm}\label{thm:1.15}
  Let
  $\ G \ $
  be a finite group acting smoothly and freely on the sphere
  $\ S^n \ $
  and let
  $\ Y \; = \; S^n \diagup G \ $
  be the resulting orbit space.
  Assume
  $\ m, n \geq 2 \ $
  and
  $\ 0 \leq r \leq \infty \, $.
  Also let
  $\ \iota \; \in \; \pi_n(S^n) \ $
  be represented by the identity map
  $\ \id \, $.

  For all homotopy classes
  $\ [f_i] \; \in \; \pi_m (S^n \diagup G, y_0) \ $
  and their liftings
  $\ [\widetilde{f}_i] \; \in \; \pi_m (S^n, \widetilde{y}_0), \; i = 1,2 \,$,
  we have:\\
  If n is odd, then:
  \begin{equation*}
    N_r(f_1, f_2) \, = \,
    \begin{cases}
      \# G & \text{if } E^r \comp h([\widetilde{f}_1]) \; \neq \; E^r \comp h([\widetilde{f}_2]) \, ; \\
      0    & \text{otherwise} \, .
    \end{cases}
  \end{equation*}
  If n is even and
  $\ G = 0\, $,
  then:
  \begin{equation*}
    N_r(f_1, f_2) \, = \,
    \begin{cases}
      1  & \text{if } E^r \comp h([\widetilde{f}_1]) \; \neq \; E^r \comp h((-\iota) \comp [\widetilde{f}_2]) \, ; \\
      0  & \text{otherwise} \, .
    \end{cases}
  \end{equation*}
  If n is even and
  $\ G \cong \Z_2 \, $,
  then:
  \begin{equation*}
    N_r(f_1, f_2) \, = \,
    \begin{cases}
      2 & \text{if } E^r \comp h'[\widetilde{f}_1] \; \neq \; E^r \comp h'[\widetilde{f}_2]
      \text{ or } E^r[\widetilde{f}_1] \; \notin \left\lbrace E^r[\widetilde{f}_2], \, E^r((-\iota) \comp [\widetilde{f}_2])
      \right\rbrace \, ; \\
      1 & \text{if } E^r \comp h'[\widetilde{f}_1] \; = \; E^r \comp h'[\widetilde{f}_2]
      \text{ and } E^r[\widetilde{f}_1] \; \in \left\lbrace E^r[\widetilde{f}_2], \, E^r((-\iota) \comp [\widetilde{f}_2]) \right\rbrace \\
    &  \qquad\qquad\qquad\qquad\qquad\quad\ \text{ but } E^r[\widetilde{f}_2] \; \neq \; E^r((-\iota) \comp [\widetilde{f}_2]) \, ; \\
      0 & \text{if } E^r \comp h'[\widetilde{f}_1] \; = \; E^r \comp h'[\widetilde{f}_2]
      \text{ and } E^r[\widetilde{f}_1] \; = \; E^r[\widetilde{f}_2] \; = \; E^r((-\iota) \comp [\widetilde{f}_2]) \, .
    \end{cases}
  \end{equation*}

  In particular, all these Nielsen numbers depend only on the order
  $\ \# G \ $
  of the group
  $\ G \ $
  and not on the $G$--action itself.
\end{thm}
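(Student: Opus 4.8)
The idea is to read off everything from Theorem~\ref{thm:1.11} applied to the closed manifold $Y = S^n\diagup G$: its universal covering space is $\widetilde Y = S^n$ and its fundamental group is $G$, so $k = \#\pi_1(Y) = \#G$. First I would record the elementary features of this covering. Since $G$ acts freely, every nontrivial covering transformation $a\colon S^n\to S^n$ is fixed point free, so by the Lefschetz fixed point theorem $1+(-1)^n\deg a = 0$, i.e.\ $\deg a = (-1)^{n+1}$. As $[S^n,S^n]\cong\Z$ via the degree, a based representative $a^{\hochdot}$ is homotopic to $\id_{S^n}$ when $n$ is odd and to a representative of $-\iota$ when $n$ is even; hence $[a^{\hochdot}\comp\widetilde f]$ equals $[\widetilde f]$, resp.\ $(-\iota)\comp[\widetilde f]$, in $\pi_m(S^n)$. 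The same degree count forces $\#G\leq 2$ when $n$ is even (if $g_1, g_2\neq e$ then $\deg(g_1 g_2)=1$, so $g_1 g_2$ has a fixed point, hence $g_1 g_2 = e$), which is precisely why the three listed cases are exhaustive. Finally the collapsing map $b$ of \eqref{equ:1.8} has degree $1$, so $b\simeq\id_{S^n}$ and $[b\comp\widetilde f]=[\widetilde f]$; and $\alpha\mapsto(-\iota)\comp\alpha$ is an involution of $\pi_m(S^n)$ (two maps of degree $-1$ compose to degree $1$) which is compatible with the suspensions $E^r$.

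The one substantial ingredient is the identification, for $\widetilde Y = S^n$, of the Hopf--Ganea invariant $H_{\KoschScript{C}}$ of \eqref{equ:1.9} and its partial suspensions $e^r$ of \eqref{equ:1.10} with the classical Hopf--Hilton invariants $h_j'$ of \eqref{equ:1.13} and the ordinary suspensions $E^r$. Here $S^n\flat S^n$ is the homotopy fibre of $S^n\vee S^n\hookrightarrow S^n\times S^n$; from the homotopy exact sequence and Hilton's theorem, $\pi_m(S^n\flat S^n)$ is the direct sum of the homotopy groups of spheres attached to the basic products of weight $\geq 2$ in the generators $\iota_1$ (the cofibre) and $\iota_2$ (the base), the summand belonging to $w_j'$ of \eqref{equ:1.14} being $\pi_m(S^{n+j(n-1)})$. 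A comparison of $H_{\KoschScript{C}}$ with these basic products, carried out with the geometric description of $e^r$ from section~\ref{sec:2} and the computations of section~\ref{sec:5}, shows that for $\alpha,\beta\in\pi_m(S^n)$ one has $e^r\comp H_{\KoschScript{C}}(\alpha) = e^r\comp H_{\KoschScript{C}}(\beta)$ if and only if $E^r\comp h'(\alpha) = E^r\comp h'(\beta)$ (the contributions of the Hilton summands with more than one letter $\iota_1$ being controlled by the degree $[b\comp\widetilde f]$ and the $h_j'$). One also checks, from the naturality of the Whitehead products $w_j'$ under the degree $-1$ map, that $E^r\comp h'((-\iota)\comp\beta) = E^r\comp h'(\beta)$; this invariance is what makes the right-hand sides of the asserted formulas --- and the symmetry $N_r(f_1,f_2)=N_r(f_2,f_1)$ --- independent of the choices of liftings. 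Writing $h = (\id,h')$, the comparison datum attached to a lifting $\widetilde f$ thus becomes $E^r\comp h([\widetilde f])=\bigl(E^r[\widetilde f],\,E^r\comp h'([\widetilde f])\bigr)$.

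It then remains to substitute into Theorem~\ref{thm:1.11} and simplify the conditions $(*_i)$. When $n$ is odd, $a^{\hochdot}\simeq\id$, so $b\comp a^{\hochdot}\comp\widetilde f_1\simeq b\comp\widetilde f_1$; hence for $\#G\geq 2$ (Case~2) neither $(*_{k-1})$ nor $(*_1)$ can occur, leaving only $(*_k)$ --- which, by the preceding paragraph, holds precisely when $E^r\comp h([\widetilde f_1])\neq E^r\comp h([\widetilde f_2])$, then giving $N_r = k = \#G$ --- or $(*_0)$, giving $N_r = 0$; the subcase $\#G = 1$, $n$ odd, is the same computation run through Case~3. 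When $n$ is even and $G = 0$ we are in Case~3 with $a^{\hochdot}\simeq -\iota$, and after replacing $b\comp a^{\hochdot}\comp f_2$ by $(-\iota)\comp\widetilde f_2$ the criterion of that case is literally the stated one. When $n$ is even and $G\cong\Z_2$ (Case~2 with $k=2$) all four conditions $(*_2),(*_{k-1}),(*_1),(*_0)$ genuinely occur --- for $k=2$ the two conditions labelled $(*_{k-1})$ and $(*_1)$ are distinct but both yield $N_r = 1$ --- and substituting $b\comp a^{\hochdot}\comp\widetilde f_1\simeq(-\iota)\comp\widetilde f_1$, then using the involution $\alpha\mapsto(-\iota)\comp\alpha$ (so that $(-\iota)\comp(-\iota)\comp\beta=\beta$) to rewrite each condition symmetrically in $\widetilde f_1,\widetilde f_2$, reproduces the three-line formula. (The remaining possibility $2\leq m<n$ falls under Case~1, where every Nielsen number vanishes, consistent with $\pi_m(S^n)=0$.) The concluding assertion that $N_r$ depends only on $\#G$ is then immediate, since the formulas involve only the liftings $\widetilde f_i\colon S^m\to S^n$ and the integer $\#G$.

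The main obstacle I anticipate is the identification described in the second paragraph: making precise how the Hopf--Ganea invariant $H_{\KoschScript{C}}$ attached to the cofibration $\KoschScript{C}$ relates to the Hilton basic products $w_j'$, matching the geometric partial suspension $e^r$ with the ordinary suspension $E^r$ on each relevant Hilton summand, and tracking the effect of the degree $-1$ map --- all of which rests on sections~\ref{sec:2} and~\ref{sec:5}. By contrast, the bookkeeping with the conditions $(*_i)$ --- keeping the two distinct ``level~$1$'' conditions apart when $k=2$, and exploiting the involution $\alpha\mapsto(-\iota)\comp\alpha$ to symmetrise --- is routine, though it must be done with care.
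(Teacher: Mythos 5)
Your overall architecture coincides with the paper's: Section~\ref{sec:5} proves Theorem~\ref{thm:1.15} precisely by specializing the coincidence criteria from the proof of Theorem~\ref{thm:1.11} to $\widetilde Y=S^n$ (so that $b\sim\id$, $a^{\hochdot}$ represents $(-1)^{n+1}\cdot\iota$, and $\#G\le 2$ for $n$ even) and then translating the suspended Hopf--Ganea data into suspended Hopf--Hilton invariants. Your first and third paragraphs --- the Lefschetz/degree computations, the case analysis over the conditions $(*_i)$, and the symmetrization using that $\alpha\mapsto(-\iota)\comp\alpha$ is an involution compatible with $E^r$ --- track the paper faithfully.

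The gap is that the ``one substantial ingredient'' you isolate in your second paragraph \emph{is} the content of the paper's proof, and you assert it rather than derive it. The paper obtains it as follows: with $B$ a half-sphere, $\pinch_*(\widetilde\varphi)=(\iota_1+\iota_2)\comp\widetilde\varphi$, and Hilton's theorem~A expands $(\pinch_*-\iota_{2*}\comp p_{2*}\comp\pinch_*)(\widetilde\varphi)$ as $\iota_1\comp\widetilde\varphi+\sum_j w_j'\comp h_j'(\widetilde\varphi)+\sum_k w_k''\comp h_k''(\widetilde\varphi)$; by Theorem~\ref{thm:3.3} this term represents $j_*\kappa\,\omega^{\#}(\widetilde *,\widetilde\varphi)$, and by Theorem~\ref{thm:2.16}\ref{thm:2.16c} and Corollary~\ref{cor:2.17} the partial suspension $e^r$ (for $r\ge 1$) annihilates every basic product containing $\iota_1$ at least twice and merely suspends the single $\iota_1$ in each $w_j'$, so a second application of Hilton's theorem to $S^{n+r}\vee S^n$ gives $\omega_r(\widetilde *,\widetilde\varphi)=0$ if and only if $E^r\comp h(\widetilde\varphi)=0$. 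Without this chain your asserted equivalence between $e^r\comp H_{\KoschScript{C}}$ and $E^r\comp h'$ is unsupported; moreover, at $r=0$ it need not hold as literally stated, since $H_{\KoschScript{C}}$ also records the Hilton components $h_k''$ attached to basic products with two or more factors $\iota_1$, which $h'$ discards. The paper sidesteps this by never separating the Ganea condition from the degree condition: it works with the single invariant $\omega^{\#}(\widetilde *,\widetilde\varphi)$, which vanishes iff $\widetilde\varphi=0$ iff $h(\widetilde\varphi)=0$ (the first component of $h$ being the identity). Finally, the identity $h'((-\iota)\comp\beta)=h'(\beta)$ that your symmetrization needs is not mere naturality of Whitehead products; the paper derives it (Remark~\ref{rem:5.3}) from the selfcoincidence computation following~(\ref{equ:4.3}).
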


Note that $\ \#G \, \leq \, 2 \ $ when $\ n \ $ is even (as seen by
a simple argument involving Euler characteristics).

\begin{cor}\label{cor:1.16}
  {\bfseries(Case
  $\ \boldsymbol{r = 0} $).} Recall that
  $\ N_0 \, \equiv \, N^{\#} \ $
  (cf. \ref{equ:1.5}).

  Let
  $\ a \; \colon \; S^n \; \longrightarrow \; S^n \ $
  denote the antipodal map.

  \begin{enumerate}[label=(\roman*)]
    \item\label{cor:1.16-item1} Suppose that
      $\ n \ $
      is odd or
      $\ G \; = \; 0 $.
      Then
      \begin{equation*}
        N_0(f_1, f_2) \; = \;
        \begin{cases}
          \# G & \quad\text{if }\; \widetilde{f}_1 \; \not\sim \; a \comp \widetilde{f}_2 \, ; \\
          0    & \quad\text{if }\; \widetilde{f}_1 \; \sim \; a \comp \widetilde{f}_2 \, .
        \end{cases}
      \end{equation*}
      In particular, if
      $\ n \, \geq \, 3\ $
      is odd then
      $\ N_0(f_1, f_2) \ $
      takes the value
      $\ 0 \ $
      or
      $\ \# G \ $
      according as
      $\ \widetilde{f}_1 \ $
      is (freely) homotopic to
      $\ \widetilde{f}_2 \ $
      or not, resp..
    \item\label{cor:1.16-item2} Suppose that
      $\ n \ $
      is even and
      $\ G \cong \Z_2 $.
      Then
      \begin{equation*}
        N_0(f_1, f_2) \; = \;
        \begin{cases}
          2 & \text{if }\; \widetilde{f}_1 \; \not\sim \; \widetilde{f}_2 \text{ and } \widetilde{f}_1 \; \not\sim \; a \comp \widetilde{f}_2 \, ; \\
          1 & \text{if }\; \widetilde{f}_1 \; \sim \; \widetilde{f}_2 \text{ or } \widetilde{f}_1 \; \sim \; a \comp \widetilde{f}_2, \text{  but } \widetilde{f}_2 \; \not\sim \; a \comp \widetilde{f}_2 \, ; \\
          0 & \text{if }\; \widetilde{f}_1 \; \sim \; \widetilde{f}_2 \; \sim \; a \comp \widetilde{f}_2 \, .
        \end{cases}
      \end{equation*}
  \end{enumerate}
\end{cor}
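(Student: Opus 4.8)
The plan is to obtain the corollary simply by specialising Theorem~\ref{thm:1.15} to $r = 0$ and rephrasing its homotopy conditions; recall that $N_0 \equiv N^{\#}$. Three elementary reductions do most of the work. First, at $r = 0$ the partial suspension $e^0$ and the standard suspension $E^0$ are the identity maps of the relevant homotopy groups, so the invariants occurring are $h = (\id, h')$ and $h'$ themselves; since the first coordinate of $h$ is $\id$, for $\alpha, \beta \in \pi_m(S^n)$ one has $h(\alpha) = h(\beta)$ iff $\alpha = \beta$, and $h(\alpha) = h((-\iota)\comp\beta)$ iff $\alpha = (-\iota)\comp\beta$. Second, $S^n$ is simply connected, so for $m \geq 2$ free and based homotopy of maps $S^m \to S^n$ coincide; thus ``$\widetilde{f}_1 \sim \widetilde{f}_2$'' is the same as ``$[\widetilde{f}_1] = [\widetilde{f}_2]$ in $\pi_m(S^n)$''. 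Third, the antipodal map $a$ of $S^n$ has degree $(-1)^{n+1}$: for $n$ odd it lies in the connected group $SO(n+1)$, hence $a \sim \id$ and $[a \comp \widetilde{f}_2] = [\widetilde{f}_2]$; for $n$ even it is homotopic to a reflection, hence $[a \comp \widetilde{f}_2] = (-\iota)\comp[\widetilde{f}_2]$.

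Assertion~\ref{cor:1.16-item1} then follows by inspection. If $n$ is odd, Theorem~\ref{thm:1.15} gives $N_0(f_1, f_2) = \#G$ precisely when $h[\widetilde{f}_1] \neq h[\widetilde{f}_2]$, i.e.\ $[\widetilde f_1] \neq [\widetilde f_2]$, i.e.\ $\widetilde{f}_1 \not\sim \widetilde{f}_2$, which by the third reduction equals $\widetilde{f}_1 \not\sim a \comp \widetilde{f}_2$; the displayed ``in particular'' for $n \geq 3$ is merely a rewording. If $G = 0$ (so $\#G = 1$) --- which via the third reduction also takes care of even $n$ --- Theorem~\ref{thm:1.15} gives $N_0 = 1$ exactly when $h[\widetilde{f}_1] \neq h((-\iota)\comp[\widetilde{f}_2])$, i.e.\ $[\widetilde{f}_1] \neq (-\iota)\comp[\widetilde{f}_2]$, i.e.\ $\widetilde{f}_1 \not\sim a\comp\widetilde{f}_2$, and $N_0 = 0$ otherwise; for $n$ odd with $G = 0$ the two descriptions coincide since then $a \sim \id$.

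For assertion~\ref{cor:1.16-item2} (so $n$ even, $G \cong \Z_2$, $\#G = 2$) I would unwind the three alternatives of Theorem~\ref{thm:1.15} at $r = 0$ by the first reduction. In the value‑$0$ case $h'[\widetilde{f}_1] = h'[\widetilde{f}_2]$ is automatic once $[\widetilde{f}_1] = [\widetilde{f}_2]$, so that alternative reads $[\widetilde{f}_1] = [\widetilde{f}_2] = (-\iota)\comp[\widetilde{f}_2]$, i.e.\ $\widetilde{f}_1 \sim \widetilde{f}_2 \sim a\comp\widetilde{f}_2$, as claimed. In the value‑$2$ case the hypotheses of the corollary say $[\widetilde f_1] \notin \{[\widetilde f_2], (-\iota)\comp[\widetilde f_2]\}$, which already places us in the value‑$2$ alternative of Theorem~\ref{thm:1.15}. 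The one point that is not purely formal, and which I expect to be the main obstacle, arises in the value‑$1$ (middle) case: one must show that when $[\widetilde f_1] = (-\iota)\comp[\widetilde f_2]$ one nevertheless has $h'[\widetilde f_1] = h'[\widetilde f_2]$, i.e.\ the identity $h_j'((-\iota)\comp\beta) = h_j'(\beta)$ for $n$ even and all $j \geq 1$. To prove it I would realise $-\iota$ by a basepoint‑fixing reflection $\rho$ of $S^n$ that preserves an equatorial $S^{n-1}$, so that $\pinch\comp\rho \simeq (\rho'\vee\rho'')\comp\pinch$ with $\rho', \rho''$ of degree $-1$; applying $((-\iota)\vee(-\iota))_{*}$ to the Hilton--Milnor expansion $\pinch\comp\beta = \sum_{\omega} w_{\omega}\comp h_{\omega}(\beta)$ over basic products $\omega$ and using bilinearity of Whitehead products (a basic product of weight $w$ acquires the sign $(-1)^{w}$, and $w_j'$ has weight $j+1$) isolates the $w_j'$‑component; this gives the claim at once for $j$ odd and reduces it, for $j$ even, to the behaviour of $\pi_m$ of an even‑dimensional sphere under a degree $(-1)$ self‑map, which is exactly the bookkeeping already carried out in Section~\ref{sec:5} while proving Theorem~\ref{thm:1.15}. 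Once this identity is in hand, the value‑$2$ alternatives of Theorem~\ref{thm:1.15} and of the corollary coincide, the value‑$0$ ones coincide by the observation above, and hence by elimination so do the value‑$1$ ones --- so in the complementary situation (neither value $0$ nor value $2$) one gets $N_0 = 1$ exactly as stated.
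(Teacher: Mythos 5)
Your reductions at $r=0$, the whole of part \ref{cor:1.16-item1}, and the value--$0$ and value--$2$ directions of part \ref{cor:1.16-item2} are fine, and your overall strategy (specialise Theorem \ref{thm:1.11}/\ref{thm:1.15} to $r=0$) is the paper's. You also correctly isolate the crux: to reconcile the corollary with the statement of Theorem \ref{thm:1.15} one needs $h_j'((-\iota)\comp\beta)=h_j'(\beta)$ for $n$ even and all $j$. The gap is in your proof of that identity for $j$ even. The Hilton--Milnor sign bookkeeping you describe does give, for a basic product $w_\omega$ of weight $w$, that $((-\iota)\vee(-\iota))_*w_\omega=(-1)^{w}w_\omega$; but since $(-1)^{w}w_\omega=w_\omega\comp((-1)^{w}\iota_{|\omega|})$ and composition is not right--distributive, comparing coefficients only yields $h_j'((-\iota)\comp\beta)=((-1)^{j+1}\iota_{n+j(n-1)})\comp h_j'(\beta)$. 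For $j$ odd this is the claim; for $j$ even it leaves you with $(-\iota_k)\comp h_j'(\beta)=h_j'(\beta)$ for $k=n+j(n-1)$ even, and this is \emph{not} ``bookkeeping already carried out in Section \ref{sec:5}'': no such computation occurs there, and the statement $(-\iota_k)\comp\gamma=\gamma$ is false for general $\gamma\in\pi_m(S^k)$ with $k$ even (e.g.\ $\gamma=\iota_k$), since $(-\iota_k)\comp\gamma=-\gamma+[\iota_k,\iota_k]\comp H(\gamma)+\ldots\,$. The identity you need is true, but in the paper it is Remark \ref{rem:5.3}, and its proof is geometric, not formal: by the discussion following (\ref{equ:4.3}), the self--coincidence invariant $\omega_r(\widetilde{f}_1',\widetilde{f}_1)$ lies in the summand $\pi_m(S^n)$ of $\Ker_{m,n}(S^n)$, so its Hopf--Ganea component vanishes, whence $H_{\KoschScript{C}}(\widetilde{f}_1)=H_{\KoschScript{C}}(a^{\hochdot}\comp\widetilde{f}_1)$ and, via (\ref{equ:5.2}), $h'([f])=h'(((-1)^{n+1}\iota)\comp[f])$. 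Without importing that input your argument does not close.

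Note also that the paper's own derivation of the corollary sidesteps the issue entirely: rather than passing through the Hopf--Hilton criteria of Theorem \ref{thm:1.15}, it applies the criteria (\ref{equ:4.1}) and (\ref{equ:4.2.a}) directly at $r=0$, where additivity of $\omega^{\#}$ and the fact that $\omega^{\#}(\widetilde{*},\widetilde{\varphi})=0$ iff $\widetilde{\varphi}=0$ (its $\pi_m(S^n)$--component is $[\widetilde{\varphi}]$ itself, since $b\sim\id$) turn the two conditions into $[\widetilde{f}_1]\neq[\widetilde{f}_2]$ and $[\widetilde{f}_1]\neq(-\iota)\comp[\widetilde{f}_2]$ with no mention of $h'$. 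If you reroute your argument through (\ref{equ:4.1}) and (\ref{equ:4.2.a}) in this way, or explicitly invoke Remark \ref{rem:5.3}, the proof is complete.
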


\begin{cor}\label{cor:1.17}
  {\bfseries(Case
  $\ \boldsymbol{r \geq 1} $).}

  Here
  $\ E^r((-\iota) \comp [\widetilde{f}_2]) \; = \; - E^r([\widetilde{f}_2]) $.
  We get e.g. for
  $\ n \ $
  even,
  $\ G \cong \Z_2 $:
  \begin{equation*}
    N_r(f_1, f_2) \; = \;
      \begin{cases}
        2 & \text{if } E^r \comp h'(\widetilde{f}_1) \; \neq \; E^r \comp h'(\widetilde{f}_2) \text{ or } E^r(\widetilde{f}_1) \; \neq \; \pm E^r(\widetilde{f}_2) \,; \\
        1 & \text{if } E^r \comp h'(\widetilde{f}_1) \; = \; E^r \comp h'(\widetilde{f}_2) \text{ and } E^r(\widetilde{f}_1) \; = \; \pm E^r(\widetilde{f}_2) \text{ has order } > 2 \,; \\
        0 & \text{if } E^r \comp h'(\widetilde{f}_1) \; = \; E^r \comp h'(\widetilde{f}_2) \text{ and } E^r(\widetilde{f}_1) \; = \; \pm E^r(\widetilde{f}_2) \text{ has order } \leq 2 \,.
      \end{cases}
  \end{equation*}
\end{cor}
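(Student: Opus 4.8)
The plan is to deduce Corollary \ref{cor:1.17} straight from the third case ($n$ even, $\ G \cong \Z_2$) of Theorem \ref{thm:1.15}, once the opening identity $\ E^r((-\iota)\circ[\widetilde{f}_2]) = -E^r([\widetilde{f}_2])\ $ has been checked and the three case conditions have been rewritten.

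First I would prove that identity, valid for every $\ r \geq 1\ $ and with no parity hypothesis on $\ n\,$. This is the classical fact that on a sphere which is a genuine suspension, precomposition with a selfmap of degree $\ -1\ $ induces negation: for $\ r \geq 1\ $ the element $\ E^r([\widetilde{f}_2]) \in \pi_{m+r}(S^{n+r})\ $ is itself a suspension, so the distributive law $\ (\alpha_1 + \alpha_2)\circ E^r([\widetilde{f}_2]) = \alpha_1\circ E^r([\widetilde{f}_2]) + \alpha_2\circ E^r([\widetilde{f}_2])\ $ holds for all $\ \alpha_1, \alpha_2 \in \pi_{n+r}(S^{n+r})\,$; applying it to $\ \alpha_1 = \iota_{n+r},\ \alpha_2 = -\iota_{n+r}\ $ gives $\ (-\iota_{n+r})\circ E^r([\widetilde{f}_2]) = -E^r([\widetilde{f}_2])\,$. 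Since suspension is functorial, $\ E^r((-\iota)\circ[\widetilde{f}_2]) = (E^r(-\iota))\circ E^r([\widetilde{f}_2])\ $ and $\ E^r(-\iota)\ $ is again the degree $\ -1\ $ map $\ -\iota_{n+r}\,$, so the claim follows.

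Next I would substitute this into Theorem \ref{thm:1.15}. The two--element set $\ \{E^r[\widetilde{f}_2],\, E^r((-\iota)\circ[\widetilde{f}_2])\}\ $ becomes $\ \{\pm E^r[\widetilde{f}_2]\}\,$, so ``$\,E^r[\widetilde{f}_1] \notin \{\ldots\}\,$'' is exactly ``$\,E^r(\widetilde{f}_1) \neq \pm E^r(\widetilde{f}_2)\,$'', which is the line $\ N_r = 2\,$. In the remaining two lines one has $\ E^r\circ h'(\widetilde{f}_1) = E^r\circ h'(\widetilde{f}_2)\ $ and $\ E^r(\widetilde{f}_1) = \pm E^r(\widetilde{f}_2)\,$; in particular these two classes then have the same order. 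The discriminating clause ``$\,E^r[\widetilde{f}_2] \neq E^r((-\iota)\circ[\widetilde{f}_2])\,$'' reads ``$\,E^r(\widetilde{f}_2) \neq -E^r(\widetilde{f}_2)\,$'', i.e.\ $\ 2\,E^r(\widetilde{f}_2) \neq 0\,$, i.e.\ the common order of $\ E^r(\widetilde{f}_1) = \pm E^r(\widetilde{f}_2)\ $ is $\ > 2\,$ --- the line $\ N_r = 1\,$. Finally ``$\,E^r[\widetilde{f}_1] = E^r[\widetilde{f}_2] = E^r((-\iota)\circ[\widetilde{f}_2])\,$'' amounts to ``$\,E^r(\widetilde{f}_1) = E^r(\widetilde{f}_2)\ $ with $\ 2\,E^r(\widetilde{f}_2) = 0\,$'', equivalently ``$\,E^r(\widetilde{f}_1) = \pm E^r(\widetilde{f}_2)\ $ of order $\ \leq 2\,$'' (once the order is $\ \leq 2\ $ the sign is irrelevant) --- the line $\ N_r = 0\,$. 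Exhaustiveness and mutual exclusiveness of the three lines are inherited from Theorem \ref{thm:1.15}.

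There is essentially no obstacle here: the only genuine input is the suspension identity above, which is classical, and everything else is bookkeeping. The one point to watch is to read ``order $\ > 2\,$'' as including infinite order, and to notice that ``$\,E^r(\widetilde{f}_1) \in \{\pm E^r(\widetilde{f}_2)\}\ $ together with $\ 2E^r(\widetilde{f}_2) \neq 0\,$'' is genuinely equivalent to ``$\,E^r(\widetilde{f}_1) = \pm E^r(\widetilde{f}_2)\ $ of order $\ > 2\,$'', and likewise in the order $\ \leq 2\ $ case.
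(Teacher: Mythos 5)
Your proposal is correct and follows essentially the same route as the paper: Corollary \ref{cor:1.17} is obtained by substituting the identity $E^r((-\iota)\comp[\widetilde{f}_2]) = -E^r([\widetilde{f}_2])$ (a consequence of the left distributive law for composition with a suspension, valid since $r\geq 1$) into the third case of Theorem \ref{thm:1.15} and rewriting the three conditions in terms of $\pm E^r(\widetilde{f}_2)$ and its order. The paper treats this as immediate (``Theorem \ref{thm:1.15} and its corollaries \ref{cor:1.16} and \ref{cor:1.17} follow''), and your write-up just supplies the routine details.
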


Note that Hopf invariantes play no r\^ole here in our criteria for
the basic Nielsen number $\ N^{\#} \; = \; N_0 $. However they are
often decisive when $\ r \; \geq \; 1$.

\begin{exa}\label{exa:1.18}
  $ m\; = \; 3, \; n \; = \; 2 \ $
  and
  $\ G \; = \; 0$.

  For all maps
  $\ f_1, f_2 \; \colon \; S^3 \; \longrightarrow \; S^2 \ $
  and
  $\ 0 \; \leq \; r \; \leq \; \infty \ $
  we have:
  $\ N_r(f_1, f_2) \ $
  equals
  $\ 0 \ $
  or
  $\ 1 $,
  resp., according as
  $\ f_1 \; \sim \; f_2 \ $
  or
  $\ f_1 \; \not\sim \; f_2 $,
  resp.
  When
  $\ r \;\geq\; 1 \ $
  this is detected only by
  $\ E^r h' \; = \; E^r h'_1 \ $
  or, equivalently, by the classical Hopf invariant
  $\ H $.
  Indeed,
  \begin{equation*}
    h'(f_i) \; = \; \pm H(f_i) \comp \iota \; \in \; \pi_3(S^3) \cong \Z, \ \ i = 1,2 \, ,
  \end{equation*}
  (cf. \cite{W}, XI, 8.17)
  persists under all iterated suspensions; in contrast,
  $\ E^r(f_i) \ $
  measures only
  $\mod{2} \ $
  values in
  $\ \pi_{r+3}(S^{r+2}) \; \cong \; \Z_2 \ $
  when
  $\ r \; \geq \; 1 $.
  \qed
\end{exa}

\begin{cor}\label{cor:1.19}
  {\bfseries(Case
  $\ \boldsymbol{r = \infty} $).}

  For all
  $\ [f_1], [f_2] \; \in \; \pi_m(S^n), \, m,n \geq 2 $,
  we have:
  \begin{equation*}
    N_{\infty}(f_1, f_2) \; = \;
    \begin{cases}
      0 & \text{if } E^{\infty} \comp \gamma_j([f_1]) \; = \; E^{\infty} \comp \gamma_j(((-1)^{n+1} \cdot \iota) \comp [f_2]) \ \text{ for all } \ j \geq 1 \, ; \\
      1 & \text{otherwise.}
    \end{cases}
  \end{equation*}
  Here
  $\ E^{\infty} \comp \gamma_j \; \colon \; \pi_m(S^n) \; \longrightarrow \pi^S_{m-1-j(n-1)} \ $
  is defined by the infinitely suspended Hopf--James invariant,
  $\ j = 1,2, \ldots \; $.
\end{cor}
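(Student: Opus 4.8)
The plan is to read off Corollary~\ref{cor:1.19} from Theorem~\ref{thm:1.15} specialized to $\ Y=S^n\ $ (that is, $\ G=0\ $) and $\ r=\infty\ $, and then to perform one homotopy--theoretic translation. First observe that for $\ G=0\ $ the covering $\ \widetilde Y\to Y\ $ is trivial, so $\ \widetilde Y=Y=S^n\ $, $\ \widetilde f_i=f_i\ $, and $\ \#G=1\ $; moreover every fixed point free selfmap of $\ S^n\ $ is homotopic to the antipodal map $\ a\ $, which has degree $\ (-1)^{n+1}\ $, so that $\ [a\comp f_2]=((-1)^{n+1}\iota)\comp[f_2]\ $ (in agreement with Corollary~\ref{cor:1.16}). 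Plugging this into the ``$\,n$ odd$\,$'' branch of Theorem~\ref{thm:1.15} (where $\ (-1)^{n+1}=1\ $, hence $\ ((-1)^{n+1}\iota)\comp[f_2]=[f_2]\,$) and into the ``$\,n$ even, $G=0\,$'' branch (where $\ (-1)^{n+1}=-1\ $ and $\ (-\iota)\comp[f_2]\ $ occurs) yields, uniformly in the parity of $\ n\ $, that $\ N_\infty(f_1,f_2)=1\ $ when $\ E^\infty\comp h([f_1])\neq E^\infty\comp h(((-1)^{n+1}\iota)\comp[f_2])\ $ and $\ N_\infty(f_1,f_2)=0\ $ otherwise; here $\ h=(\id,h_1',h_2',\ldots)\ $ is the total Hopf--Hilton invariant of \ref{equ:1.13Strich}. (Equivalently, one could start from Case~3 of Theorem~\ref{thm:1.11} with $\ a\ $ the antipodal map.)

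It remains to show that the invariant $\ E^\infty\comp h\ $ separates exactly the same pairs of classes as the family $\ (E^\infty\comp\gamma_j)_{j\geq1}\ $. Graded by weight, $\ E^\infty\comp h=(E^\infty,\,E^\infty\comp h_1',\,E^\infty\comp h_2',\ldots)\ $ takes values in $\ \bigoplus_{j\geq1}\pi^S_{m-1-j(n-1)}\ $, which is precisely the range of the total infinitely suspended Hopf--James invariant $\ (E^\infty\comp\gamma_1,\,E^\infty\comp\gamma_2,\ldots)\ $: the weight--one term $\ E^\infty\comp\gamma_1\ $ is the ordinary suspension $\ E^\infty\ $, matching the $\ \id\ $--component of $\ h\ $, and for $\ j\geq2\ $ the targets of $\ E^\infty\comp h_{j-1}'\ $ and $\ E^\infty\comp\gamma_j\ $ coincide. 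I would then invoke the classical comparison between the Hopf invariants \`a la Hilton and \`a la James (compare \cite{H}, \cite{W} and section~\ref{sec:5}): after infinite suspension the weight--$j$ Hopf--Hilton invariant $\ E^\infty\comp h_{j-1}'(\alpha)\ $ is a triangular expression in $\ E^\infty\comp\gamma_1(\alpha),\ldots,E^\infty\comp\gamma_j(\alpha)\ $, with leading term $\ \pm E^\infty\comp\gamma_j(\alpha)\ $ and all lower contributions involving only Hopf--James invariants of strictly smaller weight. Hence $\ E^\infty\comp h(\alpha)\ $ is a function of the tuple $\ (E^\infty\comp\gamma_j(\alpha))_{j\geq1}\ $ alone, via a transformation that is bijective because it is triangular with invertible diagonal. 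Applying this with $\ \alpha=[f_1]\ $ and with $\ \alpha=((-1)^{n+1}\iota)\comp[f_2]\ $ converts the criterion of the first paragraph into the assertion of the Corollary: $\ N_\infty(f_1,f_2)=0\ $ iff $\ E^\infty\comp\gamma_j([f_1])=E^\infty\comp\gamma_j(((-1)^{n+1}\iota)\comp[f_2])\ $ for all $\ j\geq1\ $, and $\ =1\ $ otherwise (the quantifier being effectively finite, since $\ \pi^S_{m-1-j(n-1)}=0\ $ once $\ j(n-1)>m-1\ $).

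The main obstacle is exactly the Hilton $\leftrightarrow$ James dictionary used above: one must state the triangular comparison of the two (infinitely suspended) families of Hopf invariants carefully and check that the induced change of variables is genuinely bijective, so that ``$\,E^\infty\comp h\ $ agrees$\,$'' is literally equivalent to ``$\,E^\infty\comp\gamma_j\ $ agrees for every $\,j\,$''; the remaining ingredients (the degree of the antipodal map, the weight reindexing, the identification of $\ E^\infty\comp\gamma_1\ $ with the ordinary suspension, and the degenerate ranges $\ m<n\ $ or $\ j(n-1)>m-1\ $ where all invariants and hence $\ N_\infty\ $ vanish) are routine. A useful consistency check is Example~\ref{exa:1.18}: for $\ m=3,\ n=2\ $ the invariant $\ E^\infty\comp\gamma_1\ $ lands in $\ \pi^S_1\cong\Z_2\ $ (it is reduction modulo $2$), whereas $\ E^\infty\comp\gamma_2=\pm H\ $ is the classical integral Hopf invariant, which persists under all suspensions; the criterion then correctly records that $\ N_\infty(f_1,f_2)\ $ detects $\ f_1\not\sim f_2\ $ through $\ E^\infty\comp\gamma_2\ $ rather than through $\ E^\infty\comp\gamma_1\ $.
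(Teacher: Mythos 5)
Your proposal is correct and follows essentially the same route as the paper: Corollary \ref{cor:1.19} is obtained by specializing Theorem \ref{thm:1.15} to $Y=S^n$ (with $a$ the antipodal map of degree $(-1)^{n+1}$) and then translating the criterion on $E^{\infty}\comp h$ into one on the suspended Hopf--James invariants $E^{\infty}\comp\gamma_j$ via the classical Hilton--James comparison, for which the paper simply cites theorem 4.18 of Boardman--Steer. The only difference is that the paper also records an earlier, independent proof from \cite{K2}, \S 8, interpreting $E^{\infty}\comp\gamma_j$ through $(j-1)$--tuple selfintersections of framed immersions, which you do not need.
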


This was proved already in \cite{K2}, §8, by interpreting $\
E^{\infty} \comp \gamma_j \ $ via ($j-1$)--tuple selfintersections
of framed immersions.

\begin{cor}\label{cor:1.20}
  Let
  $\ Y = S^n \diagup G \ $
  and
  $\ m, n \geq 2 \ $
  be as in theorem \ref{thm:1.15}.
  Assume that
  $\ 2 \alpha \; = \; 0 \ $
  for all
  $\ \alpha \; \in \; \pi^S_{m-n} \ $
  (according to the tables in \cite{T} this holds e.g. when
  $\ m-n \, = \, 1, 2, 4, 5, 6, 8, 9, 12, 14, 16 \text{ or } 17 $).
  Then
  $\ N_{\infty}(f_1, f_2) \; \in \; \lbrace 0, \, \# G \rbrace \ $
  for all
  $\ [f_1], [f_2] \; \in \; \pi_m(S^n \diagup G) $.
\end{cor}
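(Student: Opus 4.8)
The plan is to reduce everything to Theorem~\ref{thm:1.15} and to observe that the hypothesis on $\pi^S_{m-n}$ forbids precisely the one value of $N_\infty$ strictly between $0$ and $\#G$.

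First I would dispose of the cases in which no hypothesis is needed. If $n$ is odd, Theorem~\ref{thm:1.15} already asserts $N_r(f_1,f_2)\in\{0,\#G\}$ for every $r$, so there is nothing to prove. If $n$ is even then $\#G\le 2$ by the Euler-characteristic remark following Theorem~\ref{thm:1.15}; when moreover $G=0$ we have $\#G=1$ and the corresponding formula in Theorem~\ref{thm:1.15} produces only the values $0$ and $1=\#G$. Hence the entire content of the corollary is concentrated in the remaining case: $n$ even and $G\cong\Z_2$, where $\#G=2$ and a priori $N_\infty$ could be $0$, $1$ or $2$.

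The key step is then to rule out $N_\infty(f_1,f_2)=1$ in that case. By the formula of Theorem~\ref{thm:1.15} for $n$ even, $G\cong\Z_2$ (applied with $r=\infty$), the value $1$ can occur only if, in particular, $E^\infty[\widetilde f_2]\neq E^\infty\bigl((-\iota)\comp[\widetilde f_2]\bigr)$. Here I would invoke the elementary identity $E^r\bigl((-\iota)\comp[\widetilde f_2]\bigr)=-E^r[\widetilde f_2]$, valid for every $r\ge 1$ and already recorded in Corollary~\ref{cor:1.17}: it holds because $E$ is functorial and a degree $-1$ self-map of a sphere is homotopic to a reflection in the suspension coordinate, so that post-composition with it becomes negation after a single suspension. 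Consequently the inequality needed for $N_\infty=1$ reads $E^\infty[\widetilde f_2]\neq -E^\infty[\widetilde f_2]$, i.e.\ $2\cdot E^\infty[\widetilde f_2]\neq 0$ in the stable stem $\pi^S_{m-n}$ (note that $[\widetilde f_2]\in\pi_m(S^n)$ suspends exactly into this group). This contradicts the standing assumption $2\alpha=0$ for all $\alpha\in\pi^S_{m-n}$. Hence $N_\infty(f_1,f_2)\in\{0,2\}=\{0,\#G\}$, and the corollary follows.

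I do not expect a genuine obstacle here: once Theorem~\ref{thm:1.15} and the suspension identity for degree $-1$ maps are in hand, the statement is pure bookkeeping. The only points demanding a little care are the verification that the three cases of Theorem~\ref{thm:1.15} are exhaustive (which needs $\#G\le 2$ for even $n$) and the matching of indices, namely that the group controlling the torsion hypothesis is exactly the $(m-n)$-stem into which $[\widetilde f_2]$ stabilizes. The parenthetical list $m-n=1,2,4,5,6,8,9,12,14,16,17$ is then read off directly from the tables in \cite{T} and is irrelevant to the logical argument.
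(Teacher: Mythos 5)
Your argument is correct. The reduction to the case $n$ even, $G\cong\Z_2$ is sound (the other two cases of Theorem \ref{thm:1.15} only ever produce the values $0$ and $\#G$), the condition singled out from the $N_r=1$ clause is the right one, and the identity $E^{\infty}\bigl((-\iota)\comp[\widetilde f_2]\bigr)=-E^{\infty}[\widetilde f_2]$ together with $2\,\pi^S_{m-n}=0$ does exclude the intermediate value. However, your route is genuinely different from the paper's. You argue entirely at the level of the finished formula of Theorem \ref{thm:1.15}: a case-check plus the stable negation identity. The paper instead returns to the mechanism behind Proposition \ref{prop:1.12}: the only way $N_r$ can avoid $\lbrace 0,\#G\rbrace$ is for the stabilized self-coincidence invariant $\omega_{\infty}(\widetilde f_1,\widetilde f_1)$ to be nonzero, and this invariant is computed via the Hopf-invariant decomposition of \cite{K2}, Theorem 1.14 --- the components $h_j$ with $j\geq 2$ vanish automatically in a self-coincidence situation (the class lies in the subgroup $\Omega^{fr}_{m-n}$ of $\Omega^{fr}_{m-n}(\Omega Y)$), while the $j=1$ component equals $\widetilde{deg}_1(\widetilde f_1)\pm\widetilde{deg}_1(\widetilde f_1)$, i.e.\ zero or twice an element of $\pi^S_{m-n}$. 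Your version is shorter and purely bookkeeping once Theorem \ref{thm:1.15} is available; the paper's version identifies the unique obstruction ($2\cdot\widetilde{deg}_1$) directly on the bordism invariant, which is what makes the subsequent remark transparent --- namely that the hypothesis cannot be dropped, as witnessed by desuspensions of elements of order greater than $2$ in $\pi^S_{n-1}$. Both proofs ultimately rest on the same factor of $2$ dying in $\pi^S_{m-n}$.
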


Hopf--James invariants occur not only in the criteria which
determine Nielsen numbers (as e.g in corollary \ref{cor:1.19}).
They play also an important r\^ole --- via EHP--sequences --- in the computations needed to decide whether these criteria are fulfilled.\\

We are particularly interested on those settings in theorem
\ref{thm:1.15} where the Nielsen numbers can possibly assume three
distinct values (compare proposition \ref{prop:1.12}). Thus let $\ n
\, \geq \, 2 \ $ be even and $\ G \, \cong \, \Z_2 $. In several
important cases we are able to carry out our 2--step program and
compute all Nielsen numbers as well as the minimum number $\MCC$.

\begin{exa}\label{exa:1.21}
  {\bfseries$ \boldsymbol{\ m\; \leq \; 2n - 1; \; n \; \geq \; 2 \ \text{even} } $.}

  Given maps
  $\ f_1, f_2 \, \colon \, S^m \longrightarrow S^n \diagup \Z_2 \,$,
  all Nielsen numbers agree,
  $ \ N_r(f_1, f_2) \, = \, N^{\#}(f_1, f_2) \ $
  for
  $\ 0 \, \leq \, r \, \leq \, \infty \, $,
  and are determined by corollary \ref{cor:1.16}\ref{cor:1.16-item2}
  or, equivalently, by corollary \ref{cor:1.17}.

  If
  $\ m \, \leq \, 2n -3 \, $,
  then:
  \begin{equation*}
    \MCC(f_1, f_2) \, = \, N^{\#}(f_1, f_2) \, .
  \end{equation*}

  If
  $\ m \, = \, 2n - 2 \, $,
  then:
  $\ \MCC(f_1, f_2) \, = \, N^{\#}(f_1, f_2) \ $
  {\bfseries except} precisely if
  $\ n \, = \, 16, 32, 64 \ $
  (or maybe
  $\ 128 \ $)
  and
  $\ f_1 \, \sim \, f_2 \, =: \, f \ $,
  and
  $\ N^{\#}(f, f) \, = \, 0 \,$,
  but the lifting
  $\ \widetilde{f} \, \colon \, S^{2n - 2} \, \longrightarrow \, S^n \ $
  of
  $\ f\ $
  has a nontrivial {\bfseries Kervaire invariant}
  $\ K(\widetilde{f}) \, = \, 1 \, $.

  If
  $\ m \, = \, 2n - 1 \, $,
  then
  $\ \MCC(f_1, f_2) \, = \, N^{\#}(f_1, f_2) \ $
  {\bfseries except} precisely if
  $\ n \, \equiv \, 2(4), \; n \, \geq \, 6 \,$,
  and
  $\ f_1 \, \sim \, f_2 \, =: \, f \ $
  and
  $\ N^{\#}(f, f) \, = \, 0 \,$
  and the {\bfseries Hopf invariant} of the lifting
  $\ \widetilde{f} \, \colon \, S^{2n - 1} \, \longrightarrow \, S^n \ $
  of
  $\ f\ $
  is not divisible by
  $\ 4 \, $.
  \qed
\end{exa}

\begin{exa}\label{exa:1.22}
  {\bfseries$ \boldsymbol{\ m\; \leq \; n + 3; \; n \; \geq \; 2 \ \text{even} } $.}

  Given maps
  $\ f_1, f_2 \, \colon \, S^m \, \longrightarrow\, S^n \diagup \Z_2 \ $
  and
  $\ 0 \, \leq \, r \, \leq \, \infty \, $,
  we have
  \begin{equation*}
    \MCC(f_1, f_2) \, = N_0(f_1, f_2) \, = \, \cdots \, = \, N_r(f_1, f_2) \, = \, \cdots \, = \, N_{\infty}(f_1, f_2) \, .
  \end{equation*}
  I.e. the Nielsen numbers do not depend on
  $\ r \ $
  and agree with the minimum number
  $\ \MCC \,$;
  they are determined by corollary \ref{cor:1.16}\ref{cor:1.16-item2}
  or, equivalently, by corollary \ref{cor:1.17}.
  \qed
\end{exa}
\vspace{5mm}

When the domain and target manifolds of $\ f_1, f_2 \ $ have the
same dimension $\ m \, = \, n \ $ (e.g. in fixed point theory) and
also in examples \ref{exa:1.21} and \ref{exa:1.22}\, our Nielsen
numbers are independent of $\ r $. However, they can be quite
distinct in general (and loose strength, but gain in computability
as $\ r \ $ increases).

Given $\ Y, \; r, \; m \ $ and $\ X \, = \, S^m \ $ as in theorem
\ref{thm:1.11}, a standard stability argument shows that $\ N_r \,
\equiv \,
  N_{r+1} \, \equiv \,
  \ldots \, \equiv \,
  N_{\infty} \quad \text{ for }\ r \, \geq \, m - 2n +2 \, $;
thus the number of possibly different Nielsen number functions $\
N_r\ $ is limited by the so--called "degree of instability" $\ m -2n
+3 \ $ (cf. \cite{KR}, 1.12). For distinguishing them we define
\begin{equation}\label{equ:1.23}
  \#^i_r(m, Y) \, := \, \# \left\lbrace
    \left( [f_1], [f_2] \right) \, \in \,
    \left( \pi_m(Y) \right)^2 \; \vert \; N_r(f_1,f_2)
  \, = \, i \, \right\rbrace
  \  \in \  \left\lbrace 0, 1, 2, \ldots, \infty \right\rbrace
\end{equation}
for $\ i \, = \, 0, 1, \ldots \ $. These cardinalities sum up to the
square of $\ \# \pi_m(Y) \ $ and vanish when $\ i \, \notin \,
\left\lbrace \, 0,\, 1,\, k \, := \, \# \pi_1(Y) \right\rbrace \ $
(cf. proposition \ref{prop:1.12}). Clearly, $\ \#^0_r(m, Y) \, \leq
\, \#^0_{r+1}(m,Y)\ $ and $\ \#^k_r(m, Y) \, \geq \, \#^k_{r+1}(m,
Y) \, $.

\begin{prop}\label{prop:1.24}
  Let
  $\ m, \; Y \, = \, S^n \diagup G \ $
  and
  $\ r \ $
  be as in theorem \ref{thm:1.15}.
  Then the following conditions are equivalent:
  \begin{enumerate}[label=(\roman*)]
    \item\label{prop:1.24:1}
      $ N_r \, \equiv \, N_{r+1} \ $
      (i.e.
      $\ N_r(f_1, f_2) \, = \, N_{r+1}(f_1,f_2) \ $
      for all
      $\ f_1, f_2 \, \colon \, S^m \longrightarrow Y \, $) ;
    \item\label{prop:1.24:2}
      $ \#\Ker( E^r \comp h ) \, = \, \#\Ker( E^{r+1} \comp h ) \ $ (cf. \ref{equ:1.13}ff);
    \item\label{prop:1.24:3}
      $ \#^0_r(m, Y) \, = \, \#^0_{r+1}(m, Y) \, $;
    \item\label{prop:1.24:4}
      $ \#^i_r(m, Y) \, = \, \#^i_{r+1}(m, Y) \, $
      for all
      $\ i \, = \, 0, 1, \ldots \, $;
  \end{enumerate}
\end{prop}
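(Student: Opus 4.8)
The plan is to use Theorem~\ref{thm:1.15} to translate the whole statement into combinatorics of the single (in general non-additive) map $E^r\comp h$ on $\pi_m(Y)\cong\pi_m(S^n)$ together with the effect of one more suspension. Write $\sigma$ for that further (componentwise, hence additive) suspension, so that $E^{r+1}\comp h=\sigma\circ(E^r\comp h)$.

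First I would dispose of the degenerate cases: for $m<n$ everything vanishes, and for $m=n$ the Hopf--Hilton invariants $h'_j$ vanish for dimension reasons while $E^r$ is an isomorphism on $\pi_n(S^n)=\Z$, so all four conditions hold. Thereafter $m>n$, so $\pi_m(S^n)$ is finite, except when $m=2n-1$ and $n$ is even, where it splits as $\Z\oplus(\text{finite})$ with the free part detected by $h'_1$; in that exceptional case $\Ker(E^r\comp h)$ is still finite, and the counting arguments below are unaffected. The cheap implications are (i)$\Rightarrow$(iv) (equal functions have equal level sets) and (iv)$\Rightarrow$(iii) (take $i=0$). Since every coincidence of $r$-fold suspensions survives the additive map $\sigma$, each criterion of Theorem~\ref{thm:1.15} pinning down $N_r$ is implied by its $(r+1)$-version; hence $N_{r+1}\le N_r$ pointwise, $\Ker(E^r\comp h)\subseteq\Ker(E^{r+1}\comp h)$, $\#^0_r\le\#^0_{r+1}$ and $\#^k_r\ge\#^k_{r+1}$.

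Next, (iii)$\Rightarrow$(i)$\Rightarrow$(ii). When $n$ is odd or $G=0$, Theorem~\ref{thm:1.15} (compare Corollaries~\ref{cor:1.16} and~\ref{cor:1.17}) says $N_r(f_1,f_2)=0$ exactly when $E^r\comp h$ identifies $[\widetilde f_1]$ with $(\pm\iota)\comp[\widetilde f_2]$. Writing $\mathrm{fib}_r(c)$ for the point-inverses of $E^r\comp h$ and using that $\mathrm{fib}_{r+1}$ is obtained by amalgamating the $\mathrm{fib}_r$'s along $\sigma$, the elementary inequality $(\sum a_j)^2\ge\sum a_j^2$ gives $\#^0_{r+1}\ge\#^0_r$, with equality if and only if $\sigma$ is injective on $\mathrm{im}(E^r\comp h)$; and this last condition is precisely $N_r\equiv N_{r+1}$, i.e. (i) (the automorphism $(\pm\iota)\comp(-)$ of $\pi_m(S^n)$ merely reindexes the count). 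For $G\cong\Z_2$ one runs the same argument but must also carry the extra clause ``$E^r[\widetilde f_2]=E^r((-\iota)\comp[\widetilde f_2])$'' — a monotone ``$2$-torsion'' count — and keep track of the stratum $\{N_r=1\}$. Finally (i)$\Rightarrow$(ii): injectivity of $\sigma$ on $\mathrm{im}(E^r\comp h)$ forces $\Ker\sigma\cap\mathrm{im}(E^r\comp h)=\{0\}$, i.e. $\Ker(E^r\comp h)=\Ker(E^{r+1}\comp h)$, whence (ii).

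The main obstacle is closing the loop with (ii)$\Rightarrow$(i). Equality of cardinalities $\#\Ker(E^r\comp h)=\#\Ker(E^{r+1}\comp h)$, together with the nestedness and finiteness already noted, yields only $\Ker\sigma\cap\mathrm{im}(E^r\comp h)=\{0\}$; and because the Hopf--Hilton invariants are not additive before stabilization, $\mathrm{im}(E^r\comp h)$ is merely a subset of a homotopy group, not a subgroup, so this is strictly weaker than the injectivity of $\sigma$ on $\mathrm{im}(E^r\comp h)$ that (i) requires — in concrete terms, one must pass from ``$\mathrm{im}(E^r\comp h)$ meets $\Ker\sigma$ only in $0$'' to ``the difference set $\mathrm{im}(E^r\comp h)-\mathrm{im}(E^r\comp h)$ meets $\Ker\sigma$ only in $0$''. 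The plan is to extract from the partial-suspension homomorphisms $e^r$ of Section~\ref{sec:2} and the Hilton decomposition of Section~\ref{sec:5} the fact that the failure of additivity of $h'_j$ is ``triangular'' — carried by $E^r$ and by the $h'_i$ with $i<j$ — and to conclude that on the finite group $\pi_m(S^n)$ the relevant fibres of $E^r\comp h$ all have the common size $\#\Ker(E^r\comp h)$, which exactly bridges that gap. With this in hand (ii)$\Rightarrow$(i) follows, and the same structural input handles the remaining bookkeeping in the case $n$ even, $G\cong\Z_2$ (the ``$2$-torsion'' count and the stratum $\{N_r=1\}$), completing the equivalences. I expect this last structural step to be where essentially all the difficulty lies.
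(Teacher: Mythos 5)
Your outline founders on the very point you flag as ``the main obstacle'': you assert that the Hopf--Hilton invariants are not additive before stabilization, so that $\operatorname{Im}(E^r\comp h)$ is only a subset of a homotopy group and the implication \ref{prop:1.24:2}$\Rightarrow$\ref{prop:1.24:1} needs an extra ``structural step'' about a triangular failure of additivity --- a step you then do not carry out. In fact each $h_j'$ \emph{is} a homomorphism: it is the component, under Hilton's direct-sum decomposition of $\pi_m(S^n\vee S^n)$, of the map $[\widetilde\varphi]\mapsto(\iota_1+\iota_2)\comp[\widetilde\varphi]$, and post-composition with a \emph{fixed} map is the induced homomorphism of that map on $\pi_m$ (it is the \emph{left} distributive law --- varying the outer factor of a composition --- that fails and produces Hopf-invariant correction terms; that failure is irrelevant here). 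This is why (\ref{equ:1.13}) speaks of the Hopf--Hilton invariant \emph{homomorphisms}. Granting additivity, the step you are missing is immediate and is exactly how the paper argues: by lemma \ref{lem:6.1} the nested kernels $\Ker(E^r\comp h)\subseteq\Ker(E^{r+1}\comp h)$ are finite subgroups, so equal cardinality forces equality of kernels, whence $E$ is injective on the \emph{subgroup} $\operatorname{Im}(E^r\comp h)$ and the criteria of theorem \ref{thm:1.15} for $N_r$ and $N_{r+1}$ agree pointwise (also in the case $n$ even, $G\cong\Z_2$, where the extra clause $E^r[\widetilde f_2]=E^r((-\iota)\comp[\widetilde f_2])$ is preserved for the same reason). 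As written, your proof is therefore incomplete at its central implication, and the completion you propose is aimed at a non-existent difficulty.

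For the remaining direction the paper is also more economical than your fibre-amalgamation count: if \ref{prop:1.24:2} fails, lemma \ref{lem:6.1} makes $\pi_m(S^n)$ finite and yields $[\widetilde f]$ with $E^r\comp h([\widetilde f])\neq 0=E^{r+1}\comp h([\widetilde f])$, so that $N_r(p\comp\widetilde f,0)\neq 0=N_{r+1}(p\comp\widetilde f,0)$ by theorem \ref{thm:1.15}; since $\lbrace N_r=0\rbrace\subseteq\lbrace N_{r+1}=0\rbrace$ always and these sets are finite, this single pair gives $\#^0_r(m,Y)<\#^0_{r+1}(m,Y)$, i.e.\ $\neg$\ref{prop:1.24:3}. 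Together with \ref{prop:1.24:2}$\Rightarrow$\ref{prop:1.24:1}$\Rightarrow$\ref{prop:1.24:4}$\Rightarrow$\ref{prop:1.24:3} this closes the cycle with no analysis of how the fibres of $E^r\comp h$ merge under a further suspension.
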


In particular, when comparing Nielsen number functions $\ N_r,\,
N_{r'},\, 0 \ \leq \ r,\, r' \ \leq \ \infty \, $, it suffices to
count how often they vanish.

\begin{exa}\label{exa:1.25}
  {\bfseries $ \boldsymbol{m \, = \, 16, \; Y \, = \, S^6 \diagup \Z_2 }. $}
  Here the Nielsen numbers
  $\ N_r \ $
  determine five distinct functions on pairs
  $\ f_1, f_2 \, \colon \, S^{16} \, \longrightarrow \,S^6 \diagup \Z_2 \ $
  of maps:
  \begin{equation*}
    \MC \, \equiv \,
    \MCC \, \equiv \,
      N_0 \, \not\equiv \,
      N_1 \, \equiv \,
      N_2 \, \not\equiv \,
      N_3 \, \not\equiv \,
      N_4 \, \not\equiv \,
      N_5 \, \equiv \,
      N_6 \, \equiv \,
      N_{\infty}
  \end{equation*}
  (stability arguments would not allow more than seven distinct such functions anyway).
  The precise value distributions are given by Table \ref{tab:1.26}.

  Moreover there are precisely four {\itshape"loose"} pairs
  $\ ([f_1], [f_2]) \in \pi_{16}(S^6 \diagup \Z_2 )^2 \ $
  (i.e.
  $\ \MCC(f_1, f_2) \, = \, 0 \ $
  or, equivalently,
  $\ f_1 \ $
  and
  $\ f_2 \ $
  can be deformed away from one another);
  they have the form
  $\ ([f_1], [f_2]) \, = \, ([p \comp \widetilde{f}], [p \comp \widetilde{f}]) \ $
  where
  $\ p \ $
  denotes the projection and
  $\ [\widetilde{f}] \ $
  lies in the subgroup
  \begin{equation*}
    \Z_2(4 \nu_6 \comp \sigma_9) \, \oplus \, \Z_2(\eta_6 \comp \mu_7) \ \ \subset \ \ \pi_{16}(S^6) \, \cong \, \Z_8 \, \oplus \, \Z_2 \, \oplus \, \Z_9
  \end{equation*}
  (compare \cite{T}, theorem 7.3).
  \qed
\end{exa}

\addtocounter{table}{25}
\begin{table}[H]
  \centering
  \begin{tabular}{c|c|c|c|c|c}
    $ r $ & $ 0 $ & $ 1, 2 $ & $ 3 $ & $ 4 $ & $ r \geq 5 $\\ \toprule
    $ \#\Ker(E^r \comp h) $ & 1 & 3 & 6 & 12 & 24\\ \midrule
    $ \#_r^0(16, S^6 \diagup \Z_2) $ & 4 & 36 & 144 & 576 & 1152\\ \hline
    $ \#_r^1(16, S^6 \diagup \Z_2) $ & 280 & 792 & 1440 & 2304 & 4608\\ \hline
    $ \#_r^2(16, S^6 \diagup \Z_2) $ & 20452 & 19908 & 19152 & 17856 & 14976\\\midrule
    $ \#_r^0(16, S^6) $ & 144 & 432 & 864 & 1728 & 3456\\ \hline
  \end{tabular}
\caption{The value distributions of the Nielsen numbers $ \ N_r \ $
for pairs of maps from $\ S^{16} \ $ to $\ S^6 \diagup \Z_2 \ $ and
to $\ S^6\, $. Here the suspension $ E^r \comp h \,  = \, (E^r, 0 )
\, \colon \, \pi_{16}(S^6) \, \longrightarrow \, \pi_{r+16}(S^{r+6})
$ is equally relevant in both cases.} \label{tab:1.26}
\end{table}\addtocounter{equation}{1}

\begin{exa}\label{exa:1.27}
  {\bfseries $ \boldsymbol{m \, = \, 16, \; Y \, = \, S^6 }. $}
  Here there are again five distinct Nielsen numbers (as in example \ref{exa:1.25}):
  \begin{equation*}
    \MC \, \equiv \,
    \MCC \, \equiv \,
      N_0 \, ; \
      N_1 \, \equiv \
      N_2 \, ; \
      N_3 \, ; \
      N_4 \  \text{ and }\
      N_5 \, \equiv \, \ldots \, \equiv \,
      N_{\infty} \, .
  \end{equation*}
  They take only the values
  $\ i \, = \, 0 \ $
  and
  $\ 1 \, $.
  The precise value distribution is given in Table \ref{tab:1.26}.
  (Note that
  \begin{equation*}
    \#^0_r(16, S^6) \, + \, \#^1_r(16, S^6) \, = \, (\# \pi_{16} (S^6) )^2 \, = \, 20736 \,).
  \end{equation*}

  Moreover precisely the
  $\ 144 \ $
  pairs of the form
  $\ (\, [\widetilde{f}]\, ,\, - [\widetilde{f}]\,), \  [\widetilde{f}] \, \in \, \pi_{16}(S^6) \,$
  , are loose;
  unless
  $\ 2 [\widetilde{f}] \, = \, 0 \ $
  they do not project to loose pairs in
  $\ S^6 \diagup \Z_2 \ $
  (compare example \ref{exa:1.25}).
  \qed
\end{exa}

\vspace{1ex} Finally let us come back to the central objects of
study in topological coincidence theory: the minimum numbers MC and
MCC of coincidence points and of coincidence pathcomponents, resp.
(cf. \ref{equ:1.2}, \ref{equ:1.3}, and compare \cite{B}, p.9). What
can we say about them for maps into spherical space forms, once the
Nielsen numbers -- or at least $\ N_0 \ $ -- are understood?

\begin{thm}\label{thm:1.28}
  Let the finite group
  $\ G \ $
  act smoothly and freely on
  $\ S^n \ $
  and consider (basepoint preserving) maps
  \begin{equation*}
    f_1, f_2 \; \colon \; S^m \, \longrightarrow \, Y \, = \, S^n \diagup G, \ \ m, n \, \geq \, 1 \, .
  \end{equation*}

  When
  $\ Y \, \cong \, S^n \ $
  then
  $\ \MCC \, \equiv \, N_0 \ $
  (i.e.,
  $\ \MCC(f_1, f_2) \, = \, N_0(f_1, f_2) \ $
  for all
  $\ f_1, f_2 \, $).

  When
  $\ \#G \, \geq \, 2 \ $
  then
  $\ \MCC \, \equiv \, N_0 \ $
  if and only if the {\upshape'Wecken condition'}
  \begin{equation}\label{equ:1.29}
    0 \, = \, \partial \lt \pi_m \lt S^n \rt \rt \, \cap \, \Ker \lt E \, \colon \, \pi_{m-1} \lt S^{n-1} \rt \, \longrightarrow \, \pi_m \lt S^n \rt \rt
  \end{equation}
  holds; here
  $\ \partial \, := \, \partial_{S^n} \ $
  denotes the boundary homomorphism in the exact homotopy sequence of the tangent sphere bundle
  $\ \ST(S^n) \ $
  fibred over
  $\ S^n \ $
  (as in proposition \ref{prop:1.12}).
\end{thm}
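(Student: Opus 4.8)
The plan is to compare $\MCC$ with the completely explicit $N_0 = N^{\#}$ of Corollary~\ref{cor:1.16} by translating looseness into a lifting problem over the universal covering. Since $S^m$ is simply connected, fixing a lift $\widetilde f_1'$ of $f_1'$ identifies the Nielsen decomposition \ref{equ:1.7} with
\begin{equation*}
  C(f_1',f_2') \;=\; \coprod_{g\in G} C\bigl(\widetilde f_1',\, g\comp\widetilde f_2'\bigr),
\end{equation*}
homotopies of $(f_1,f_2)$ being exactly homotopies of $(\widetilde f_1,\widetilde f_2)$; hence $\MCC(f_1,f_2) = \min\sum_{g\in G}\#\pi_0\,C(\widetilde f_1',g\comp\widetilde f_2')\ \ge\ N_0(f_1,f_2)$ (each contributing class is non-empty in every pair). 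Moreover $(f_1,f_2)$ is loose precisely when $(\widetilde f_1,\widetilde f_2)\colon S^m\to S^n\times S^n$ can be deformed into the complement $W := (S^n\times S^n)\setminus\bigcup_{g\in G}\Delta_g$ of the twisted diagonals $\Delta_g := \{(gz,z):z\in S^n\}$, which — as the domain is a sphere — means $([\widetilde f_1],[\widetilde f_2])$ lies in the image of $\pi_m(W)\to\pi_m(S^n\times S^n) = \pi_m(S^n)^2$. Throughout one may assume $n\ge 2$ and $m\ge n$ (for $n\le 1$, or $m<n$, the statement is classical or vacuous). I will compute this image and match it against Corollary~\ref{cor:1.16}.

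Suppose first $\#G = 1$, so $Y\cong S^n$ and $N_0\in\{0,1\}$. Here $W = (S^n\times S^n)\setminus\Delta$ is a bundle over $S^n$ (first projection) with contractible fibre $S^n\setminus\{*\}$, hence $W\simeq S^n$, a homotopy inverse of $\pr_1$ being the section $x\mapsto(x,ax)$ with $a$ the antipodal map. Thus the image of $\pi_m(W)\to\pi_m(S^n)^2$ equals $\{(\beta,a_*\beta):\beta\in\pi_m(S^n)\}$, the anti-diagonal if $n$ is even ($a_* = -\id$) and the diagonal if $n$ is odd ($a_* = \id$); by Corollary~\ref{cor:1.16}\ref{cor:1.16-item1} this is exactly the set of pairs with $N_0 = 0$. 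Hence $\MCC(f_1,f_2) = 0\iff N_0(f_1,f_2) = 0$. If $N_0 = 1$, a generic pair has a single Nielsen class, and since $n\ge 2$ its coincidence set has codimension $\ge 2$, so its components can be joined by disjoint embedded arcs and merged to one by the standard move (the data $(\widetilde g,\bar g_0)$ of \ref{equ:1.6} extend over a handle glued along an arc, there being no $\pi_0$-obstruction within one class). So $\MCC\equiv N_0$ unconditionally.

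Now let $\#G\ge 2$. If $n$ is odd, $\ST(S^n)\to S^n$ admits a section, so $\partial := \partial_{S^n}$ vanishes on all of $\pi_m(S^n)$ and \ref{equ:1.29} holds automatically; and by Corollary~\ref{cor:1.16}\ref{cor:1.16-item1}, $N_0 = 0$ forces $\widetilde f_1\sim\widetilde f_2$, whence the classes with $g\ne e$ are $\widetilde f_2^{-1}(\operatorname{Fix}(g)) = \varnothing$ and the self-coincidence class $C(\widetilde f_2,\widetilde f_2)$ is removable because a nowhere-zero section of $\widetilde f_2^*TS^n$ exists (obstruction $\partial[\widetilde f_2] = 0$), while $N_0 = \#G$ forces every class to contribute, each to be merged to one component; so $\MCC\equiv N_0$ unconditionally. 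Suppose finally $n$ even and $G\cong\Z_2 = \{e,a\}$, with $a$ the (now fixed point free) antipodal map. The geometric key is that stereographic projection from $x$ sends $ax$ to the origin of $T_xS^n$, so $W = (S^n\times S^n)\setminus(\Delta_e\cup\Delta_a)$ is the total space of $TS^n$ with its zero section removed, hence $W\simeq\ST(S^n)$; since the second projection $\ST(S^n)\to S^n$ kills the fibre and, on the image of the bundle projection, agrees up to homotopy with the bundle projection (shrink the geodesic push-off distance to zero), the image of $\pi_m(W)\to\pi_m(S^n)^2$ equals $\{(\gamma,\gamma):\gamma\in\Ker\partial\}$. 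On the other hand Corollary~\ref{cor:1.16}\ref{cor:1.16-item2} says $N_0 = 0\iff [\widetilde f_1] = [\widetilde f_2]$ with $2[\widetilde f_2] = 0$, $N_0 = 1\iff [\widetilde f_1]\in\{\pm[\widetilde f_2]\}$ but $2[\widetilde f_2]\ne 0$, and $N_0 = 2$ otherwise. When $N_0 = 2$ both classes contribute, hence are non-empty, and one merges each; when $N_0 = 1$ the homotopy bringing $\widetilde f_1$ to $\widetilde f_2$ (resp.\ to $a\comp\widetilde f_2$) makes the other class equal $\widetilde f_2^{-1}(\operatorname{Fix}(a)) = \varnothing$, leaving one contributing self-coincidence class, merged to one component. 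So $\MCC\equiv N_0$ can fail only through pairs with $N_0 = 0$, for which it holds iff every element of order $\le 2$ in $\pi_m(S^n)$ lies in $\Ker\partial$ — and $\Ker\partial$ is always contained in the $2$-torsion, since $\partial[\widetilde f_2] = 0$ lets the vector-field flow deform $\widetilde f_2$ into $a\comp\widetilde f_2 = (-\iota)\comp\widetilde f_2$. A standard computation with the exact sequence of $\ST(S^n)$ and the identity expressing $E\comp\partial$ through the Euler number of $S^n$ and a Whitehead product of the form $[\iota_n,\iota_n]\comp H$ (so that the obstruction surfaces in $\Ker E$, in the spirit of Corollary~\ref{cor:1.19}) turns ``$\Ker\partial$ equals the $2$-torsion'' into $\partial(\pi_m(S^n))\cap\Ker(E\colon\pi_{m-1}(S^{n-1})\to\pi_m(S^n)) = 0$, i.e.\ into \ref{equ:1.29}; this is consistent with Example~\ref{exa:1.25}, where the displayed subgroup of loose classes is precisely $\Ker\partial_{S^6}$.

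I expect the main obstacles to be, first, the two ``unconditional'' geometric assertions above — that a single contributing Nielsen class on $S^m$ can be merged to one component, and that a non-contributing self-coincidence class can be removed — i.e.\ that the coincidence data $(\widetilde g,\bar g_0)$ genuinely extends over the relevant handles; and second, the homotopy-theoretic translation, through the James--Hopf invariant and the EHP sequence, of ``$\Ker\partial_{S^n}$ equals the $2$-torsion'' into the kernel condition \ref{equ:1.29}. The residual low-dimensional anomalies ($n = 2$, i.e.\ $Y = \RP(2)$, and $n = 1$) are settled by direct inspection, while the cases $\#G = 1$ and ($n$ odd, $\#G\ge 2$) are obstruction-free by the above.
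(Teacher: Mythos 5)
Your route is genuinely different from the paper's: the paper disposes of the case $\#G\ge 2$ by citing [K6], Corollary 1.20, and handles $Y=S^n$ by the additivity \ref{equ:3.2} of $\omega^{\#}$ (reducing to a root problem $(f,y_0)$ with $\MCC\le 1$), whereas you analyse looseness directly through the complements of the twisted diagonals and the identifications $(S^n\times S^n)\setminus\Delta\simeq S^n$ and $(S^n\times S^n)\setminus(\Delta_e\cup\Delta_a)\simeq \ST(S^n)$. That framework is sound and the computation of $\operatorname{Im}\bigl(\pi_m(W)\to\pi_m(S^n)^2\bigr)$ as the diagonal of $\Ker\partial$ is correct. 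But the step that actually produces condition \ref{equ:1.29} is missing, and the substitute you offer is wrong. By Corollary \ref{cor:1.16}\ref{cor:1.16-item2} the diagonal pairs with $N_0=0$ are those $\gamma=[\widetilde f_2]$ with $\gamma=(-\iota)\comp\gamma$, \emph{not} those with $2\gamma=0$: one has $(-\iota)\comp\gamma=-\gamma+[\iota_n,\iota_n]\comp H(\gamma)+\ldots$, and the Whitehead--product correction cannot be discarded (it is exactly why a mod~$4$ Hopf invariant, rather than a $2$--torsion condition, governs $m=2n-1$ in Example \ref{exa:1.21}; e.g.\ $\gamma=[\iota_n,\iota_n]$ has infinite order yet satisfies $(-\iota)\comp\gamma=\gamma$). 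For the same reason your claim ``$\Ker\partial$ is contained in the $2$--torsion'' only yields $\Ker\partial\subset\operatorname{Fix}((-\iota)_*)$. What your argument actually needs is the identity $E\comp\partial=\pm(\operatorname{id}-(-\iota)_*)$ on $\pi_m(S^n)$, which converts ``every $N_0=0$ pair is loose'', i.e.\ $\operatorname{Fix}((-\iota)_*)=\Ker\partial$, into $\Ker(E\comp\partial)=\Ker\partial$ and hence into $\partial(\pi_m(S^n))\cap\Ker E=0$. You allude to ``a standard computation'' here but never state or establish this identity; since it is the entire content of the ``if and only if'', the proof of the second half of the theorem is not complete.

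A second, smaller gap: all your upper bounds $\MCC\le N_0$ for $N_0\ge 1$ (merging each essential Nielsen class to a single path component, and keeping inessential classes empty) rest on a Wecken-type merging move that you flag as an ``obstacle'' but neither prove nor cite; this is a genuine theorem (the generalized Wecken theorem of [K2], Thm.~1.10, resp.\ [K6], Thm.~1.19) and should at least be invoked explicitly. Likewise the boundary cases $n=1$ and $m<n$ are deferred to ``direct inspection'' (the paper uses [K2], Thm.~1.13 for $S^1$). With the identity $E\comp\partial=\pm(\operatorname{id}-(-\iota)_*)$ supplied and the merging lemma cited, your argument would go through and would constitute an attractive self-contained alternative to the paper's reliance on [K6].
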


Clearly condition \ref{equ:1.29} is satisfied when $\ n \ $ is odd
or $\ n \, = \, 2 \ $ or $\ m \, < \, n\ $ or in the "stable range"
$\ m \, < \, 2n - 2 \, $. But it can already fail to hold when $\
m\, = \, 2n - 2 \ $ or $\ m\, = \, 2n - 1 \, $. This explains the
appearance of the Kervaire invariant and of the $\ \mod{4}\ $ Hopf
invariant in the criteria in example \ref{exa:1.21}. For information
concerning the next six nonstable dimension settings see \cite{KR}.
For the many geometric consequences of (a possible failure of)
condition \ref{equ:1.29} see e.g. \cite{K6}, corollary 1.21.

Once the minimum number $\ \MCC\ $ of coincidence {\itshape
pathcomponents} is determined what about the minimum number $\ \MC\
$ of coincidence {\itshape points}? The answer for spherical space
forms involves Hopf invariants in a decisive way.

\begin{thm}\label{thm:1.30}
  Given maps
  $\ f_1, f_2 \; \colon\; S^m \, \longrightarrow \, Y \, = \, S^n \diagup G\, , \; m, n \, \geq\, 1\, $,
  as in theorem \ref{thm:1.28}, we have:
  \begin{enumerate}[label=(\roman*)]
    \item\label{thm:1.30-item1} If
      $\ \MC(f_1, f_2) \, < \, \infty \,$,
      then
      $\ \MC(f_1, f_2) \, = \, \MCC(f_1, f_2) \,$.
    \item\label{thm:1.30-item2} If
      $\ n \, = \, 1 \ $
      or
      $\ m \, < \, n \,$,
      then
      $\ \MC(f_1, f_2) \, = \, \MCC(f_1, f_2) \, < \, \infty \,$.
  \end{enumerate}
  If
  $\ m, n \, \geq \, 2 \,$,
  then
  \begin{equation*}
    \MC(f_1, f_2) \, < \, \infty \; \Longleftrightarrow \; [\widetilde{f}] \, \in \,
    \begin{cases}
      E(\Ker(\underline{h}'))   & \text{if } \#G \, \geq \, 3 \,; \\
      E(\pi_{m-1}(S^{n-1})) & \text{if } \#G \, \leq \, 2 \,. \\
    \end{cases}
  \end{equation*}
  Here
  $\ [\widetilde{f}] \; := \; [\widetilde{f}_1] - [\widetilde{f}_2] \, \in \, \pi_m(S^n) \ $
  where
  $\ [\widetilde{f}_i] \ $
  is obtained by lifting
  $\ [f_i] \, \in \, \pi_m(Y,y_0) , \; i\,=\,1,2 \,$;
  \renewcommand{\theequation}{\arabic{section}.\arabic{equation}'}\addtocounter{equation}{-1}
  \begin{equation}\label{equ:1.30Strich}
  \underline{h}' \; \colon \; \pi_{m-1}(S^{n-1}) \; \longrightarrow \; \pi_{m-1}(S^{2n-3}) \oplus \pi_{m-1}(S^{3n-5})^2 \oplus \pi_{m-1}(S^{4n-7})^3 \oplus \ldots
  \end{equation}
  \numberwithin{equation}{section}is the {\upshape total} Hopf--Hilton homomorphism (which involves {\upshape all} basic Whitehead products and not just a selection as in (\ref{equ:1.13Strich}); note also the different dimensions here).
\end{thm}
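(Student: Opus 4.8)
The plan is to analyse the statement through the \emph{Nielsen decomposition}. For $m\geq 2$ each $f_i$ lifts to $\widetilde f_i\colon S^m\to S^n$, and for a generic pair the coincidence set splits as $C=\coprod_{g\in G}C(\widetilde f_1,g\circ\widetilde f_2)$, the summands being the Nielsen classes; the number of coincidence points (resp.\ of path components) is the sum of the corresponding numbers over $g\in G$. Two facts will be used repeatedly: the tautological inequality $\MC\geq\MCC$; and the fact that a Nielsen class which is \emph{removable} (can be emptied by some homotopy) can already be emptied by a homotopy supported near it, so that distinct Nielsen classes are independent problems with respect to emptying moves.

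\textit{Parts \ref{thm:1.30-item1} and \ref{thm:1.30-item2}.} Part \ref{thm:1.30-item1} will follow from the class structure alone. If $\MC=N<\infty$, fix a representative with exactly $N$ isolated coincidence points. Two isolated coincidence points in the same Nielsen class can be coalesced into one (drag one along an arc to the other; for $m=1$ this is classical), so minimality forces each class to hold at most one of the $N$ points, and each such class must be unremovable, for otherwise emptying it by a localised homotopy would leave $N-1$ points. An unremovable class has coincidences in every representative, hence $\#\pi_0(C(f_1',f_2'))\geq N$ always and $\MCC\geq N=\MC\geq\MCC$. For \ref{thm:1.30-item2}: if $m<n$ a generic coincidence set is empty; if $n=1$ then $Y=S^1$ and both maps are null-homotopic when $m\geq 2$, while the case $m=n=1$ is the classical circle case. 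In all of these $\MC<\infty$, and \ref{thm:1.30-item1} gives equality.

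\textit{The finiteness criterion ($m,n\geq2$).} The geometric question is whether the generically $(m-n)$-dimensional coincidence manifold of each Nielsen class can be compressed to dimension $0$. Via the Pontryagin--Thom construction I would encode the class $C(\widetilde f_1,g\circ\widetilde f_2)$ by an element of $\pi_m(\mathrm{Th}(TS^n))$: the relevant twisted diagonal $\{(x,gx):x\in S^n\}\subset S^n\times S^n$ has normal bundle $TS^n$, and I would analyse $\pi_m(\mathrm{Th}(TS^n))$ through the exact homotopy sequence of the tangent sphere bundle $\ST(S^n)$ and the cofibration $\ST(S^n)\to S^n\to\mathrm{Th}(TS^n)$; the composites of the suspension $E$ with the boundary map are governed by the Euler number $\chi(S^n)=1+(-1)^n$, which is exactly why a free action with $\#G\geq3$ forces $n$ odd (the Euler-characteristic argument already used in the paper). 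For the diagonal class ($g=e$), an isolated coincidence point carries a local index in $\pi_{m-1}(S^{n-1})$ whose suspension equals $\pm\bigl([\widetilde f_1]-[\widetilde f_2]\bigr)$, and conversely a prescribed such index is realised by finite point data; hence the diagonal class is finite-able precisely when $[\widetilde f]\in E(\pi_{m-1}(S^{n-1}))$. When $\#G\leq2$ the only other class is orientation-reversing; there the coincidence index is defined only up to the $(-1)$-action, so this class can always be compressed to at most one point and imposes no further condition, which gives the stated criterion for $\#G\leq2$ (including $G=0$, and $n$ even, where the $\chi=2$ contribution is absorbed into the normal framing).

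\textit{The case $\#G\geq3$ and the main obstacle.} When $\#G\geq3$ (hence $n$ odd) the several twisted diagonals $\{(x,x)\}$, $\{(x,gx)\}$, $\{(x,g^2x)\}$, \dots\ must all be made $0$-dimensional by the \emph{same} pair of maps, and the compatibility of the point data across these classes is a genuinely higher constraint. I would track it via the Hilton--Milnor decomposition of $\pi_{m-1}$ of a wedge of $(n-1)$-spheres: the $j$-fold interactions among the graphs correspond to the basic Whitehead products of weight $j$ --- of which there are $j-1$, in dimension $(n-1)+(j-1)(n-2)$ --- and the obstruction to simultaneous $0$-dimensional realisation is precisely the vanishing of the total Hopf--Hilton homomorphism $\underline h'$ (with the dimensions of (\ref{equ:1.30Strich})) on a desuspension $\alpha$ of $[\widetilde f]$; equivalently $[\widetilde f]\in E(\Ker\underline h')$. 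The ``if'' direction is then an explicit construction of the point data from an element of $\Ker\underline h'$, and the ``only if'' direction reads the $\underline h'$-components off a finite configuration. The hard part is exactly this matching: showing that the total obstruction to compressing all $\#G$ classes simultaneously is \emph{precisely} $\underline h'$, no more and no less, and that $\#G\geq3$ forces \emph{every} higher basic-product term while $\#G\leq2$ forces \emph{none} beyond weight $1$; this requires careful EHP / Hilton--Milnor bookkeeping, together with the twisted-compression analysis for the orientation-reversing classes. Once the finiteness criterion is established, the precise value of $\MC$ --- where the Kervaire invariant and the $\bmod 4$ Hopf invariant enter --- is read off from $\MCC$ by part \ref{thm:1.30-item1} together with Theorems \ref{thm:1.28} and \ref{thm:1.15}.
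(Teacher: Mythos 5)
There are genuine gaps. The most serious one is in the finiteness criterion, which is the heart of the theorem: your sketch sets up twisted diagonals, local indices in $\pi_{m-1}(S^{n-1})$ and a Hilton--Milnor bookkeeping scheme, but you yourself flag that ``the hard part is exactly this matching'' of the simultaneous compression obstruction with $\underline{h}'$ --- and that step is never carried out. In particular you give no actual argument for why the obstruction for $\#G\geq 3$ is \emph{exactly} $E(\Ker(\underline{h}'))$ while for $\#G\leq 2$ the other (orientation-reversing) class ``imposes no further condition''; the remark that its index is ``defined only up to the $(-1)$-action'' is not a proof. The paper avoids analysing all $\#G$ classes simultaneously: it reduces the general pair to the single root problem $([f_1]-[f_2],\,y_0)$ via the inequality $\left|\MC(f_1,f_2)-\MC([f_1]-[f_2],y_0)\right|\leq \MC(f_2,f_2)\leq 1$ (from [K3], Prop.~6.2 and [K6], Thm.~1.19), and then quotes [K3], Corollary 6.10, where the $E(\Ker(\underline{h}'))$ versus $E(\pi_{m-1}(S^{n-1}))$ dichotomy is established. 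If you want a self-contained proof you must either reprove that corollary or make the reduction to the root case explicit; as written, the crucial implication in both directions is missing.

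Part \ref{thm:1.30-item1} also rests on two unjustified Wecken-type moves: (a) that two isolated coincidence points in the same Nielsen class can always be coalesced, and (b) that a class which is empty in \emph{some} representative can be emptied by a homotopy supported near it without disturbing the other classes. Both are exactly the kind of statements that fail or become delicate in low dimensions --- note that the paper treats $n=2$ separately, using the vanishing of the index group $\pi_{m-1}(S^1)$ for $m>2$ and citing Jezierski's surface theorem for $m=n=2$ --- and (b) is a localisation claim that you assert but never prove; it does not follow from the bordism-theoretic notion of essentiality used to define $N_0$ in this paper. The paper's actual route to \ref{thm:1.30-item1} for $m,n\geq 3$ is quite different and much shorter: from $\MC\leq\#G$ (via [K3], Thm.~1.2), $\MC\neq\MCC$ would force $N_0<\#G$, hence by Corollary \ref{cor:1.16} $f_1\sim f_2$ or $f_1\sim a\comp f_2$, hence $\MC=\MCC\leq 1$ by the selfcoincidence bound of [K6], Thm.~1.19 --- a contradiction. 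I would recommend replacing your coalescing argument by this indirect one, or at least supplying proofs (with the correct dimension hypotheses) of the two moves you invoke.
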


Since Hopf invariants vanish on suspended maps, we have the
inclusions
\begin{equation}
  E^2(\pi_{m-2}(S^{n-2})) \, \subset \, E(\Ker(\underline{h}')) \, \subset \, E(\pi_{m-1}(S^{n-1})) \, \subset \, \Ker h' \, \subset \, \pi_m(S^n)
\end{equation}
at least when $\ n \, \geq \, 3 \ $ (cf. \ref{equ:1.13Strich} and
\ref{equ:1.30Strich}).

\begin{cor}\label{cor:1.32}
  Assume
  $\ n \, \geq\, 2 \,$.
  \begin{enumerate}[label=(\roman*)]
    \item\label{cor:1.32-item1} If
      $\ \MC(f_1, f_2) \, < \, \infty \ $
      then
      $\ h'([\widetilde{f}_1]) \, = \, h'([\widetilde{f}_2]) \,$.

      If
      $\ n \ $
      is even and
      $\ m \, \leq \, 3n - 4 \,$,
      then:
      \begin{equation*}
        \MC(f_1, f_2) \, < \, \infty \; \Longleftrightarrow \; h'([\widetilde{f}_1]) \, = \, h'([\widetilde{f}_2]) \,.
      \end{equation*}
      \item\label{cor:1.32-item2} Now assume that
    $\ n \ $
    is odd. Then:
    \begin{equation*}
      \MC(f_1, f_2) \, = \, \MCC(f_1, f_2) \, = \,
      \begin{cases}
        \#G     & \text{if } \widetilde{f}_1 \, \not\sim \, \widetilde{f}_2 \,; \\
        0       & \text{if } \widetilde{f}_1 \, \sim \, \widetilde{f}_2 \,; \\
      \end{cases}
    \end{equation*}
    {\bfseries provided}
    $\ [\widetilde{f}_1] - [\widetilde{f}_2] \ $
    lies in
    $\ E(\pi_{m-1}(S^{n-1})) \ $
    when
    $\ \#G \, \leq \, 2 \ $
    or in
    $\ E(\Ker(\underline{h}')) \ $
    when
    $\ \#G \, \geq \, 3 \,$.
    If this condition fails to hold, then
    $\ \MCC(f_1, f_2) \, = \, \#G \ $
    but
    $\ \MC(f_1, f_2) \ $
    is infinite.
  \end{enumerate}
\end{cor}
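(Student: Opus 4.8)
The plan is to deduce Corollary~\ref{cor:1.32} directly from Theorems~\ref{thm:1.28} and~\ref{thm:1.30}, Corollary~\ref{cor:1.16}, the chain of inclusions displayed just before the statement, and one short computation of $\Ker h'$ in the metastable range; no new geometry is needed.

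\emph{Part~\ref{cor:1.32-item1}.} Write $[\widetilde{f}] := [\widetilde{f}_1] - [\widetilde{f}_2]$. If $\MC(f_1,f_2) < \infty$, then by Theorem~\ref{thm:1.30} the class $[\widetilde{f}]$ lies in $E(\Ker(\underline{h}'))$ (if $\#G \geq 3$) or in $E(\pi_{m-1}(S^{n-1}))$ (if $\#G \leq 2$); in either case $[\widetilde{f}] \in E(\pi_{m-1}(S^{n-1})) \subset \Ker h'$ by the cited inclusions (and trivially so for $n=2$). Since $h'$ is additive (section~\ref{sec:5}) --- or at least behaves additively when one summand is a suspension, which is all that is used --- we get $h'([\widetilde{f}_1]) = h'([\widetilde{f}_2]) + h'([\widetilde{f}]) = h'([\widetilde{f}_2])$, proving the first assertion. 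For the converse, assume $n$ even and $m \leq 3n-4$; then $\#G \leq 2$ automatically (Euler characteristic), so Theorem~\ref{thm:1.30} says $\MC(f_1,f_2) < \infty \Leftrightarrow [\widetilde{f}] \in E(\pi_{m-1}(S^{n-1}))$. I would then identify $\Ker h'$ with $E(\pi_{m-1}(S^{n-1}))$: in this range the higher Hopf--Hilton invariants $h_j' \colon \pi_m(S^n) \to \pi_m(S^{n+j(n-1)})$ vanish for $j \geq 2$ since $n+j(n-1) \geq 3n-2 > m$, the surviving $h_1'$ is (up to sign) the (James--)Hopf invariant $H$ of $w_1' = [\iota_1,\iota_2]$, and the EHP sequence of the even sphere $S^n$ gives $\Ker H = E(\pi_{m-1}(S^{n-1}))$. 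Combined with additivity of $h'$ this yields $h'([\widetilde{f}_1]) = h'([\widetilde{f}_2]) \Leftrightarrow [\widetilde{f}] \in \Ker h' = E(\pi_{m-1}(S^{n-1})) \Leftrightarrow \MC(f_1,f_2) < \infty$.

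\emph{Part~\ref{cor:1.32-item2}.} First, $n$ odd implies $S^n$ admits a nowhere-zero vector field, so the tangent sphere bundle $\ST(S^n) \to S^n$ has a section, its homotopy sequence splits, and $\partial = \partial_{S^n} = 0$; hence the Wecken condition~\ref{equ:1.29} holds and Theorem~\ref{thm:1.28} gives $\MCC \equiv N_0$. Second, by the concluding remark in Corollary~\ref{cor:1.16}\ref{cor:1.16-item1} (for odd $n \geq 3$), $N_0(f_1,f_2)$, hence $\MCC(f_1,f_2)$, equals $\#G$ or $0$ according as $\widetilde{f}_1 \not\sim \widetilde{f}_2$ or $\widetilde{f}_1 \sim \widetilde{f}_2$. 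Third, if the stated proviso on $[\widetilde{f}_1] - [\widetilde{f}_2]$ holds, then $\MC(f_1,f_2) < \infty$ by Theorem~\ref{thm:1.30}, hence $\MC = \MCC$ by Theorem~\ref{thm:1.30}\ref{thm:1.30-item1}, which is the displayed formula; if the proviso fails, then $[\widetilde{f}_1] - [\widetilde{f}_2] \neq 0$ (both relevant subgroups contain $0$), so $\widetilde{f}_1 \not\sim \widetilde{f}_2$ (based and free homotopy agree for maps into the simply connected $S^n$), whence $\MCC = \#G$ by the second step while $\MC = \infty$ by Theorem~\ref{thm:1.30}.

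The one step needing genuine care is the identification $\Ker h' = E(\pi_{m-1}(S^{n-1}))$ in part~\ref{cor:1.32-item1}: it rests on the hypothesis $m \leq 3n-4$ (to kill the higher Hopf--Hilton invariants and to keep $h_1'$ equal to the Hopf invariant), on the EHP structure of the even sphere, and on the additivity of $h'$ from section~\ref{sec:5}. Everything else --- the vector-field argument for~\ref{equ:1.29}, the passage $\MCC \equiv N_0$, and the finiteness dichotomy for $\MC$ --- is bookkeeping with results already established; the degenerate cases $n = 2$ and $m < n$ are immediate from Theorem~\ref{thm:1.30}\ref{thm:1.30-item2}.
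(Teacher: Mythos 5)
Your proposal is correct and follows essentially the same route as the paper's own (very terse) proof: part \ref{cor:1.32-item1} is deduced from Theorem \ref{thm:1.30}, the inclusion chain displayed just before the corollary, and the identification of $h'$ with the Hopf--James invariant $H$ in the exact EHP--sequence when $m \le 3n-4$ (the paper justifies $h'=H$ by stability of $\pi_m(S^{2n-1})$ and a citation of Boardman--Steer, where you argue by dimension-counting on the higher $h_j'$ --- the same fact); part \ref{cor:1.32-item2} is exactly the paper's combination of Theorem \ref{thm:1.28}, Corollary \ref{cor:1.16}\ref{cor:1.16-item1} with $a\sim\id$ for odd $n$, and the finiteness dichotomy of Theorem \ref{thm:1.30}. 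You in fact supply more of the routine detail than the published proof does.
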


Thus if $\ n \ $ is odd and $\ E(\Ker(\underline{h}')) \, \neq \,
E(\pi_{m-1}(S^{n-1})) \ $ the finiteness of $\ \MC(f_1, f_2) \ $
depends strongly on $\ \#G \,$.

\begin{exa}\label{exa:1.33}
  Let
  $\ m \, = \, 2n - 2 \ $
  and
  $\ n \, = \, 3, \, 5 \ $
  or
  $\ 9 \,$,
  resp..
  Then
  $\ \pi_m(S^n) \ $
  is a cyclic group of order
  $\ 2, \ 24 \ $
  or
  $\ 240 \,$,
  resp., and
  \begin{equation*}
    E(\Ker(\underline{h}')) \, = \, 2 \cdot \pi_m(S^n) \, \neq \, \pi_m(S^n) \, = \, E(\pi_{m-1}(S^{n-1})) \,.
  \end{equation*}
  If
  $\ \#G \, \leq \, 2 \ $
  then
  $\ \MC(f_1, f_2) \, < \, \infty \ $
  for all maps
  $\ f_1, f_2 \, \colon \, S^m \, \longrightarrow \, S^n \diagup G \,$.
  However, if
  $\ \#G \, \geq \, 3 \ $
  and
  $\ [\widetilde{f}_1] - [\widetilde{f}_2] \, \notin \, 2 \pi_m(S^n) \ $
  then
  $\ \MC(f_1, f_2) \, = \, \infty \,$.
  \qed
\end{exa}

\vspace{1ex} Finally let us take a look at maps into surfaces.
\begin{exa}\label{exa:1.34}
  {\bfseries $\boldsymbol{m \, > \, n \, = \, 2 }$.}\\
  Given maps
  $\ f_1, f_2 \ $
  fom
  $\ S^m, \, m \, > \, 2 \,$,
  into any closed surface
  $\ Y \,$,
  we have
  \begin{align*}
    \MC(f_1, f_2) \, &= \, \begin{cases}
                             \infty &   \text{if } f_1 \, \not\sim \, f_2 \,; \\
                             0      &   \text{if } f_1 \, \sim \, f_2 \,;
                           \end{cases}\\
    \intertext{and}
    \MCC(f_1, f_2) \, &= \, N_0(f_1, f_2) \, = \, \begin{cases}
                                                   \# \pi_1(Y)  &   \text{if } f_1 \, \not\sim \, f_2 \,; \\
                                                   0        &   \text{if } f_1 \, \sim \, f_2 \,.
                                                 \end{cases}
  \end{align*}

  The same result holds for maps from
  $\ S^m\ $
  into an $n$--dimensional spherical space form whenever
  $\ m, n \, \geq \, 2 \ $
  and
  $\ \pi_{m-1}(S^{n-1}) \, = \, 0 \,$,
  e.g. when
  $\ [f_1], [f_2] \, \in \, \pi_{22}(S^{10} \diagup G) \, \cong \, \Z_{12} \,$.
  \qed
\end{exa}

\vspace{1ex} When the target manifold of our maps is not a spherical
space form certain finiteness conditions for the minimum number $\
\MC\ $ can still be expressed in terms of Hopf--Ganea invariants
(cf. \cite{K3}, corollary 7.4 and theorem 7.6).

When the target manifold is a (real, complex or quaternionic)
projective space a detailed discussion of minimum numbers and
certain Nielsen numbers was carried out e.g in \cite{K5} and
\cite{K7}.

\begin{problem}\label{problem:1.35}
  Let
  $\ f_1, f_2 \, \colon \, X \, \longrightarrow \, Y \ $
  be maps between arbitrary smooth connected manifolds,
  $\ X\ $
  being compact.

  Is
  $\ \MC(f_1, f_2) \, = \, \MCC(f_1, f_2) \ $
  whenever
  $\ \MC(f_1, f_2) \, < \, \infty \,$?
  Also: give general complete criteria for
  $\ \MC(f_1, f_2) \ $
  being finite.

  Recall that the case
  $\ n \, = \, 2\ $
  may play a special role here (cf. \cite{K3}, theorem 1.2(iii)).
\end{problem}

\begin{con_and_not}
  Throughout this paper
  $\ m, n \, \geq \, 1 \,$,
  and
  $\ Y\ $
  is a smooth connected n--dimensional manifold (Hausdorff, having a countable base) without boundary and with basepoint
  $\ y_0 \, $.
  Let
  $\ \Omega Y \, = \, \Omega(Y,y_0) \ $
  (and
  $\ (\Omega Y)^+ \,$,
  resp.) denote the loop space (with a single disjoint point added, resp.).
  $\ E\ $
  stands for the Freudenthal suspension.
  $\ T_y(Y)\ $
  is the tangent space of
  $\ Y\ $
  at a point
  $\ y \, \in \, Y \, $.
  If
  $\ p \, \colon \, \widetilde{Y} \, \longrightarrow \, Y \ $
  denotes the universal covering we equip
  $\ \widetilde{Y}\ $
  with a basepoint
  $\ \widetilde{y}_0 \, \in \, p^{-1} \lt \lbrace y_0 \rbrace \rt $.
  Identity maps are denoted by
  $\ \id \,$.
  The symbols
  $\ \sim \ $
  (or
  $\ \not\sim \,$,
  resp.) mean freely homotopic (or not, resp.).
  $\ \# S\ $
  is the cardinality of a set
  $\ S\,$.
\end{con_and_not}

\begin{acknowledgement}
  It is a pleasure to thank Marek Golasinski for very valuable references.
\end{acknowledgement}
\section{The group $\ \boldsymbol{\pi_m(S^q \wedge (\Omega Y)^+)}\ $ and the partial suspension homomorphism \ e.}\label{sec:2}

\indent

In our discussion of Nielsen numbers a central r\^ole will be played by Thom spaces of the form 
$\ S^q \wedge ((\Omega Y)^+) \ $.
In this section we interpret these as fibers of appropriate fibrations.
This will allow us to study their homotopy groups, as well as suspension homomorphisms which are important in coincidence theory.

Fix integers 
$\ m,\ q\ \geq\ 1\ $
and base points
$\ \infty \ \in\ S^q,\ y_0 \in Y $.
Then the obvious collapsing map
\begin{equation}\label{equ:2.1}
  p_2 \ :\ S^q \vee \ Y \ \longrightarrow \ Y
\end{equation}
can be transformed (up to homotopy equivalences) into the fibration
$\ \mathrm{ev}_1\ $
in the top line of the homotopy commutative diagram
\begin{equation}
  \begin{aligned}\label{equ:2.2}
    \xymatrix @R=1.0cm @C=2.1cm {
      \ \ F \ \                                 \ar@<-5.5pt>@{}[r]^-{\subset}            \ar[d]^-{\text{quot}}_{\rotatebox{270}{$\sim$}}  &
      \ \ Z \ \                                 \ar[r]^-{\mathrm{ev}_1}         \ar[d]_{\rotatebox{270}{$\sim$}}  &
      \ \ Y \ \                                 \ar@{=}[d]      \\
      \ \ S^q \wedge ((\Omega Y)^+) \ \         \ar[r]_-{j}     &
      \ \ S^q \vee Y \ \                        \ar@<-2pt>[r]_-{p_2}   &
      \ \ Y \ \                                 \ar@{-->}@<-2pt>[l]_-{incl_Y}
    }
  \end{aligned}
\end{equation}
Here
\begin{equation}\label{equ:2.3}
  Z\ :=\ \left\lbrace (x, \theta) \;\in\; (S^q \vee Y) \,\times\, Y^I \ \left|\right.\  \theta(0) \;=\; p_2(x) \ \right\rbrace
\end{equation}
is homotopy equivalent to
$\ S^q \vee Y\ $
(via the first projection); the fiber map
$\ \mathrm{ev}_1\ $
evaluates the path
$\ \theta \ \text{ at } 1\ \in\ I := [0,\,1]$.
The fiber
\begin{equation}\label{equ:2.4}
  F\ = \ \left\lbrace (x, \theta) \;\in\; (S^q \vee Y) \,\times\, Y^I \ \left|\right.\  \theta(0) \;=\; p_2(x),\, \theta(1)=y_0 \  \right\rbrace
\end{equation}
contains the contractible subspace
$\ P = \ \left\lbrace \, (x, \theta)\, \in \, F \ \vert \ x \,\in\, Y \, \right\rbrace $,
and the quotient map
\begin{equation}\label{equ:2.5}
  \text{quot} \ \colon \ F \longrightarrow F / P \; = \; S^q \wedge ((\Omega Y)^+)
\end{equation}
is a homotopy equivalence (compare \cite{C}, p.2769,
and \cite{K3}, lemma 7.1).
Its inverse (when composed with the fiber inclusion) yields the map
\begin{equation}\label{equ:2.6}
  j \ \colon \ S^q \wedge ((\Omega Y)^+) \longrightarrow S^q \vee Y \ .
\end{equation}
Obviously the collapsing map
$\ p_2\ $
(cf. (\ref{equ:2.1})) allows a canonical right inverse.
Thus the exact homotopy sequence of the fibration
$\ \mathrm{ev}_1 \ $
splits and takes the following form:
\begin{equation}
  \begin{aligned}\label{equ:2.7}
    \xymatrix @R=0.5cm @C=1.0cm {
    0   \ar[r]  &
    \pi_m(S^q \wedge ((\Omega Y)^+))    \ar[r]^-{j_*}   &
    \pi_m(S^q \vee Y)   \ar@<2pt>[r]^-{p_{2*}}        &
    \pi_m(Y)    \ar@{-->}@<2pt>[l]^-{\text{incl}_Y*} \ar[r]  &
    0
    }
  \end{aligned} \ .
\end{equation}
We conclude

\begin{prop}\label{prop:2.8}
  The map
  $\, j\, $
    (cf. (\ref{equ:2.6})) induces the isomorphism
  \begin{equation*}
    \xymatrix @R=0.1cm @C=1.0cm {
      j_* \, \colon \, \pi_m\left( S^q \wedge (( \Omega Y)^+ ) \right) 	\ar[r]^-{\cong}	 &
      \Ker_{m,q}(Y) \ :=\ \Ker(\ p_{2*}\ \colon\ \pi_m( S^q \vee Y ) \longrightarrow \pi_m(Y) \ ).
    }
  \end{equation*}
\end{prop}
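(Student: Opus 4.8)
My plan is to read the proposition directly off the fibration in diagram~\eqref{equ:2.2} together with the splitting~\eqref{equ:2.7}, so that the argument is essentially bookkeeping.

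\textbf{Step 1: identify $j$ with a fiber inclusion.} I would first note that all three vertical maps in~\eqref{equ:2.2} are homotopy equivalences. The middle one is the usual replacement of $p_2$ by a fibration: the first projection $Z\to S^q\vee Y$ (with $Z$ as in~\eqref{equ:2.3}) is a homotopy equivalence, compatible with the maps down to $Y$. The left vertical map $\quot$ is a homotopy equivalence by~\eqref{equ:2.5}, since the subspace $P\subset F$ is contractible and the pair is a cofibration (cf.\ \cite{C}, p.~2769, and \cite{K3}, lemma~7.1). Hence $\mathrm{ev}_1\colon Z\to Y$ is, over $Y$, fiber-homotopy equivalent to the collapsing map $p_2\colon S^q\vee Y\to Y$, and under this equivalence the fiber inclusion $F\hookrightarrow Z$ corresponds --- via the chosen homotopy inverse of $\quot$ followed by $F\hookrightarrow Z$ --- precisely to the map $j\colon S^q\wedge((\Omega Y)^+)\to S^q\vee Y$ of~\eqref{equ:2.6}. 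In particular the induced map $j_*$ on $\pi_m$ is the homomorphism induced by the fiber inclusion of the fibration $\mathrm{ev}_1$.

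\textbf{Step 2: exploit the section.} The collapsing map $p_2$ carries the canonical section $\incl_Y\colon Y\hookrightarrow S^q\vee Y$, the inclusion of the $Y$-summand; equivalently $\mathrm{ev}_1$ admits a section. Therefore $p_{2*}\colon\pi_m(S^q\vee Y)\to\pi_m(Y)$ is surjective in every degree, so every connecting homomorphism in the exact homotopy sequence of $\mathrm{ev}_1$ vanishes and that sequence decomposes into the short exact sequences~\eqref{equ:2.7},
\[
  0\,\longrightarrow\,\pi_m\!\bigl(S^q\wedge((\Omega Y)^+)\bigr)\,\xrightarrow{j_*}\,\pi_m(S^q\vee Y)\,\xrightarrow{p_{2*}}\,\pi_m(Y)\,\longrightarrow\,0 .
\]
Exactness on the left gives injectivity of $j_*$, and exactness in the middle gives $\operatorname{im}(j_*)=\Ker(p_{2*})=\Ker_{m,q}(Y)$, which is exactly the asserted isomorphism.

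\textbf{On the difficulty.} There is no genuine obstacle; the only points deserving care are the two homotopy equivalences in Step~1 --- in particular that $\quot$ is one, which is where \cite{C} and \cite{K3} are invoked --- and the fact that they are compatible enough to identify the abstract fiber inclusion with $j$ up to homotopy. For $m=1$ one also observes that, although $\pi_1$ need not be abelian, a split epimorphism of groups still yields a short exact sequence with $\operatorname{im}(j_*)=\Ker(p_{2*})$, so the statement holds verbatim in that case as well.
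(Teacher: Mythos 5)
Your argument is correct and is essentially the paper's own: Proposition \ref{prop:2.8} is stated there as an immediate consequence of diagram \eqref{equ:2.2} (with $\quot$ a homotopy equivalence by the cited references) and of the splitting of the exact homotopy sequence of $\mathrm{ev}_1$ into the short exact sequences \eqref{equ:2.7}, exactly as in your Steps 1 and 2. Your extra remark about $m=1$ is a harmless refinement of the same reasoning.
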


\vspace{2ex}
It will be useful to describe
$\ j_*\ $
geometrically.
Let
$\ B^q(r) \ $
(and
$\ \partial B^q(r), \ $
resp.),
$\ r\, > \,0,\ $
denote the compact ball (and sphere, resp.) of radius
$\ r\ $
in
$\ \R^q,\ $
and use an (orientation preserving) standard identification
$\ B^q(r) \diagup \partial B^q(r) \ =\ S^q.\ $
Given a base point preserving map
\begin{equation*}
  u\ \colon\ S^m \ =\ \R^m \cup \lbrace\infty\rbrace \ \longrightarrow\ 
  \left( B^q(1) \times \Omega Y \right) \diagup
  \left( \partial B^q(1) \times \Omega Y \right) \ =\ S^q \wedge \left( (\Omega Y)^+ \right) ,
\end{equation*}
we may deform it until we have the following standard situation (as in Pontrjagin--Thom theory):
there is a smoothly embedded tubular neighbourhood
$\ T_3\ :=\ B^q(3) \, \times C \; \subset \R^m \ $
of
$\ C\ :=\ u^{-1} \left( \lbrace0\rbrace \times \Omega Y \right)\ $
such that
\begin{equation}\label{equ:2.9}
  u(x)\ =\ 
  \begin{cases}
    \left[ ( v, \theta_{u(c)} ) \right]  &  \text{if } x = (v, c) \in B^q(1) \times C; \\
    \infty                               &  \text{if } x \notin \mathring{B}^q(1) \times C.
  \end{cases}
\end{equation}
Here
$\ \infty\ $
denotes also the base point of the Thom space
$\ \left( \R^q \times \Omega Y \right) \cup \lbrace\infty\rbrace \ =\ S^q \wedge \left( (\Omega Y )^+ \right),\ $
and
$\ \theta_{u(c)} \ =\ u(0,c)\; \in \; \Omega Y \ (= \lbrace 0 \rbrace \times \Omega Y ).\ $
Thus
$\ u\ $
maps all normal slices
$\ \mathring{B}^q(1) \times \lbrace c \rbrace \text{ in } T_3 \ $
by the same diffeomorphism to the corresponding fibers
$\ \R^q \times \lbrace\theta_{u(c)} \rbrace \ $
in the Thom space,
$\ c \in C. \ $

Next consider the map
\begin{equation*}
  u'\ \colon \  S^m\ \longrightarrow \ S^q \vee Y
\end{equation*}
defined by
\begin{equation}\label{equ:2.10}
  u'(x) \ =\ 
  \begin{cases}
    [v] \in B^q(1) \diagup \partial B^q(1) = S^q        & \quad \text{if }\, x \ =\ (v,c) \in B^q(1) \times C; \\
    \theta_{u(c)}(\parallel v \parallel - 1) \in Y      & \quad \text{if }\, x \ =\ (v,c) \in \left(B^q(2) \setminus \mathring{B}^q(1)\right) \times C; \\
    \text{wedge point of } S^q \vee Y                   & \quad \text{if }\, x \ \notin B^q(2) \times C.
  \end{cases}
\end{equation}

\addtocounter{figure}{10}
\begin{figure}[htb]
  \centering
  \input{./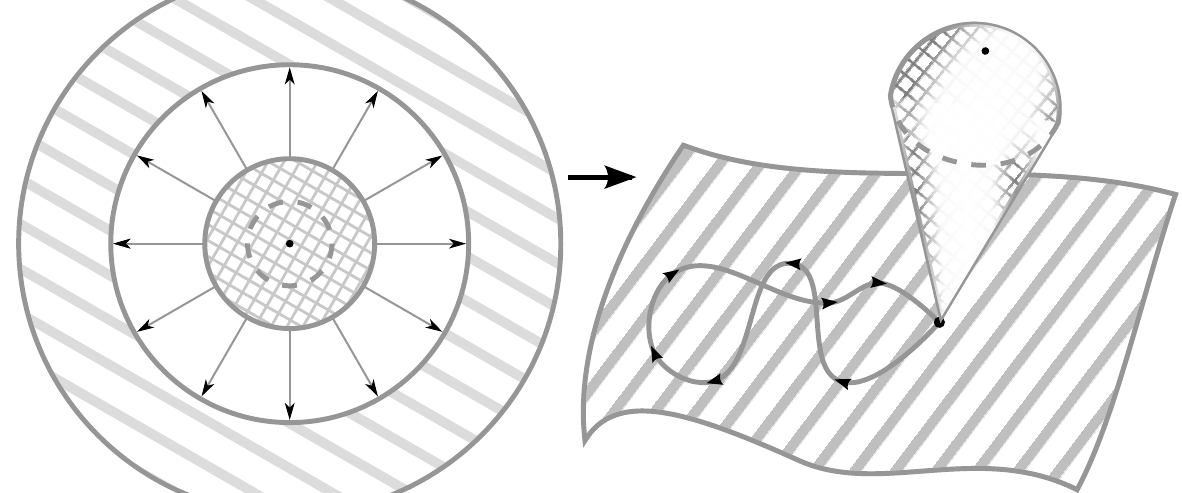_tex}
  \caption{The image of $\ u'\ $ on any normal slice $\ B^q(3) \times \lbrace c \rbrace,\; c \in C$.}
  \label{fig:figure2.11}
\end{figure}

\stepcounter{equation}
\begin{prop}\label{prop:2.12}
  $\ j_* ([u])\ =\ [u'].\ $
\end{prop}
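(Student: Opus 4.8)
\emph{Proof idea.} The plan is to read off a concrete formula for $j$ from diagram \eqref{equ:2.2} and then to evaluate it on $u$ by lifting the candidate image $u'$ back into the fibre $F$. The left-hand square of \eqref{equ:2.2} commutes up to homotopy; interpreting the vertical arrow $Z\xrightarrow{\sim}S^q\vee Y$ as the first projection $\pr_1\colon(x,\theta)\mapsto x$ and $\quot$ as in \eqref{equ:2.5}, this says precisely that $j\circ\quot$ is homotopic to the map
\[
  q_1\colon F\longrightarrow S^q\vee Y,\qquad q_1(x,\theta)=x
\]
(that is, the fibre inclusion $F\hookrightarrow Z$ followed by $\pr_1$). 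Hence, to prove Proposition~\ref{prop:2.12} it suffices to exhibit one map $\widehat{u}'\colon S^m\to F$ with $q_1\circ\widehat{u}'=u'$ and $\quot\circ\widehat{u}'=u$; for then
\[
  j_*[u]\;=\;[j\circ u]\;=\;[(j\circ\quot)\circ\widehat{u}']\;=\;[q_1\circ\widehat{u}']\;=\;[u'] \, ,
\]
where we have used $\quot\circ\widehat{u}'=u$, the homotopy $j\circ\quot\simeq q_1$, and $q_1\circ\widehat{u}'=u'$.

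Next I would write $\widehat{u}'$ down directly from the standard picture \eqref{equ:2.9}--\eqref{equ:2.10} (Figure~\ref{fig:figure2.11}). For $p\in S^m$ put $\widehat{u}'(p):=(u'(p),\vartheta_p)\in(S^q\vee Y)\times Y^I$, where
\[
  \vartheta_p\;:=\;
  \begin{cases}
    \theta_{u(c)} & \text{if } p=(v,c)\in B^q(1)\times C; \\[0.25em]
    \bigl(\,t\mapsto\theta_{u(c)}\bigl((\|v\|-1)+t(2-\|v\|)\bigr)\,\bigr) & \text{if } p=(v,c)\in\bigl(B^q(2)\setminus\mathring{B}^q(1)\bigr)\times C; \\[0.25em]
    (\text{the constant loop at }y_0) & \text{otherwise.}
  \end{cases}
\]
Thus on the inner ball $\vartheta_p$ is the full loop $\theta_{u(c)}$ (a legitimate path from $p_2([v])=y_0$ to $y_0$), on the collar $1\le\|v\|\le2$ it is the portion of $\theta_{u(c)}$ running from $\theta_{u(c)}(\|v\|-1)=u'(p)$ to $y_0$, and it is constant elsewhere. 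Using \eqref{equ:2.10} one checks that $\widehat{u}'(p)$ lies in $F$ (both endpoint conditions of \eqref{equ:2.4} hold) and that $\widehat{u}'$ is continuous: the only matching to verify is along the spheres $\|v\|=1$ and $\|v\|=2$, and there the two relevant clauses agree (at $\|v\|=1$ each gives $u'(p)=y_0$ and $\vartheta_p=\theta_{u(c)}$; at $\|v\|=2$ each gives $u'(p)=y_0$ and $\vartheta_p$ constant). By construction $q_1\circ\widehat{u}'=u'$. Finally, under the identification $F/P=(\R^q\times\Omega Y)\cup\{\infty\}$ from \eqref{equ:2.4}--\eqref{equ:2.5}, the map $\quot$ sends a pair $(x,\theta)\in F$ with $x\in Y$ to $\infty$ and a pair with $x=[v]\in S^q$, $v\in\mathring{B}^q(1)$, to $[(v,\theta)]$; hence for $p=(v,c)\in\mathring{B}^q(1)\times C$ one gets $\quot(\widehat{u}'(p))=[(v,\theta_{u(c)})]=u(p)$ by \eqref{equ:2.9}, while for every other $p$ the first coordinate of $\widehat{u}'(p)$ lies in $Y$, whence $\quot(\widehat{u}'(p))=\infty=u(p)$. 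So $\quot\circ\widehat{u}'=u$, and the displayed chain yields $j_*[u]=[u']$.

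The genuinely delicate point --- the place where a careless argument would break --- is the behaviour along the sphere $\|v\|=1$, where $u'$ leaves $S^q$ and enters $Y$: matching $\vartheta_p$ across this sphere is exactly why one must take the \emph{full} loop on the inner ball but only its \emph{tail} on the collar, and this is the structural reason the definition \eqref{equ:2.10} of $u'$ spends the extra collar of radii $[1,2]$ unwinding $\theta_{u(c)}$ inside $Y$. Two routine points should be recorded in passing: $\pr_1\colon Z\to S^q\vee Y$ is a homotopy equivalence, its fibres being contractible path spaces, which is what makes diagram \eqref{equ:2.2} meaningful; and $\quot$ is a \emph{based} homotopy equivalence --- immediate since $P\subset F$ is a based cofibration with $P$ contractible --- so that the homotopy $j\circ\quot\simeq q_1$ may be chosen rel basepoint and the final equality holds in $\pi_m$. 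Everything else is bookkeeping with \eqref{equ:2.9} and \eqref{equ:2.10}.
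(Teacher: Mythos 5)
Your proof is correct and takes essentially the same route as the paper: both arguments exhibit an explicit lift of $u'$ to the fibre $F$ whose image under $\quot$ is exactly $u$, built slice by slice over the tubular neighbourhood, and then invoke the homotopy commutativity of diagram (\ref{equ:2.2}). The only (immaterial) difference is the choice of path over the collar $1\le \| v\| \le 2$: you take the reparametrized tail $\theta_{u(c)}\vert_{[\,\| v\| -1,\,1]}$, which is already constant at $\| v\| =2$, whereas the paper uses $\bigl(\theta_{u(c)}\vert_{[0,\,\| v\| -1]}\bigr)^{-1}\cdot\theta_{u(c)}$ and therefore spends the extra collar $2\le \| v\| \le 3$ contracting $\theta_{u(c)}^{-1}\cdot\theta_{u(c)}$ to the constant loop.
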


\begin{proof}
  We need only to lift
  $\ u' \ $
  to a map
  \begin{equation*}
    \widetilde{u}'\ \colon\ S^m \longrightarrow F \; \subset \; Z
  \end{equation*}
  such that
  $\ \quot \comp \widetilde{u}' \; \sim \; u \ $
  (compare diagram \ref{equ:2.2}).
  In view of the standard form of
  $\ u\ $
  and
  $\ u'\ $
  we can do so slice by slice.
  Given
  $\ c \in C,\ $
  let
  $\ x = (v,c)\ $
  lie in the normal slice
  $\ B^q(3) \times \lbrace c \rbrace\ $
  in the tubular neighbourhood
  $\ T_3.\ $
  Then we must find a path
  $\ \theta\ $
  in
  $\ Y\ $
  starting from
  $\ p_2\left(u'(x)\right)\ $
  and ending at
  $\ y_0.\ $
  If
  $\ \parallel v \parallel \leq 1, \ $
  then
  $\ p_2\left(u'(x)\right) \ =\ y_0\ $
  and we put
  $\ \theta \ = \ \theta_{u(c)}\ $
  (compare (\ref{equ:2.9}) and (\ref{equ:2.10})).
  If
  $\ 1 \leq \parallel v \parallel \leq 2, \ $
  then
  $\ p_2\left(u'(x)\right) \ =\ \theta_{u(c)}( \parallel v \parallel - 1),\ $
  and we define 
  $\ \theta\ $
  to be the path which first goes {\itshape back} to
  $\ y_0\ $
  along
  $\ \theta_{u(c)}\ $
  and then traverses the full loop
  $\ \theta_{u(c)}.\ $
  In particular, if
  $\ \parallel v \parallel \ =\ 2, \ $
  then
  $\ \theta\ $
  is
  $\ \theta^{-1}_{u(c)}\ $
  followed by
  $\ \theta_{u(c)}. \ $
  We use the remaining parameter
  $\ 2\ \leq\ \parallel v \parallel \ \leq\ 3\ $
  in the outer part
  $\ \left( B^q(3) \setminus B^q(2) \right) \times \lbrace c \rbrace \ $
  of our normal slice to deform
  $\ \theta^{-1}_{u(c)} \cdot \theta_{u(c)} \ $
  in
  $\ \Omega(Y)\ $
  to the constant loop.
  
  This procedure allows us to construct a continuous lifting of
  $\ u'\ $
  on the whole tubular neighbourhood
  $\ T_3\ =\ B^q(3) \times C,\ $
  and it can be extended trivially to all of
  $\ S^m. \ $
  All but the innermost part
  $\ B^q(1) \times C\ $
  of
  $\ T_3\ $
  gets mapped to
  $\ P\ $
  (cf. (\ref{equ:2.5})) so that
  $\ \quot \comp \widetilde{u}'\ =\ u\ $
  and
  $\ j_*([u])\ =\ (j \comp \quot)_*([\widetilde{u}'])\ =\ [u'],\ $
  as required.
\end{proof}

Next we construct a partial suspension homomorphism\footnote{After I had written this paper M. Golasinski drew my attention to the work of H. J. Baues who had introduced partial suspensions for suitable spaces $ A, B $ and $ Y $ (cf. \cite{Ba}, chapter 3).
My explicit geometric construction turns out to agree with Baues' homotopy theoretical definition for the case $ A = S^m, B = S^q $.}
\begin{equation}
  \begin{aligned}\label{equ:2.13}
    \xymatrix @R=0.5cm @C=1.0cm {
    e\ \colon \ \Ker_{m,q}(Y)   \ar@<-5.5pt>@{}[d]^-{\rotatebox{270}{$\subset$}}   \ar[r] &
    \Ker_{m+1,q+1}(Y)           \ar@<-5.5pt>@{}[d]^-{\rotatebox{270}{$\subset$}}   \\
    \pi_m(S^q \vee Y)           &
    \pi_{m+1}(S^{q+1} \vee Y)
    }
  \end{aligned}
\end{equation}
(compare proposition \ref{prop:2.8}) which suspends
$\ S^q,\ $
but leaves
$\ Y\ $
unchanged.
We use the same approach and notations as in (\ref{equ:2.9}) and (\ref{equ:2.10}).

Given
$\ [w] \,\in\, \Ker_{m,q}(Y),\ $
we may assume that
\begin{equation}\label{equ:2.14}
  w\ \colon\ S^m \;=\; \R^m \cup \lbrace\infty\rbrace \ \longrightarrow\ \left( B^q(1) \diagup \partial B^q(1) \right) \vee Y \ =\ S^q \vee Y
\end{equation}
has the following standard form:
there is a tubular neighbourhood
$\ T_1\ =\ B^q(1) \times C \ \subset \R^m \ $
of
$\ C\ =\ \lbrace 0 \rbrace \times C \ $
such that
\begin{enumerate}[label=(\roman*)]
  \item $\ w \vert T_1\ $ is the obvious composed projection from
    $\ B^q(1) \times C \ $
    to
    $\ B^q(1) \diagup \partial B^q(1)\ =\ S^q \subset S^q \vee Y; \ $
    and
  \item $\ w \left( S^m \setminus \left( \mathring{B}^q(1) \times C \right) \right) \ \subset\  Y \ \subset\  S^q \vee Y. $
\end{enumerate}
Pick a base point preserving nullhomotopy
$\ W\ \colon\ S^m \times [0,1] \longrightarrow Y \ $
from
$\ p_2 \comp w \ $
to the constant map and define
\begin{equation*}
  e_W \vert \ \; \colon \ \; \R^m \times [-2,2] \ \; \longrightarrow \ \; S^{q+1} \vee Y
\end{equation*}
\begin{enumerate}[label=(\roman*)]
  \item on
    $\ T_1 \times [-1,1] \;=\; \left( B^q(1) \times [-1,1] \right) \times C \ $
    by the obvious projection to
    \begin{equation*}
      \left( B^q(1) \times [-1,1] \right) \diagup \partial \left( B^q(1) \times [-1,1] \right) \;=\; S^{q+1} \; \subset \; S^{q+1} \vee Y;
    \end{equation*}
  \item on
    $\ \left( \R^m - T_1 \right) \times [-1,1] \ $
    by the projection to
    $\ \R^m - T_1,\ $
    composed with the restricted map
    $\ w\vert \ $
    into
    $\ Y \subset S^{q+1} \vee Y;$
  \item for
    $\ \left( x, x_{m+1} \right) \,\in\, \R^m \times \R \ $
    with
    $\ 1 \leq \left| x_{m+1} \right| \leq 2 \ $
    by
    \begin{equation*}
      e_W(x, x_{m+1}) \,=\, W(x, \left| x_{m+1} \right| -1 ) \ \  \in \ \  Y \  \subset\  S^{q+1} \vee Y \, .
    \end{equation*}
\end{enumerate}
These piecewise definitions fit well together and allow a trivial extension
$\ e_W \ $
to all of
$\ S^{m+1} \,=\, (\R^m \times \R) \cup \lbrace\infty\rbrace.$

The resulting homotopy class
\renewcommand{\theequation}{\arabic{section}.\arabic{equation}'}
\addtocounter{equation}{-2}
\begin{equation}\label{equ:2.13Strich}
  e\left( [w] \right) \ :=\ [e_W] \; \in \; \pi_{m+1} \left( S^{q+1} \vee Y \right)
\end{equation}\numberwithin{equation}{section}\addtocounter{equation}{1}
is independent of our choice of
$\ W.\ $
Indeed, given another nullhomotopy
$\ W' \ $
of
$\ p_2 \comp w, \ $
let
$\ [ W^{-1} \cdot W' ] \; \in \; \pi_{m+1}(Y) \; \subset \; \pi_{m+1}(S^{q+1} \vee Y) \ $
be defined by concatenation; then
\begin{equation*}
  [e_{W'}] \ =\ -[ W^{-1} \cdot W' ] + [e_W] + [ W^{-1} \cdot W' ] \ =\ [e_{W}].
\end{equation*}
Similarly,
$\ p_{2*}([e_W]) \ =\ 0, \ $
again due to the symmetry property of our construction with respect to the variable
$\ x_{m+1}.$

Thus we obtain a welldefined partial suspension homomorphism
$\ e\ $
as in (\ref{equ:2.13}).
Clearly
$\ e\ $
restricts to the full (standard) suspension on the subgroup
$\ \pi_m(S^q)\ $
of
$\ \pi_m(S^q \vee Y).$\\

\begin{rem}\label{rem:2.15}
  The representation of
  $\ e([w])\ $
  need not to be quite so specific as in (\ref{equ:2.13Strich}).
  Let
  \begin{equation*}
    \widehat{T}_1 \ =\ B^{q+1}(1) \times C \ \  \hookrightarrow \ \  \R^m \times \R \ =\ \R^{m+1} \ \subset \ S^{m+1}
  \end{equation*}
  be a tubular neighbourhood inclusion with extends the inclusion of
  $\ T_1 \ =\ B^q(1) \times C\ $
  into
  $\ \R^m \; = \; \R^m \times \lbrace 0 \rbrace \ $
  and takes the last coordinate in
  $\ B^{q+1}\ $
  to
  $\ x_{m+1}. \ $
  Also, let
  \begin{equation*}
    \widehat{W} \ \ \colon \ \ S^{m+1} \setminus \widehat{T}_1 \ \ \longrightarrow \ \ Y \ \subset \ S^{q+1} \vee Y
  \end{equation*}
  be any map which extends
  $\ w\vert S^m \setminus T,\ $
  maps the boundary
  $\ \partial \widehat{T}_1\ $
  of
  $\ \widehat{T}_1\ $
  to the wedgepoint and satisfies the {\itshape symmetry condition}
  \begin{equation*}
    \widehat{W}(x, x_{m+1} ) \ =\ \widehat{W}(x, -x_{m+1} )
  \end{equation*}
  for all
  $\ (x, x_{m+1} ) \,\in\, \R^m \times \R \subset S^{m+1}, \ (x, x_{m+1}) \notin \widehat{T}_1. \ $
  Using a suitable ambient deformation of
  $\ \widehat{T}_1 \ $
  into
  $\ T_1 \times [-1,1] \ $
  it is not hard to see that the map
  \begin{equation*}
    e_{\widehat{W}} \ \colon\ S^{m+1} \ \longrightarrow\ S^{q+1} \vee Y,
  \end{equation*}
  defined by
  $\ \widehat{W} \ $
  and the projection
  \begin{equation*}
    \widehat{T}_1 \ =\ B^{q+1} \times C \ \longrightarrow\ B^{q+1}(1) \diagup \partial B^{q+1}(1) \ = \ S^{q+1} \subset S^{q+1} \vee Y,
  \end{equation*}
  represents
  $\ e([w]).$
  \qed
\end{rem}

\begin{thm}\label{thm:2.16}
  The partial suspension homomorphism
  $\ e\ $
  has the following properties (where
  $\ E\ $
  denotes (standard) full suspension homomorphisms and
  $\ m, m', q, q' \geq 1$):
  \begin{enumerate}[label=(\alph*)]
    \item \label{thm:2.16a} {\bfseries Compatibility with the isomorphism}
      $\boldsymbol{\ j_*.\ }$
      The diagram
      \begin{equation*}
        \xymatrix @R=1.0cm @C=2.1cm {
          \pi_m(S^q \wedge ( (\Omega Y)^+ ) )    \ar[r]^-{\cong}_-{j_*}  \ar[d]_-{E} &
          \Ker_{m,q}(Y)         \ar[d]_-{e} \\
          \pi_{m+1}(S^{q+1} \wedge ( (\Omega Y)^+ ) )     \ar[r]^-{\cong}_-{j_*} &
          \Ker_{m+1,q+1}(Y)
        }
      \end{equation*}
      (cf. Proposition \ref{prop:2.8} and (\ref{equ:2.13}) ) commutes.
    \item \label{thm:2.16b} {\bfseries Naturality.}
      \begin{enumerate}[label=(\roman*)]
        \item Given a base point preserving map
          $\ g\ \colon\ S^{m'} \longrightarrow S^m\ $
          and
          $\ [w] \: \in \: \Ker_{m,q}(Y)\,$,
          we have
          \begin{equation*}
            e( [ w \comp g ] ) \ = \ (e ([w]) ) \comp [E \, g] \: \in \: \Ker_{m'+1,q+1}(Y) \, .
          \end{equation*}
        \item Given base point preserving maps
          $\ g_1 \ \colon \ S^q \longrightarrow S^{q'} \ $
          and
          $\ g_2 \ \colon \ Y \longrightarrow Y' \ $
          between manifolds, the map
          $\ g_1 \vee g_2 \ \colon \ S^q \vee Y \longrightarrow S^{q'} \vee Y' \ $
          induces the commuting diagram
          \begin{equation*}
            \xymatrix @R=1.0cm @C=1.5cm {
              \Ker_{m,q}(Y)     \ar[r]^-{e} \ar[d]^-{ (g_1 \vee g_2)_* } &
              \Ker_{m+1,q+1}(Y)     \ar@<-5.5pt>@{}[r]^-{\subset} \ar[d]^-{ \left( \left( E g_1 \right) \vee g_2 \right)_* } &
              \pi_{m+1}(S^{q+1} \vee Y) \\
              \Ker_{m,q'}(Y')     \ar[r]^-{e} &
              \Ker_{m+1,q'+1}(Y')     \ar@<-5.5pt>@{}[r]^-{\subset} &
              \ \ \pi_{m+1}(S^{q'+1} \vee Y').
            }
          \end{equation*}
      \end{enumerate}
    \item \label{thm:2.16c} {\bfseries Compatibility with Whitehead products.}
    
      Given
      $\ \alpha \, \in \, \pi_m(S^q \vee Y) \,=\, \pi_m(Y) \oplus \Ker_{m,q}(Y) \ $
      and
      $\ \beta \, \in \, \Ker_{m',q}(Y), \ $
      we have:
      \begin{enumerate}[label=(\roman*)]
        \item if
          $\ \alpha \, \in \, \pi_m(Y), \ $
          then
          $\ \ e( [\alpha, \beta] ) \, = \, \pm [\alpha, e(\beta) ] \ \ \in \  \Ker_{m+m',q+1}(Y); \ $
        \item if
          $\ \alpha \, \in \, \Ker_{m,q}(Y), \ $
          then
          $\ \ e( [\alpha, \beta] ) \, = \, 0. \ $
      \end{enumerate}
  \end{enumerate}
\end{thm}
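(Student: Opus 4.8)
The plan is to reduce all three assertions to the explicit geometric models already at hand: Proposition~\ref{prop:2.12} describing $j_*$, Remark~\ref{rem:2.15} describing $e$, and the isomorphism of Proposition~\ref{prop:2.8}. For part~\ref{thm:2.16a}, I would first put a representative $u$ of $[u]\in\pi_m(S^q\wedge((\Omega Y)^+))$ into the standard form~(\ref{equ:2.9}), with coincidence manifold $C=u^{-1}(\{0\}\times\Omega Y)$, normal tube $B^q(3)\times C$, and loop data $\theta_{u(c)}$, so that $j_*[u]=[u']$ with $u'$ as in~(\ref{equ:2.10}) and Figure~\ref{fig:figure2.11}. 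After a deformation the suspension $E[u]$ is represented by a map $\widehat u$ of the same standard form with one extra normal coordinate $x_{m+1}$: the coincidence manifold is again $C\subset\R^m\times\{0\}\subset\R^{m+1}$, the normal ball is $B^{q+1}(3)$, and the loop data are unchanged. Applying Proposition~\ref{prop:2.12} once more gives $j_*(E[u])=[\widehat u']$, with $\widehat u'$ of the form~(\ref{equ:2.10}) over the $(q+1)$-dimensional slices $B^{q+1}(3)\times\{c\}$. It then remains to recognise this very map as a representative of $e([u'])$: this is precisely what the flexible construction of Remark~\ref{rem:2.15} delivers, with $\widehat T_1=B^{q+1}(1)\times C$ and $\widehat W=\widehat u'|_{S^{m+1}\setminus\widehat T_1}$, since $\widehat W$ lands in $Y$, restricts to $u'$ on $S^m\setminus T_1$ (the two formulas agree when $x_{m+1}=0$), sends $\partial\widehat T_1$ to the wedge point, and is symmetric in $x_{m+1}$ because it depends only on $\|v\|$. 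Hence $j_*\circ E=e\circ j_*$.

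For part~\ref{thm:2.16b}(i) I would bring $g$ into general position, transverse to the coincidence manifold $C$ of a standard representative $w$ of $[w]$; then $w\circ g$ carries a standard form with coincidence manifold $g^{-1}(C)$, loop data $\theta_{w(g(c'))}$, and nullhomotopy $W\circ(g\times\id)$ of $p_2\circ(w\circ g)$. As the extra coordinate $x_{m+1}$ plays no role in the construction of $e_W$, that construction is manifestly compatible with precomposition, and the resulting representative of $e([w\circ g])$ is literally $e_W\circ(Eg)$; hence $e([w\circ g])=e([w])\circ[Eg]$. For~(ii) one checks first that $p_2'\circ(g_1\vee g_2)=g_2\circ p_2$, so that $(g_1\vee g_2)_*$ indeed sends $\Ker_{m,q}(Y)$ into $\Ker_{m,q'}(Y')$; the square then follows by combining part~\ref{thm:2.16a} with the routine naturality of diagram~(\ref{equ:2.2}) in the pair $(S^q,Y)$ --- which makes $j$ natural, i.e. $j'\circ(g_1\wedge(g_2)_\sharp)\simeq(g_1\vee g_2)\circ j$ with $(g_2)_\sharp\colon\Omega Y\to\Omega Y'$ induced by $g_2$ --- together with the ordinary naturality of the suspension $E$ (alternatively, by the same slice-by-slice bookkeeping as in part~\ref{thm:2.16a}, after first putting $g_1$ into standard form).

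Part~\ref{thm:2.16c}(ii) is then immediate: by Proposition~\ref{prop:2.8} write $\alpha=j_*\bar\alpha$ and $\beta=j_*\bar\beta$; naturality of the Whitehead product gives $[\alpha,\beta]=j_*[\bar\alpha,\bar\beta]$, and part~\ref{thm:2.16a} yields $e([\alpha,\beta])=j_*(E[\bar\alpha,\bar\beta])=0$, since a suspended Whitehead product vanishes. For~(i), observe first that $p_{2*}[\alpha,\beta]=[\alpha,0]=0$, so $e$ applies. Writing $\beta=j_*\bar\beta$, Proposition~\ref{prop:2.12} supplies the standard representative $\bar\beta'$ of $\beta$, and the construction~(\ref{equ:2.13Strich}) a representative $e_{W_\beta}$ of $e(\beta)$. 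Represent $[\alpha,\beta]$ by $((\incl_Y\circ a)\vee\bar\beta')\circ w_{m,m'}$, where $a$ represents $\alpha$ and $w_{m,m'}\colon S^{m+m'-1}=\partial(D^m\times D^{m'})\to S^m\vee S^{m'}$ is the attaching map of the top cell of $S^m\times S^{m'}$. Because $a$ maps into $Y$, the whole $S^q$-content of this map is concentrated, in an $S^{m-1}$-independent way, along $S^{m-1}\times C_{\bar\beta}$ inside the face $S^{m-1}\times D^{m'}$, with loop data pulled back from $\bar\beta$ and with the $p_2$-nullhomotopy on that face induced by $W_\beta$. Running the $e$-construction produces a representative of $e([\alpha,\beta])$ supported near $S^{m-1}\times C_{\bar\beta}\subset S^{m-1}\times D^{m'}\times\{0\}$ in $S^{m+m'}$, whereas $[\alpha,e(\beta)]$ is represented by $((\incl_Y\circ a)\vee e_{W_\beta})\circ w_{m,m'+1}$, supported near $S^{m-1}\times C_{\bar\beta}\subset S^{m-1}\times D^{m'+1}$.

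I would then identify these two representatives by an ambient isotopy of $S^{m+m'}$ transporting the extra suspension coordinate from being ``last in $\R^{m+m'}$'' to being ``last in the $D^{m'}$-face'', and verify that the surrounding $Y$-parts (both assembled from $W_\beta$) are carried onto one another. This final matching is where I expect the genuine work to lie: it is the coincidence-theoretic shadow of the classical facts $E[\xi,\eta]=0$ and $[\xi,E\eta]=\pm E(\cdots)$, and the main obstacle is making the relocation of the suspension coordinate precise --- in particular reading off the transposition sign it contributes when pushed past the Whitehead-product directions, which is exactly the $\pm$ appearing in part~\ref{thm:2.16c}(i).
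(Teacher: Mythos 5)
Your treatment of parts \ref{thm:2.16a} and \ref{thm:2.16b} matches the paper's proof in substance: the paper likewise suspends a standard--form representative $u$ by adding the coordinate $x_{m+1}$, compares $u'$ with $(Eu)'$ via Proposition \ref{prop:2.12}, and invokes Remark \ref{rem:2.15} with $\widehat{W}=(Eu)'\vert_{S^{m+1}\setminus\widehat{T}_1}$; naturality is dispatched there in one line (``from the way $e$ is defined''), so your more explicit bookkeeping is compatible with it. For \ref{thm:2.16c}(ii) you take a genuinely different and cleaner route: writing $\alpha=j_*\bar{\alpha}$, $\beta=j_*\bar{\beta}$, using naturality of Whitehead products under $j$ together with part \ref{thm:2.16a} to reduce everything to $E([\bar{\alpha},\bar{\beta}])=0$. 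This is correct and shorter than the paper's argument, which instead observes geometrically that the two tubular neighbourhoods carrying $\alpha$ and $\beta$ become unlinked in $\R^{m+m'}$ and can be isotoped to disjoint $x_{m+m'}$--levels, yielding $e([\alpha,\beta])=[0,e(\beta)]+[e(\alpha),0]=0$. (Note your formal trick does not extend to (c)(i), since there $\alpha$ comes from $\incl_{Y*}$ rather than from $j_*$.)

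The one genuine gap is in \ref{thm:2.16c}(i), and you flag it yourself: the ``final matching'' of your representative of $e([\alpha,\beta])$ with a representative of $\pm[\alpha,e(\beta)]$ by an ambient isotopy relocating the suspension coordinate is precisely the step that needs an argument, and your sketch does not supply it (nor the sign it produces). The paper avoids the isotopy altogether by exploiting the flexibility of Remark \ref{rem:2.15}: in the linked--spheres model $T=B^m\times S$, $T'=B^{m'}\times S'$ of $[\alpha,\beta]$, it enlarges the normal bundle of $S'$ to $B^{m'+1}\times S'$ and puts $e_{\widehat{W}}$ (the partial suspension of $\beta$) on its normal slices, while thickening the other core sphere $S$ to an $m'$--sphere $\widehat{S}$ one dimension up and extending $a$ over its normal slices --- legitimate precisely because $\alpha$ lands in $Y$, so no new $S^q$--content is created and the symmetry condition of Remark \ref{rem:2.15} holds automatically. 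One and the same map then visibly satisfies the hypotheses of Remark \ref{rem:2.15} for $e([\alpha,\beta])$ and is simultaneously the standard linked model for $\pm[\alpha,e(\beta)]$. Either make your isotopy and its sign precise, or adopt this device; it is also where the hypothesis $\alpha\in\pi_m(Y)$ genuinely enters.
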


\providecommand{\KoschKursiv}[1]{\ensuremath{\mathit{#1}}}
\begin{proof}
  Given
  $\ [u] \,\in\, \pi_m(S^q \wedge (\Omega Y)^+ ),\ $
  pick a representative
  $\ u\ $
  in standard form (as in (\ref{equ:2.9}) ), based on a map
  $\ g \,\colon\, C \longrightarrow \Omega Y\ $
  and on a framed embedding
  $\ C \subset \R^m. \ $
  Then we can represent the suspension
  $\ E( [u] )\ $
  by a map
  $\ E u\ $
  in standard form, based on the same
  $\ g\ $
  and on the composite embedding
  $\ C \subset \R^m \subset \R^{m+1}. \ $
  Now compare the corresponding maps
  $\ u'\ $
  and
  $\ (E u)'\ $
  in standard form (cf. (\ref{equ:2.10}) and (\ref{equ:2.14}) ) and apply remark \ref{rem:2.15} to
  $\ w \,:=\, u'. \ $
  When we restrict
  $\ (E u)'\ $
  to the complement of the tubular neighbourhood
  $\ \widehat{T}_1\ $
  of
  $\ C\ $
  in
  $\ S^{m+1}, \ $
  we obtain a map
  $\ \widehat{W}\ $
  as in remark \ref{rem:2.15}.
  Thus
  \begin{equation*}
    (e \cdot j_*([u]) \,= ) \quad  e([u']) \,=\, [ (E u)' ] \quad (=\, j_* \comp E([u]) ).
  \end{equation*}
  This establishes our first claim.
  
  Naturality follows similarly from the way
  $\ e\ $
  is defined or from remark \ref{rem:2.15}.
  
  For the proof of our third claim we use the geometric description of Whitehead products suggested e.g. by \cite{W}, Ch. X, (7.1) or Figure 10.2.
  Write
  $\qquad \R^{m+m'-1} \ =\  \R^{m-1} \ \times \ \R \ \times \ \R^{m'-1} \ $
  and let
  $\ S' \subset \R^{m+m'-1} \ $
  denote the unit sphere (with center
  $\ 0\ $)
  of
  $\ \R^{m-1} \times \R \times \lbrace 0 \rbrace, \ $
  framed in the standard fashion by the outward pointing vector and
  $\ \R^{m'-1}. \ $
  Similarly, let
  $\ S \subset \lbrace 0 \rbrace \times \R \times \R^{m'-1} \ $
  be the framed unit sphere (with center
  $\ e_m = (0,1,0) \in S'\ $
  ) of the normal space of
  $\ S'\ $
  at
  $\ e_m. \ $
  Also let
  \begin{equation*}
    T \ :=\ B^m \times S,\quad T' = B^{m'} \times S' \quad \subset \ \R^{m+m'-1}
  \end{equation*}
  denote compact tubular neighbourhoods of
  $\ S\ $
  and
  $\ S',\ $
  resp., parametrized compatibly with the framings and disjoint (but linked).
  
  Now pick representatives \KoschKursiv{a}, \KoschKursiv{b} in standard form (cf. (\ref{equ:2.4}) ) of the homotopy classes
  $\ \alpha, \beta \,\in\, \pi_*(S^q \vee Y).\ $
  Define
  \begin{equation*}
    w_{\KoschKursiv{a}, \KoschKursiv{b}} \, \colon\, S^{m+m'-1} \longrightarrow S^q \vee Y
  \end{equation*}
  on the tubular neighbourhood
  $\ T\ $
  and
  $\ T',\ $
  resp., by composing \KoschKursiv{a} and \KoschKursiv{b}, resp., with the obvious projections (e.g. compose
  $\ \alpha \ $
  with
  \begin{equation*}
    T \,=\, B^m \times S \longrightarrow B^m \diagup \partial B^m \,=\, S^m \text{ )} \, ,
  \end{equation*}
  and on
  $\ S^{m+m'-1} - (T \cup T')\ $
  by the constant map.
  Then
  \begin{equation*}
    [ w_{\KoschKursiv{a}, \KoschKursiv{b}} ] = \pm [ \alpha, \beta ]
  \end{equation*}
  and
  $\ w_{\KoschKursiv{a}, \KoschKursiv{b}}\ $
  is again in standard form.
  
  Now construct
  $\ e([\KoschKursiv{b}]) \,=\, [e_{\widehat{W}} ] \ $
  as in remark \ref{rem:2.15} and consider the map
  \begin{equation*}
    \xymatrix @R=1.0cm @C=1.0cm {
      \widehat{T}' = B^{m'+1} \times S' \ar[r]  &
      S^{m'+1}  \ar[r]^-{e_{\widehat{W}}}       &
      S^q \vee Y
    }
  \end{equation*}
  which extends
  $\ w_{\KoschKursiv{a}, \KoschKursiv{b}} \vert T' \ $
  to a tubular neighbourhood of
  $\ S'\ $
  in
  $\ \R^{m+m'}. \ $
  If \KoschKursiv{\ a\ } maps
  $\ S^m\ $
  fully into
  $\ Y, \ $
  we can also extend
  $\ w_{\KoschKursiv{a}, \KoschKursiv{b}} \vert T \ $
  to a tubular neighbourhood
  $\ \widehat{T} \,=\, B^m \times \widehat{S} \ $
  of the unit sphere
  $\ \widehat{S} \ $
  (around
  $\ e_m\ $)
  in
  $\ \left( \lbrace 0 \rbrace \times \R \times \R^{m'-1} \right) \times \R \ \subset \ \R^{m+m'} \ $
  by applying \KoschKursiv{\ a\ } to each normal slice.
  We get a representative of
  $\ e([\alpha, \beta]) \ $
  (as in remark \ref{rem:2.15}) which represents also
  $\ \pm [ \alpha, e(\beta) ].\ $
  This proves the first part of claim \ref{thm:2.16c}.
  
  If
  $\ \alpha \,\in\, \Ker_{m,q}(Y) \ $
  we do not need all of
  $\ \widehat{S}\ $
  but we can extend both
  $\ w_{a, b} \vert T \ $
  and
  $\ w_{a, b} \vert T' \ $
  to tubular neighbourhoods of
  $\ S, S' \subset \R^{m+m'-1} \ $
  in
  $\ \R^{m+m'}. \ $
  But these tubular neighbourhoods are not linked and can be isotoped to disjoint
  $\ x_{m+m'}$--levels.
  Thus
  \begin{equation*}
    e([\alpha, \beta]) \,=\, [0, e(\beta) ] + [e(\alpha), 0] \,=\, 0.   \qedhere
  \end{equation*}
\end{proof}

\begin{cor}\label{cor:2.17}
  Let
  $\ [w] \,\in\, \pi_*(S^q \vee Y) \ $
  be an iterated Whitehead product with factors in
  $\ \pi_*(Y)\ $
  and with at least one factor purely in
  $\ \pi_*(S^q). \ $
  If
  $\ [w]\ $
  has precisely one factor
  $\ [v] \,\in\, \pi_*(S^q), \ $
  then
  $\ \pm e([w])\ $
  equals the same Whitehead product, but with
  $\ [v]\ $
  replaced by the standard suspension
  $\ E([v]) \,\in\, \pi_*(S^{q+1} ); \ $
  otherwise
  $\ e([w]) \,=\, 0 $.
\end{cor}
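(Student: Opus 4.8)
The plan is to induct on the number $N\ge 2$ of factors of the iterated Whitehead product $[w]$, the essential inputs being Theorem~\ref{thm:2.16}\ref{thm:2.16c} and the remark, made right after (\ref{equ:2.13Strich}), that $e$ restricts to the ordinary suspension $E$ on the subgroup $\pi_*(S^q)\subset\pi_*(S^q\vee Y)$. First I would record an elementary dichotomy: $p_{2*}$ carries an iterated Whitehead product to the corresponding product of the images of its factors, and a Whitehead product with one vanishing entry vanishes; hence any sub-bracket of $[w]$ that contains at least one factor from $\pi_*(S^q)$ lies in $\Ker_{m,q}(Y)=\Ker(p_{2*})$, while one all of whose factors come from $\pi_*(Y)$ lies in the complementary summand $\pi_*(Y)$. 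Thus every sub-bracket of $[w]$ lies in exactly one of the two summands of $\pi_m(S^q\vee Y)=\pi_m(Y)\oplus\Ker_{m,q}(Y)$, which is precisely the dichotomy under which Theorem~\ref{thm:2.16}\ref{thm:2.16c} is stated; in particular $[w]$ itself lies in $\Ker_{*,q}(Y)$, so $e([w])$ is defined and the assertion makes sense. I would also use graded commutativity $[w_1,w_2]=\pm[w_2,w_1]$ freely, absorbing the resulting signs into the ``$\pm$'' of the statement.

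For the induction step write the outermost bracket as $[w]=[w_1,w_2]$, with $w_1,w_2$ sub-brackets (or single factors) having $N_1+N_2=N$ factors and $1\le N_i<N$. After a commutativity swap I may assume a factor from $\pi_*(S^q)$ occurs in $w_2$, so $w_2\in\Ker_{*,q}(Y)$ and Theorem~\ref{thm:2.16}\ref{thm:2.16c} applies with $\alpha=w_1$, $\beta=w_2$. If $w_1$ also contains a factor from $\pi_*(S^q)$, then $w_1\in\Ker_{*,q}(Y)$, part (ii) gives $e([w])=0$, and indeed $[w]$ then has at least two $S^q$-factors --- the ``otherwise'' case. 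If instead all factors of $w_1$ lie in $\pi_*(Y)$, then $w_1\in\pi_*(Y)$, part (i) gives $e([w])=\pm[w_1,e(w_2)]$, and the $S^q$-factors of $[w]$ are exactly those of $w_2$.

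It then remains to evaluate $e(w_2)$. If $w_2$ is a single factor $[v]\in\pi_*(S^q)$, then $e(w_2)=E([v])$ by the remark after (\ref{equ:2.13Strich}), so $e([w])=\pm[w_1,E([v])]$ --- the product $[w]$ with its unique $S^q$-factor suspended. If $w_2$ is a proper sub-bracket ($N_2\ge 2$, with at least one $S^q$-factor), the induction hypothesis applies to it: when $w_2$ has exactly one $S^q$-factor $[v]$, $\pm e(w_2)$ is $w_2$ with $[v]$ replaced by $E([v])$, whence $\pm e([w])$ is $[w_1,w_2]$ with $[v]$ replaced by $E([v])$ (replacing a nested factor commutes with forming the surrounding bracket); when $w_2$ has at least two $S^q$-factors, $e(w_2)=0$, hence $e([w])=\pm[w_1,0]=0$, again consistent with the $S^q$-factor count of $[w]$. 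The base case $N=2$ is this same computation with $w_1,w_2$ single factors, using the remark after (\ref{equ:2.13Strich}) in place of the induction hypothesis (and the degenerate case $N=1$ is exactly that remark).

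This is bookkeeping rather than mathematics, so I do not expect a genuine obstacle. The one point that needs care is the case analysis: one must keep track of which summand each sub-bracket lies in, so that Theorem~\ref{thm:2.16}\ref{thm:2.16c} is applied legitimately, and one must check that the ``exactly one $S^q$-factor'' versus ``at least two $S^q$-factors'' alternative is inherited correctly by the sub-bracket $w_2$ to which the induction hypothesis is applied. Signs need not be tracked, since the statement only claims the conclusion up to sign.
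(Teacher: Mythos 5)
Your proposal is correct and follows essentially the same route as the paper, whose entire proof reads ``This follows by applying theorem \ref{thm:2.16}\ref{thm:2.16c} and the anticommutativity of Whitehead products repeatedly''; you have simply made the induction explicit, together with the two supporting observations (the $p_{2*}$-dichotomy placing each sub-bracket in $\pi_*(Y)$ or $\Ker_{*,q}(Y)$, and the fact that $e$ restricts to $E$ on $\pi_*(S^q)$) that the paper leaves implicit. No gaps.
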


\begin{proof}
  This follows by applying theorem \ref{thm:2.16}\ref{thm:2.16c} and the anticommutativity of Whitehead products repeatedly.
\end{proof}

\section{The coincidence invariants $\ \boldsymbol{\omega_r }\ $ and Hopf--Ganea homomorphisms.}\label{sec:3}

\indent

In this section we discuss our $\omega$--invariants and their interpretation -- via the isomorphism
$\ j_* \ $
(cf. proposition \ref{prop:2.8}) -- in terms of suspensions and Hopf--Ganea invariants.

Assume that
$\ n \geq 2 \, $.
Fix a local orientation of the $n$-manifold
$\ Y \ $
at its basepoint
$\ y_0 \ $
and an embedded path
$\ \gamma \ $
in
$\ Y \ $
from
$\ y_0 \ $
to some point
$\ * \; \in \; Y \, ,\ * \neq y_0 \,$.
(Constant maps with value
$\ * \ $
will also be denoted by
$\ * \,$).
Then, given
$\ [f_1] \; \in \; \pi_m(Y, *) \ $
and
$\ [f_2] \; \in \; \pi_m(Y, y_0) \, $,
we can use the Pontryagin--Thom procedure to interpret
$\ \omega^{\#}(f_1, f_2) \; = \; \omega_0(f_1, f_2) \ $
(cf. \ref{equ:1.6}) as an element in
$\ \pi_m(S^n \wedge (\Omega Y)^+) \ $
(cf. \cite{K3},
proposition 2.5); more generally,
\begin{equation}\label{equ:3.1}
  \omega_r(f_1, f_2) \; = \; E^r(\omega^{\#}(f_1, f_2)) \; \in \; \pi_{m+r}(S^{n+r} \wedge (\Omega Y)^+) \, , \ r = 0, 1, \ldots, \infty \, ,
\end{equation}
(compare \ref{equ:1.6} and \ref{equ:3.1Strich} below) where
$\ E \ $
denotes the suspension homomorphism.
In addition
\begin{equation}\label{equ:3.2}
  \omega_r(f_1 + f_1', f_2 + f_2') \; = \; \omega_r(f_1, f_2) + \omega_r(f_1', f_2')
\end{equation}
for all
$\ [f_1], [f_1'] \; \in \; \pi_m(Y, *) \, , \ [f_2], [f_2'] \; \in \; \pi_m(Y, y_0) \ $
(cf. \cite{K3}, 6.1).

Let us describe these invariants more explicitly in the case when
$\ f_1 \; \equiv \; * \, $.
Given
$\ [f] \; \in \; \pi_m(Y, y_0), \ $
we may assume that
$\ f \; \colon \; S^m \;\longrightarrow\; Y \ $
is smooth with regular value
$\ * $.
Then
$\ \omega^{\#}(*, f)\ $
is the {\itshape nonstabilized} bordism class of the triple
$\ (\, C, \, \widetilde{g},\, \bar{g}^{\#} \,) \ $
consisting of
\begin{enumerate}[label=(\roman*)]
  \item the embedded smooth submanifold
    $\ C \; := \; f^{-1}( \lbrace * \rbrace ) \ $
    of
    $\ \R^m \ \subset \ \R^m \cup \ \lbrace \infty \rbrace \ =\ S^m \,$;
  \item the map
    $\ \widetilde{g}\ $
    from
    $\ C \ $
    to the loop space
    $\ \Omega Y \ $
    defined as follows: pick a 
    homotopy
    $\ G \; \colon \; C \times I \longrightarrow S^m \ $
    from the inclusion
    $\ C \; \subset \; S^m \ $
    to the constant map with value
    $\ \infty \ $
    (= the basepoint of
    $ \; S^m \,$).
    Then
    $\ \widetilde{g}(x) \ $
    is the (concatenated) loop
    $\ \gamma \, \cdot \, f\left(G \,\left(x,-\right)\right) ,\; x \; \in \; C $;
  \item
    $ (-1) \cdot \bar{g}^{\#}\ $
    is the isomorphism from the normal bundle of
    $\ C \ $
    in
    $\ \R^m \ $
    to the trivial bundle (over
    $\ C \;$)
    with fiber
    $\ T_*(Y) \ \cong \ T_{y_0}(Y) \ \cong\ \R^n ,\ $
    induced by the tangent map of
    $\ f \ $
    and the chosen path
    $\ \gamma \,$.
\end{enumerate}

We use the Pontryagin--Thom
procedure to identify the group of nonstabilized bordism classes of such triples with the homotopy group
$\ \pi_m(S^n \wedge \, (\,( \Omega Y )^+ ) \,) \ $
(for more details see \cite{K3}).

If we forget about embeddings and consider
$\ C \ $
only as an {\itshape abstract}
$(m-n)$--dimensional manifold, equipped with the map
$\ \widetilde{g} \ $
and with the {\itshape stable} framing determined by
$\ \bar{g}^{\#} \, $,
we obtain the framed bordism class
\renewcommand{\theequation}{\arabic{section}.\arabic{equation}'}
\addtocounter{equation}{-2}
\begin{equation}\label{equ:3.1Strich}
  \omega_{\infty}(*, f) \; = \; [ C, \, \widetilde{g},\, \bar{g}_{\infty} ] \; \in \; \Omega^{fr}_{m-n}(\Omega Y) \; = \; \lim_{r \rightarrow \infty} \pi_{m+1}(S^{n+r} \wedge (\Omega Y)^+ )
\end{equation}
\numberwithin{equation}{section}
\addtocounter{equation}{1}
which was discussed in detail in \cite{K2}.

Now let
$\ B \subset Y \ $
be a smoothly embedded compact $n$--ball with center point
$\ * \ $
such that
$\ y_0 \ $
lies in the boundary sphere
$\ \partial B \,$
and
$\ B \ $
contains the image of
$\ \gamma \,$.
We obtain a pinching map
\begin{equation*}
  \pinch \, \colon \ Y \; = \; (Y- \mathring{B} ) \cup_{\partial B} B \ \longrightarrow \ Y \diagup \partial B \cong S^n \,\vee \, Y
\end{equation*}
which collapses
$\ \partial B \ $
to a point.

\begin{thm}\label{thm:3.3}
  If
  $\ Y\ $
  is a simply connected, oriented $n$--dimensional manifold,
  $\ n \geq 2 ,\ $
  the diagram of homomorphisms
  \begin{equation*}
    \xymatrix @R=1.0cm @C=2.1cm {
    \pi_m(Y)	\ar[d]_-{\omega^{\#}(*,-)}	\ar[rd]^-{\qquad\qquad\quad \pinch_* - \incl_{Y*} \circ\; p_{2_*} \circ\; \pinch_*}	&
    \\
    \pi_m( S^n \wedge ((\Omega Y)^+ ))	\ar[r]_-{j_* \comp \kappa}^-{\cong}	&
    \Ker_{m,n}(Y) \; \subset \; \pi_m (S^n \vee Y )
    }
  \end{equation*}
  (compare \ref{equ:2.7} and \ref{prop:2.8}) commutes.
  (Here
  $\ \kappa \ $
  denotes the involution induced by
  $\ (-1) \cdot$~
  identity map on
  $\ \R^n \; \subset \; S^n \; = \; \R^n \cup \lbrace \infty \rbrace \, $.)
\end{thm}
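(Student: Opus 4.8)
The plan is to recognize the right-hand map as a projection and then to compute it geometrically. Since the pinching comultiplication satisfies the counit law $p_2 \circ \pinch \simeq \id_Y$, we have $p_{2*} \circ \pinch_* = \id_{\pi_m(Y)}$, so for $[f] \in \pi_m(Y)$ the composite $\pinch_* - \incl_{Y*}\circ p_{2*}\circ\pinch_*$ sends $[f]$ to $\pinch_*[f] - \incl_{Y*}[f]$. Under the splitting $\pi_m(S^n \vee Y) = \incl_{Y*}(\pi_m(Y)) \oplus \Ker_{m,n}(Y)$ this is precisely the component of $\pinch_*[f] = [\pinch \circ f]$ lying in $\Ker_{m,n}(Y)$. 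Hence Theorem~\ref{thm:3.3} is equivalent to the assertion that $j_*(\kappa(\omega^{\#}(*,f)))$ equals this $\Ker_{m,n}(Y)$-component.

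First I would fix a smooth representative $f$ having $*$ as a regular value and choose all the geometric data coherently: a tubular neighbourhood $B^n(2) \times C$ of $C = f^{-1}(\{*\})$ in $\R^m \subset S^m$ on which $f$ is the projection onto $B^n(2)$, identified with the ball $B$ by means of the chosen local orientation of $Y$ at $*$, and with $f$ carrying the complement into $Y \setminus \mathring{B}$; then I would build the pinching map using the concentric inner ball $B_0 \cong B^n(1)$. With these choices $\pinch \circ f$ has the explicit form $(v,c) \mapsto [v] \in B^n(1)/\partial B^n(1) = S^n$ for $\|v\| \le 1$, $(v,c) \mapsto$ (the radial reparametrisation of $v$, lying in $B \subset Y$) for $1 \le \|v\| \le 2$, and $x \mapsto f(x) \in Y \setminus \mathring{B}$ on the complement of the tube. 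On the other hand, by item~(iii) of the description preceding Theorem~\ref{thm:3.3} the involution $\kappa$ replaces $\bar{g}^{\#}$ by the framing of $C$ induced directly by the tangent map of $f$; substituting the resulting triple $(C, \widetilde{g}, df)$ into the geometric formula for $j_*$ from Proposition~\ref{prop:2.12} produces the map $u'$ supported on $B^n(2) \times C$ with $(v,c) \mapsto [v] \in S^n$ for $\|v\| \le 1$, $(v,c) \mapsto \widetilde{g}(c)(\|v\| - 1) \in Y$ for $1 \le \|v\| \le 2$, and $x \mapsto$ wedge point otherwise, where $\widetilde{g}(c) = \gamma \cdot f(G(c,-))$ for a contraction $G$ of the inclusion $C \subset S^m$ to the constant map $\infty$.

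It then remains to exhibit a homotopy $\pinch \circ f \simeq u' + (\incl_Y \circ f)$ in $\pi_m(S^n \vee Y)$, since this gives $[u'] = \pinch_*[f] - \incl_{Y*}[f]$, as desired. The two maps already agree with $v \mapsto [v]$ on the inner core $B^n(1) \times C$, and outside this core both map into $Y$; on the core's complement $\pinch \circ f$ first crushes the annulus $B^n(2) \setminus \mathring{B}^n(1)$ radially onto $B$ (dragging $\partial B^n(1) \times C$ to $*$) and then follows $f$. Thickening the contraction $G$ to an ambient deformation of the tube $B^n(2) \times C$ onto its core, one identifies this complementary map, rel $\partial(B^n(1) \times C)$, with the sum of the radial loop factor $c \mapsto \gamma \cdot f(G(c,-)) = \widetilde{g}(c)$ --- which is exactly the annular part of $u'$ --- together with a copy of $\incl_Y \circ f$ pushed off the core; invoking the co-$H$ addition on $S^m$ and the independence (proved in \cite{K3}) of $\widetilde{g}$ from the choice of $G$, one may take $G$ adapted to the pinch construction so that this identification is genuine.

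I expect this last step to be the main obstacle: the crux is to verify that the loop which the pinch map records while radially crushing the annulus and then running along $f \circ G$ coincides, with the correct orientation conventions, with the concatenated loop $\gamma \cdot f(G(c,-))$ defining $\widetilde{g}$ --- in particular, that the sign absorbed by $\kappa$ (the $(-1)$ of item~(iii)) is exactly the one needed to make the framings match. Bordism invariance of $\omega^{\#}$ and the freedom in choosing $G$ and $\gamma$ (all established in \cite{K3}) are what make this matching rigorous.
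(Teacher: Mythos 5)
Your reduction is sound: since $p_2\comp\pinch\;\sim\;\id_Y$, the diagonal arrow sends $[f]$ to the $\Ker_{m,n}(Y)$--component of $\pinch_*[f]$, and by Proposition~\ref{prop:2.12} the theorem becomes the single assertion $\pinch\comp f\;\sim\;u'+\incl_Y\comp f$. Your explicit descriptions of both sides are also correct. But the proof of that one homotopy is exactly where the entire content of the theorem sits, and the mechanism you offer for it does not work as stated. The null-homotopy $G$ of the inclusion $C\subset S^m$ is a homotopy, not an isotopy, so it cannot be ``thickened to an ambient deformation of the tube onto its core.'' More importantly, the two maps you must match are supported quite differently: the $Y$--part of $u'$ lives on the annulus $\left(B^n(2)\setminus\mathring{B}^n(1)\right)\times C$ alone and records the loops $\widetilde{g}(c)=\gamma\cdot f\left(G(c,-)\right)$, which depend on the global behaviour of $f$ along the contraction $G$; the $Y$--part of $\pinch\comp f$ lives on all of $S^m\setminus\left(\mathring{B}^n(1)\times C\right)$ and records short radial paths in $B$ continued by the restriction of $f$ to the complement of the tube, which may wander over all of $Y$. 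There is no slice-wise relation between these families of paths; comparing them is a global matter involving all of $f\vert_{S^m\setminus T}$, and it is precisely the content of the results you would be leaning on (\cite{K3}, Lemma~7.1 and Theorem~7.2). As written, the crux of your argument is circular, or at best deferred.

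It is worth noting that the paper does not attempt this direct homotopy at all. For general $Y$ it simply cites \cite{K3}, Theorem~7.2; for $Y=S^n$ it gives a genuinely different argument phrased entirely in terms of framed links: $\pinch_*[f]$ is represented by two parallel copies $C\amalg C'$ of $f^{-1}(\lbrace *\rbrace)$ inside the tube, $C'$ is pushed to infinity along a large vector $v_0$, the trace of the push meets $C\times I$ transversely in a manifold $K$, and surgering out small balls around $K$ yields a framed bordism (in the complement of $C$) from $C'$ to the disjoint union of the unlinked copy $C''$ (representing $\incl_{Y*}\comp p_{2*}\comp\pinch_*[f]$) with a manifold $\widehat{C}$ parallel to $C$; the link $C\amalg\widehat{C}$ is then identified with $j_*\comp\kappa\left(\omega^{\#}(*,f)\right)$ via Proposition~\ref{prop:2.12}. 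To salvage your route you would need either to carry out the lifting construction of Proposition~\ref{prop:2.12} explicitly for $\pinch\comp f-\incl_Y\comp f$ (building the required null-homotopy of its projection to $Y$ out of $G$ and tracking the resulting paths into the fiber $F$), or to switch to this bordism formulation.
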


\begin{proof}
  The corresponding result for general
  $\ Y \ $
  (with arbitrary finite fundamental group) was established in \cite{K3}, theorem 7.2.
  Here we give a different geometric proof for the special case
  $\ Y \, = \, S^n \ $
  (which is relevant for spherical space forms).
  
  Given
  $\ [f] \; \in \; \pi_m(S^n) ,\ $
  we may assume that
  $\ f \ $
  has the following standard form: there exists a smoothly embedded tubular neighbourhood
  \begin{equation*}
    T \; = \; C \times B^n \ \subset \ \R^m \ \subset \ S^m
  \end{equation*}
  such that
  $\ f \vert T \ $
  is the projection
  \begin{equation*}
    C \times B^n \; \longrightarrow \; B^n \diagup \partial B^n \; = \; S^n \; = \; \R^n \cup \lbrace \infty \rbrace
  \end{equation*}
  and
  $\ f(x) \ = \ y_0 \ = \ \infty\ $
  for all
  $\ x \; \notin \; T $.
  In the spirit of Pontryagin--Thom we may interpret
  $\ \pinch_*([f]) \ $
  by the framed link
  $\ C \; \nonComm \; C' \, \subset \R^m \ $
  consisting of the (neighbouring "parallel") components
  $\ C \; = \; C \times \lbrace 0 \rbrace \; = \; f^{-1}( \lbrace * \rbrace ) \ $
  and
  $\ C' \; =\; C \times \lbrace z_0 \rbrace \ $
  for some
  $\ z_0 \; \in \;\mathring{B}^n \setminus \lbrace 0 \rbrace $.
  Then
  $\ \incl_{Y*} \; \comp \; p_2 \; \comp \; \pinch_*([f]) \ $
  corresponds to the translated framed submanifold
  $\ C'' \; = \; C' + v_0 \; \subset \; \R^m ,\ $
  pushed away by some big vector
  $\ v_0 \; \in \; \R^m $,
  so that it does not link with
  $\ C \times B^n \ $
  anymore.
  
  Consider the homotopy
  \begin{equation}\label{equ:3.4}
    G' \; \colon \; C' \times I \; \longrightarrow \; \R^m , \quad G'(x, t) \; := \; x + t v_0 ,
  \end{equation}
  and the embedding
  \begin{equation*}
    E  \; \colon \; C' \times I \; \subset \R^m \times I , \qquad E(x, t) \; := \; ( x + t v_0, t ) ,
  \end{equation*}
  $\ ( x, t ) \; \in \; C' \times I $.
  We may assume that
  $\ * \; \in \; S^n \ $
  is a regular value of
  $\ f \comp G' \ $
  so that
  $\ E( C' \times I )\ $
  intersects
  $\ C \times I \ $
  transversely in an embedded submanifold
  \begin{equation*}
    K \; \subset \; C \times ( 0, 1 ) \; \subset \; \R^m \times ( 0, 1 ) \, .
  \end{equation*}
  Pick
  $\ \delta > 0 \ $
  such that the $\delta$--neighbourhood of
  $\ K \ $
  is still embedded, and so is the
  $\ \delta \cdot t$--neighbourhood of
  $\ (c, t) \; \in \; K \, ,$
  growing as
  $\ t > 0 \ $
  increases from one intersection of
  $\ E \ $
  with
  $\ \lbrace c \rbrace \times I \ $
  to a higher one, for any
  $\ c \; \in \; C \, $.
  Now remove the
  $\ \delta \cdot t$--ball
  $\ B' \; \subset \; E(C' \times I ) \ $
  around each point
  $\ (c, t ) \; \in \; K \ $
  and replace it by the cylinder
  $\ \partial B' \times [ t, 1 ] \; \subset \; \R^m \times I \, $.
  \addtocounter{figure}{4}
  \begin{figure}[htb]
    \centering
    \input{./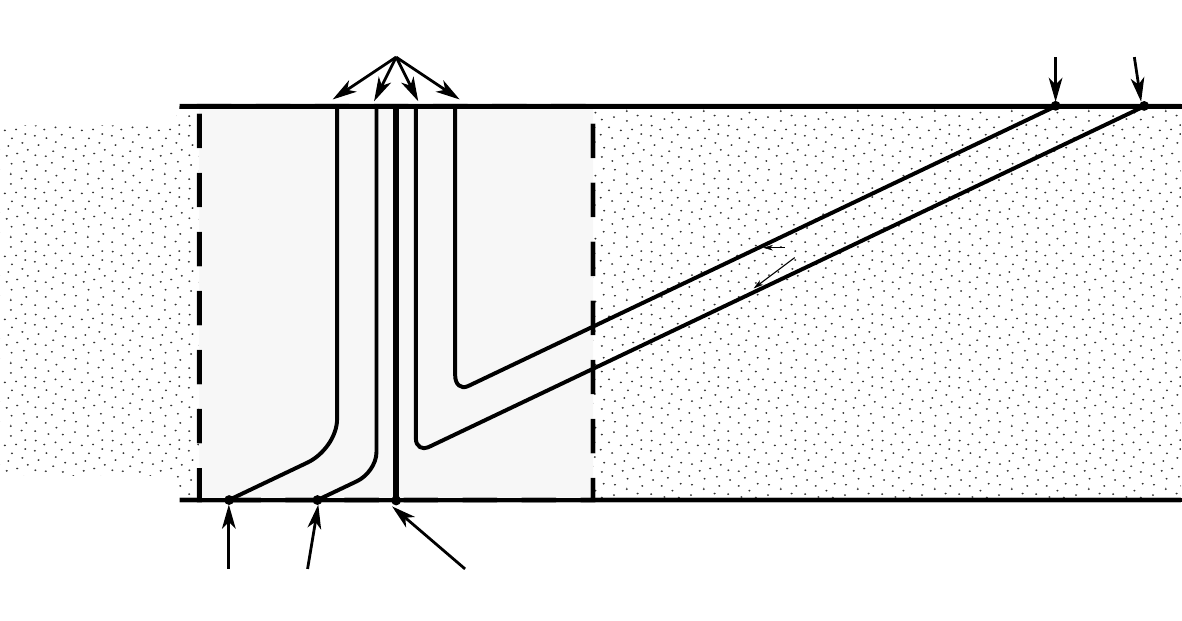_tex}
    \caption{The bordism which proves theorem \ref{thm:3.3} in case $\, Y \ =\ S^n$.}
    \label{fig:figure3.3alt}
  \end{figure}\stepcounter{equation}
  After smoothing corners we obtain an embedded framed bordism in
  $\ \left( \R^m \setminus C \right) \times I \ $
  from
  $\ C' \times \lbrace 0 \rbrace \ $
  to the disjoint union of
  $\ C' \times \lbrace 1 \rbrace \ $
  with a framed submanifold
  $\ \widehat{C} \; \subset \; \R^m \times \lbrace 1 \rbrace \ $
  which lies in the $\delta$--neighbourhood of
  $\ C \times \lbrace 1 \rbrace \ $
  (see figure \ref{fig:figure3.3alt}).
  But the link
    $\ C \; \nonComm \; \widehat{C} \, \subset \R^m \ $
  represents
  $\ j_* \comp \kappa (\omega^{\#}(*, f)) \,$.
  This follows from proposition \ref{prop:2.12} if we construct
  $\ \omega^{\#}(*, f) \ $
  using the homotopy
  $\ G' \ $
  (cf. (\ref{equ:3.4})) as well as the straight path between
  $\ 0 \ $
  and
  $\ z_0 \ $
  in
  $\ B^n \ $
  and a local isotopy along this path.
  Therefore
  $\ \pinch_*([f]) \ = \ \incl_{Y*} \, \comp \, p_{2*} \, \comp \, \pinch_*([f]) \, + \, j_* \comp \kappa (\omega^{\#}(*, f)) $.
\end{proof}

For all
$\ m, q \geq 1 \ $
there exists a canonical decomposition
\begin{equation}\label{equ:3.6}
  \Ker_{m, q}(Y) \; = \; \pi_m(S^q) \, \oplus \, \pi_m( S^q \; \flat \; Y )
\end{equation}
where
$\ S^q \; \flat \; Y \ $
denotes the homotopy fiber of the inclusion
$\ S^q \vee Y \; \subset \; S^q \times Y \ $
(cf. [G](9) and \cite{CLOT}, 6.7).

\providecommand{\KoschScript}[1]{\ensuremath{\mathit{#1}}}
\begin{cor}\label{cor:3.7}
  Let
  $\ B \ $
  be a compact $n$--ball embedded in the $1$--connected oriented manifold
  $\ Y \ $
  (as in \ref{thm:3.3}).
  For all
  $\ [f] \; \in \; \pi_m(Y) \ $
  \begin{equation*}
    j_* \comp \kappa(\omega^{\#}(*, f)) \ 
    = \ \left( \left[ \coll \comp f \right], \, H_{\KoschScript{C}}(f) \right) \; 
    \in \; \pi_m(S^n) \, \oplus \, \pi_m(S^n \; \flat \; Y )
  \end{equation*}
  where
  $\ b \ $
  and
  $\ H_{\KoschScript{C}}\ $
  are defined as a (\ref{equ:1.8}) and (\ref{equ:1.9}).
\end{cor}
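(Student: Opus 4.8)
The plan is to obtain Corollary~\ref{cor:3.7} by substituting Theorem~\ref{thm:3.3} into the splitting~(\ref{equ:3.6}) and reading off the two components. By Theorem~\ref{thm:3.3}, for every $[f]\in\pi_m(Y)$,
\[
  j_*\circ\kappa\bigl(\omega^{\#}(*,f)\bigr)\;=\;\pinch_*([f])\;-\;\incl_{Y*}\bigl(p_{2*}(\pinch_*([f]))\bigr)\;\in\;\Ker_{m,n}(Y);
\]
the involution $\kappa$ now plays no further role, being already absorbed in this identity. So it suffices to compute the $\pi_m(S^n)$- and $\pi_m(S^n\flat Y)$-components of the right-hand side with respect to~(\ref{equ:3.6}).

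For the $\pi_m(S^n)$-component I would use that it is cut out by the collapsing map $p_1\colon S^n\vee Y\to S^n$ (kill the $Y$-summand), which annihilates $\incl_Y$; hence that component equals $p_{1*}(\pinch_*([f]))=[(p_1\circ\pinch)\circ f]$. Now $p_1\circ\pinch\colon Y\to Y/\partial B\to Y/(Y\setminus\mathring{B})=B/\partial B$ is, up to the orientation-determined identification $B/\partial B\cong S^n$ fixed in Theorem~\ref{thm:3.3}, exactly the collapsing map $b$ of~(\ref{equ:1.8}) (recall $\widetilde{Y}=Y$ since $Y$ is simply connected); thus the $\pi_m(S^n)$-component is $[b\circ f]$. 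Also, since $B$ is contractible, $p_2\circ\pinch\simeq\id_Y$, so the correction term is simply $\incl_{Y*}([f])$; in the decomposition $\pi_m(S^n\vee Y)=\pi_m(S^n)\oplus\pi_m(Y)\oplus\pi_m(S^n\flat Y)$ underlying~(\ref{equ:3.6}) this affects only the $\pi_m(Y)$-coordinate, leaving the $\pi_m(S^n)$- and $\pi_m(S^n\flat Y)$-components of $\pinch_*([f])$ untouched.

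It then remains to identify the $\pi_m(S^n\flat Y)$-component of $\pinch_*([f])$ — namely $\pinch_*([f])-\incl_{S^n*}([b\circ f])-\incl_{Y*}([f])$, lifted through the fibration $S^n\vee Y\hookrightarrow S^n\times Y$ — with $H_{\KoschScript{C}}(f)$ of~(\ref{equ:1.9}). The key point is that $Y$ is the mapping cone of the cofibration $\KoschScript{C}\colon\partial B\hookrightarrow \widetilde{Y}\setminus\mathring{B}$ (fill the ball $B=C(\partial B)$ back in), that $Y/(\widetilde{Y}\setminus\mathring{B})=B/\partial B\cong S^n$ is the ensuing cofiber-of-the-cofiber, and that $\pinch$ is homotopic to the standard coaction $Y\to Y\vee S^n$ attached to this mapping cone. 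Granting this, the displayed difference is precisely the obstruction measuring how $\pinch\circ f$ fails to split as the wedge of its two coordinate maps $b\circ f$ and $f$, which is the defining formula for the Hopf--Ganea invariant based on $\KoschScript{C}$ in the sense of Ganea and of \cite{CLOT}, 6.44--6.45. Carrying out this last step — unwinding \cite{CLOT}'s definition of the Ganea--Hopf invariant for the particular cofibration $\KoschScript{C}$, checking that its homotopy fiber really is $S^n\flat Y$ with compatible basepoint data, and making sure no spurious sign enters — is the one genuinely nontrivial point; the rest is formal bookkeeping with the splittings above. Putting the two components together yields $j_*\circ\kappa(\omega^{\#}(*,f))=([b\circ f],H_{\KoschScript{C}}(f))$, as claimed.
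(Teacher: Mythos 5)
Your derivation is correct and is exactly the route the paper intends: the paper offers no independent argument for Corollary \ref{cor:3.7}, deferring to \cite{K3}, (63)--(65), and what you write --- reading off the two components of the right-hand side of Theorem \ref{thm:3.3} under the splitting (\ref{equ:3.6}), identifying the $\pi_m(S^n)$--factor via $p_{1*}$ with $[\coll \comp f]$ and the remaining cross-term of $\pinch_*([f])$ with the Hopf--Ganea invariant of the coaction attached to the cofibration $\KoschScript{C}$ --- is precisely that argument. The single step you rightly single out as nontrivial (matching the $\pi_m(S^n \flat Y)$--component with the definition in \cite{CLOT}, 6.44--6.45, including basepoint and sign conventions) is the content of the cited passage of \cite{K3}, so nothing essential is missing.
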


Thus our basic coincidence invariant
$\ \omega^{\#}(*, f) \ $
turns out to be an enriched Hopf--Ganea invariant.
For a proof and further details see \cite{K3}, (63)--(65).

\begin{cor}\label{cor:3.8}
  For all
  $\ r \, = \, 0, 1, \ldots, \infty \ $
  and
  $\ [f] \; \in \; \pi_m(Y) \ $
  we have
  \begin{equation*}
    j_*(E^r(\kappa(\omega^{\#}(*, f) ))) \ 
    = \ \left( \, E^r( \left[ \coll \comp f \right]\right), \, e^r( H_{\KoschScript{C}}(f) )\, ) \ 
    \in \ \pi_{m+r}( S^{n+r} ) \, \oplus \, \pi_{m+r}( S^{n+r} \; \flat \; Y ) \, .
  \end{equation*}
  
  In particular,
  $\ \omega_r(*,f) \; = \; 0 \ $
  if and only if
  $\ E^r([b \comp f]) \; = \; 0 \ $
  and
  $\ e^r(H_{\KoschScript{C}}(f)) \; = \; 0 $.
\end{cor}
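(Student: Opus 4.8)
The plan is to bootstrap from Corollary~\ref{cor:3.7}, which is precisely the case $\ r=0\ $, using the compatibility of the partial suspension with the isomorphism $\ j_*\ $. By Theorem~\ref{thm:2.16}\ref{thm:2.16a} we have $\ j_* \comp E \; = \; e \comp j_*\ $ on $\ \pi_m(S^n \wedge ((\Omega Y)^+))\ $, and iterating this identity $\ r\ $ times gives
\[
  j_*\bigl( E^r(\kappa(\omega^{\#}(*,f))) \bigr) \;=\; e^r\bigl( j_*(\kappa(\omega^{\#}(*,f))) \bigr) \;=\; e^r\bigl( [\coll \comp f],\, H_{\KoschScript{C}}(f) \bigr),
\]
where the last equality is Corollary~\ref{cor:3.7}. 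It then remains to compute $\ e^r\ $ on each summand of the canonical decomposition $\ \Ker_{m,n}(Y) = \pi_m(S^n) \oplus \pi_m(S^n \flat Y)\ $ of~\ref{equ:3.6}.

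Here I would invoke that $\ e\ $ preserves the splitting~\ref{equ:3.6}: on the subgroup $\ \pi_m(S^n)\ $ the partial suspension $\ e\ $ is the ordinary Freudenthal suspension $\ E\ $ (as noted right after~\ref{equ:2.13Strich}), while on the complementary summand $\ \pi_m(S^n \flat Y)\ $ it restricts to the homomorphism $\ e\ $ of~\ref{equ:1.10} --- either by construction of the latter, or as a consequence of the naturality of $\ e\ $ (Theorem~\ref{thm:2.16}\ref{thm:2.16b}) applied to the two collapse maps $\ S^n \vee Y \to S^n\ $ and $\ S^n \vee Y \to Y\ $, which together realize the projections onto the factors of the splitting. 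Hence
\[
  e^r\bigl( [\coll \comp f],\, H_{\KoschScript{C}}(f) \bigr) \;=\; \bigl( E^r([\coll \comp f]),\; e^r(H_{\KoschScript{C}}(f)) \bigr),
\]
which is the asserted formula for finite $\ r\ $. The case $\ r=\infty\ $ then follows by passing to the direct limit over $\ r\ $, all the groups, isomorphisms and homomorphisms involved being compatible with the suspension maps of the directed systems.

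For the "in particular" clause, recall that $\ \omega_r(*,f) = E^r(\omega^{\#}(*,f))\ $ by~\ref{equ:3.1}, that $\ j_*\ $ is an isomorphism onto $\ \Ker_{m+r,n+r}(Y)\ $ by Proposition~\ref{prop:2.8}, and that $\ \kappa\ $ is an involution compatible with iterated suspension up to sign; in particular $\ E^r(\omega^{\#}(*,f)) = 0\ $ if and only if $\ E^r(\kappa(\omega^{\#}(*,f))) = 0\ $ if and only if $\ j_*(E^r(\kappa(\omega^{\#}(*,f)))) = 0\ $. By the direct-sum decomposition just used, the latter element vanishes exactly when both of its components do, i.e.\ when $\ E^r([\coll \comp f]) = 0\ $ and $\ e^r(H_{\KoschScript{C}}(f)) = 0\ $.

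The only step that is not entirely formal is the identification, in the second paragraph, of $\ e\ $ restricted to $\ \pi_m(S^n \flat Y)\ $ with the partial suspension $\ e\ $ of~\ref{equ:1.10}, together with the fact that $\ e\ $ respects the direct-sum splitting~\ref{equ:3.6} at all; this is exactly where the naturality statements of Theorem~\ref{thm:2.16}\ref{thm:2.16b} are used. Everything else --- the iteration of Theorem~\ref{thm:2.16}\ref{thm:2.16a}, the direct-limit argument for $\ r=\infty\ $, and the elementary bookkeeping with isomorphisms in the last paragraph --- is routine.
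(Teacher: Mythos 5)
Your proposal is correct and follows essentially the same route as the paper: the paper's own (very terse) proof likewise reduces everything to the injectivity of $j_*$ and its compatibility with suspensions from Theorem \ref{thm:2.16}\ref{thm:2.16a}, together with the observation that $E^r \comp \kappa \comp \omega^{\#}$ differs from $\omega_r$ only by an involution of the target. The extra care you take in checking that $e$ respects the splitting \ref{equ:3.6} is a detail the paper leaves implicit in the definition of the $e^r$ of \ref{equ:1.10}, but it is the right thing to verify and your appeal to naturality handles it.
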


\begin{proof}
  This follows from the fact that
  $\ j_* \ $
  is injective and compatible with suspensions (cf. theorem \ref{thm:2.16}\ref{thm:2.16a}).
  Moreover,
  $\ \omega_r \; = \; E^r \comp \omega^{\#} \ $
  (cf. \ref{equ:3.1}) agrees with
  $\ E^r \comp \kappa \comp \omega^{\#} \ $
  up to an involution on
  $\ \pi_*(S^{n+r} \wedge (\Omega Y)^+ ) \ $
  of the form
  $\ (d \wedge \id)_* \ $
  where the map
  $\ d \; \colon \; S^{n+r} \; \longrightarrow \; S^{n+r} \ $
  has degree
  $\ (-1)^n$.
\end{proof}

\section{Computing Nielsen numbers}\label{sec:4}

\indent

In this section we prove theorem \ref{thm:1.11}
and proposition \ref{prop:1.12}

Let
$\ p \; \colon \; \widetilde{Y} \; \longrightarrow \; Y \ $
be a universal covering map of the $n$--dimensional manifold
$\ Y \, , n \geq 2 \, $,
and pick a basepoint
$\ \widetilde{y}_0 \; \in \; \widetilde{Y} \ $
such that
$\ p(\widetilde{y}_0) \; = \; y_0 \, $.
Also denote the number of pathcomponents of the loop space
$\ \Omega Y \; = \; \Omega(Y, y_0) \ $
by
\begin{equation*}
  k \; := \; \# \pi_1(Y) \; = \; \# \pi_0(\Omega Y) \, ,\ \ 1 \leq k \leq \infty \, .
\end{equation*}
Given homotopy classes
$\ [f], [f_1], [f_2], \ldots \; \in \; \pi_m(Y, y_0) \, , \ m \geq 2 \, $,
let
$\ [\widetilde{f}], [\widetilde{f}_1], [\widetilde{f}_2], \ldots \; \in \; \pi_m(\widetilde{Y}, \widetilde{y}_0) \ $
be their liftings.

The Nielsen number
$\ N_r(f_1, f_2) \ $
is extracted from the coincidence data
$\ ( i_r, \widetilde{g}, \bar{g}_r ) \ $
of a generic coincidence manifold
$\ C \ $
(cf. \ref{equ:1.1}) as follows,
$\ r = 0, 1, \ldots, \infty \, $.
Since the domain of
$\ f_1, f_2 \ $
is a sphere,
$\ \widetilde{g} \ $
maps
$\ C \ $
into the loop space
$\ \Omega Y \ $
(after suitable homotopies; cf. \cite{K2}, 2.4,
and compare also section \ref{sec:3} above).
Thus
$\ C \ $
is the disjoint union of
the {\itshape Nielsen classes}
$\ C_A \; = \; \widetilde{g}^{-1}(A) \, , \ A \; \in \; \pi_0(\Omega Y) \, $.
Such a Nielsen class
$\ C_A \ $
is called nonessential or essential, according to whether or not the coincidence data
$\ ( i_r, \widetilde{g}, \bar{g}_r ) \, $,
when restricted to
$\ C_A \, $,
form a nullbordant triple.
{\bfseries By definition
$\boldsymbol{\ N_r(f_1, f_2) \; \in \; \lbrace 0, 1, \ldots, k \rbrace \ }$
is the number of essential Nielsen classes.}

Clearly Nielsen numbers do not depend on the choice of the local orientation of
$\ Y \ $
at
$\ y_0 \ $
and of the path
$\ \gamma \ $
which play a r\^ole in the construction of
$\ \omega_r(f_1, f_2) \ $
(compare the proof of corollary \ref{cor:3.7}).

\begin{proof}[Proof of theorem \ref{thm:1.11}]\label{proof:thm:1.11alt}
  In the case 1	
  of the theorem our claim follows from proposition 1.3 in \cite{K4}.
  
  If
  $\ k \ $
  is finite and
  $\ n \geq 2 \, $,
  consider first the coincidence data of a pair of the form
  $\ (*, f) \ $
  where
  $\ * \neq y_0 \ $
  and
  $\ [f] \; \in \; \pi_m(Y, y_0) \, $.
  After suitable isotopies of
  $\ \widetilde{Y} \ $
  and deformations of
  $\ \widetilde{f} \ $
  we may assume that
  \begin{enumerate}[label=\roman*.)]
    \item there is a smoothly embedded open $n$--ball
      $\ \mathring{B} \; \subset \; \widetilde{Y} \setminus \lbrace \widetilde{y}_0 \rbrace \ $
      which contains all the points
      $\ p^{-1}( \lbrace * \rbrace ) \; = \; \lbrace \widetilde{*} \; = \; \widetilde{*}_1, \widetilde{*}_2, \ldots, \widetilde{*}_k \rbrace \; \subset \widetilde{Y} \ $
      over
      $\ * \; \in \; Y \,$;
    \item 
      $ \widetilde{f} \ $
      is smooth with regular value
      $\ \widetilde{*} \; = \; \widetilde{*}_1 \ $
      and maps a tubular neighbourhood
      \begin{equation*}
        \widetilde{C} \times \mathring{B} \; = \; \mathring{T} \; \subset \; \R^m \; \subset \; \R^m \cup \lbrace \infty \rbrace \; = \; S^m
      \end{equation*}
      of
      $\ \widetilde{C} \; := \; \widetilde{f}^{-1}(\lbrace \widetilde{*} \rbrace) \ $
      to
      $\ \mathring{B} \ $
      via the obvious projection; and
    \item
      $\ \widetilde{f}(S^m \setminus \mathring{T}) \; \subset \; \widetilde{Y} \setminus \mathring{B} \, $.
  \end{enumerate}
  Then the generic coincidence manifold
  $\ C \; := \; f^{-1}(\lbrace * \rbrace) \; = \; \widetilde{f}^{-1}( \lbrace \widetilde{*}_1, \widetilde{*}_2, \ldots, \widetilde{*}_k \rbrace ) \ $
  consists of the ("parallel") Nielsen classes
  \begin{equation*}
    \widetilde{C}_i \; = \; \widetilde{C} \times \lbrace \widetilde{*}_i \rbrace \; \subset \; \widetilde{C} \times \mathring{B} \; \subset \; S^m \, , \ i = 1, \ldots, k \, ,
  \end{equation*}
  which are simultaneously either all nonessential or essential, according as the coincidence data of
  $\ \widetilde{C} \; = \; \widetilde{C} \times \lbrace *_1 \rbrace \ $
  are nullbordant (or, equivalently
  $\ \omega_r(\widetilde{*}, \widetilde{f}) \; = \; 0 \, $)
  or not.
  Indeed, given a homotopy
  $\ G \; \colon \; \widetilde{C} \times I \; \longrightarrow \; S^m \ $
  from the inclusion
  $\ \widetilde{C} \; \subset \; S^m \ $
  to a constant map at
  $\ \infty \, $,
  base the construction of the $\omega$--invariant on the concatenation of
  $\ G \ $
  with the straight path
  $\ \widetilde{c}_i \ $
  from
  $\ \lbrace \widetilde{*}_i \rbrace \ $
  to
  $\ \lbrace \widetilde{*} \rbrace \ $
  in
  $\ \mathring{B} \, , i = 1, \ldots, k \ $
  (see the beginning of our section \ref{sec:3} above).
  Since the loops
  $\ p \comp \widetilde{c}_i \ $
  in
  $\ Y \ $ are pairwise nonhomotopic we get the Nielsen decomposition
  $\ C \; = \; \nonComm \widetilde{C}_i \ $
  with equally strong components.
  
  In contrast the coincidence data
  $\ [ i_r, \widetilde{g}, \bar{g}_r ] \ $
  of a pair of the form
  $\ (f, f) \ $
  have the special property that
  $\ \widetilde{g} \ $
  is homotopic to a constant map (cf. \cite{K3}, (21)).
  Thus the $\omega$--invariants of
  $\ (f, f) \ $
  and
  $\ (\widetilde{f}, \widetilde{f} ) \ $
  are equally strong and nontrivial precisely if the pathcomponent of the trivial loop in
  $\ \Omega Y \ $ corresponds to an essential Nielsen class.
  
  Next consider an arbitrary pair
  $\ (f_1, f_2) \, , \ [f_1], [f_2] \; \in \; \pi_m(Y, y_0) \, $.
  Use the chosen path
  $\ \gamma \ $
  from
  $\ y_0 \ $
  to
  $\ * \ $
  (cf. § \ref{sec:3}) and a small neighbourhood of the basepoint
  $\ \infty \ $
  in
  $\ S^m \ $
  to deform
  $\ f_1 \ $
  to a map
  $\ f_1' \; \colon \; (S^m, \infty ) \; \longrightarrow \; (Y, *) \, $.
  According to (\ref{equ:3.2})
  \begin{equation*}
    \left( \omega_r(f_1, f_2) = \right) \ \omega_r(f_1', f_2) \; = \;
    \omega_r(f_1', f_1) - \omega_r(*, f_1) + \omega_r(*, f_2) \, .
  \end{equation*}
  Applying our previous discussion to
  $\ [f] \; = \; [f_1] - [f_2] \,$,
  we see that the nontrivial elements of
  $\ \pi_0(\Omega Y) \; \cong \; \pi_1(Y) \ $
  yield essential Nielsen classes if and only if
  \begin{equation}\label{equ:4.1}
    \omega_r(\widetilde{*}, \widetilde{f}_1) \; \neq \; \omega_r(\widetilde{*}, \widetilde{f}_2) \, .
  \end{equation}
  The trivial element of
  $\ \pi_0(\Omega Y) \ $
  contributes an essential Nielsen class precisely if
  \begin{equation}\label{equ:4.2}
    \omega_r(\widetilde{f}_1', \widetilde{f}_1) - 
    \omega_r(\widetilde{*}, \widetilde{f}_1) + 
    \omega_r(\widetilde{*}, \widetilde{f}_2) \; \neq \; 0 \, .
  \end{equation}
  
  Now assume that
  $\ \widetilde{Y} \ $
  allows a {\itshape fixed point free} selfmap
  $\ a \, $.
  It is freely homotopic to a basepoint preserving map
  \begin{equation*}
    a^{\hochdot} \; \colon \; ( \widetilde{Y}, \widetilde{y}_0 ) \; \longrightarrow \; ( \widetilde{Y}, \widetilde{y}_0 ) \, .
  \end{equation*}
  Then
  $\ \omega_r(\widetilde{f}_1', a^{\hochdot} \comp \widetilde{f}_1) \; = \; \omega_r(\widetilde{f}_1, a \comp \widetilde{f}_1) \; = \; 0 \ $
  and
  \begin{equation}\label{equ:4.3}
    \omega_r(\widetilde{f}_1', \widetilde{f}_1) \; = \;
    \omega_r(\widetilde{f}_1', \widetilde{f}_1) - \omega_r(\widetilde{f}_1', a^{\hochdot} \comp \widetilde{f}_1)
    \; = \; \omega_r(\widetilde{*}, \widetilde{f}_1) - \omega_r(\widetilde{*}, a^{\hochdot} \comp \widetilde{f}_1) \, .
  \end{equation}
  \addtocounter{equation}{-2}
  \begin{subequations}
    \renewcommand{\theequation}{\theparentequation.\alph{equation}}
    Therefore condition (\ref{equ:4.2}) takes the form
    \begin{equation}\label{equ:4.2.a}
      \omega_r(\widetilde{*}, a^{\hochdot} \comp \widetilde{f}_1) \; \neq \; \omega_r(\widetilde{*}, \widetilde{f}_2) \, .
    \end{equation}
  \end{subequations}
  \addtocounter{equation}{1}
  This agrees with condition (\ref{equ:4.1}) if the Euler characteristic of
  $\ \widetilde{Y} \ $
  vanishes (e.g. when
  $\ n \ $
  is odd); indeed, a vector field without zeros yields a fixed point free selfmap
  $\ a \ $
  of
  $\ \widetilde{Y}\ $
  which is isotopic to the identity map
  $\ a^{\hochdot} \; = \; \id $.
  
  Finally apply the isomorphism
  $\ j_* \ $
  to the conditions (\ref{equ:4.1}) and (\ref{equ:4.2.a}) and use corollaries \ref{cor:3.7} and \ref{cor:3.8}.
  Also note that
  $\ H_{\KoschScript{C}}(\widetilde{f}_1) \; = \; H_{\KoschScript{C}}( a^{\hochdot} \comp \widetilde{f}_1 ) \ $
  since
  $\ \omega_r(\widetilde{f}_1', \widetilde{f}_1) \, $,
  and
  $\ j_*( \omega_r(\widetilde{f}_1', \widetilde{f}_1) ) \, $,
  resp., lie already in the subgroup
  $\ \pi_m(S^n) \ $
  of
  $\ \pi_m(S^n \wedge (\Omega \widetilde{Y})^+ ) \, $,
  and of
  $\ \Ker_{m,n}(\widetilde{Y}) \, $,
  resp. (compare (\ref{equ:3.6}) and \cite{K5}, 5.6).
  This completes the proof of theorem \ref{thm:1.11}.
\end{proof}

\begin{proof}[Proof of Proposition \ref{prop:1.12}]\label{proof:prop:1.12}
  We need to study only the arguments in the previous proof which deal with the case 2 of theorem \ref{thm:1.11}.
  All Nielsen classes are simultaneously essential or nonessential (i.e.
  $\ N_r(f_1, f_2) \; \in \; \lbrace 0, k \rbrace $)
  except possibly when
  $\ \omega_r(\widetilde{f}_1, \widetilde{f}_1) \, \neq \, 0 \ $
  (cf. (\ref{equ:4.2})).
  But in this case also
  $\ \omega_r(f_1, f_1) \, = \, E^r(\omega^{\#}(f_1, f_1) ) \ $
  and hence
  $\ \omega^{\#}(f_1, f_1) \ $
  are nontrivial.
  Thus all the restrictions listed in proposition \ref{prop:1.12} follow from \cite{K4}, proposition 1.3, and \cite{K6}, theorem 1.32.
\end{proof}

\section{Spherical space forms}\label{sec:5}

\indent

In this section we prove theorem \ref{thm:1.15} and its corollaries.

Let
$\ Y = S^n \diagup G \ $
be a spherical space form as in \ref{thm:1.15}; thus
$\ \widetilde{Y} \; = \; S^n $.
In view of the criteria (\ref{equ:4.1}) and (\ref{equ:4.2.a}) we need to apply theorem \ref{thm:3.3} only to (lifted) homotopy classes
$\ \widetilde{\varphi} \; \in \; \pi_m(S^n) $.
For the calculation of Nielsen numbers we may assume that the $n$--ball
$\ B \subset S^n \ $
(used in (\ref{equ:1.8}), (\ref{equ:1.9}) and in the construction of the pinching map in theorem \ref{thm:3.3}) is a suitable halfsphere, endowed with the standard orientation of
$\ S^n $.
Then
$\ b \sim \id \ $
in (\ref{equ:1.8}) and
\begin{equation}\label{equ:5.1}
  \pinch_*(\widetilde{\varphi}) \; = \; ( \iota_1 + \iota_2 ) \comp \widetilde{\varphi} \; \in \; \pi_m(S^n \vee S^n) \, ,
\end{equation}
where
$\ \iota_1 \ $
and
$\ \iota_2 \ $
are represented by the two obvious inclusions of
$\ S^n \ $
onto
$\ S^n \vee S^n $.
Using Hilton's choice of basic Whitehead products and applying his theorem A (in \cite{H})
we conclude that
\begin{equation}\label{equ:5.2}
  \left( \pinch_* - \; \iota_{2*} \comp p_{2*} \comp \pinch_* \right) (\widetilde{\varphi}) \; = \; 
  \iota_1 \comp \widetilde{\varphi}
  + \sum_{j \geq 1} w_j' \comp h_j'(\widetilde{\varphi})
  + \sum w_k'' \comp h_k''(\widetilde{\varphi}) \,;
\end{equation}
here the last two sums to the right involve those basic Whitehead products of
$\ \iota_1 \ $
and
$\ \iota_2 \ $
which contain
$\ \iota_1 \ $
precisely once (cf. \ref{equ:1.14}), and at least twice, resp.

Now according to theorems \ref{thm:2.16}\ref{thm:2.16a} and \ref{thm:3.3},
$\ \omega_r(\widetilde{*}, \widetilde{\varphi}) \; = \; E^r(\omega^{\#}(\widetilde{*}, \widetilde{\varphi}) ) \; = \; 0 \ $
or, equivalently,
$\ E^r(\kappa (\omega^{\#}(\widetilde{*}, \widetilde{\varphi}) ) ) \; = \; 0 \ $
(cf. \ref{thm:3.3}) if and only if the iterated partial suspension homomorphism
$\ e^r \ $
annihilates the right hand term in equation (\ref{equ:5.2}).
Denote this term by
$\ \tau $.
It vanishes precisely if its first summand and hence
$\ \widetilde{\varphi} \ $
itself does (by theorem A in \cite{H}).
When
$\ r \geq 1 \ $
our theorem \ref{thm:2.16}\ref{thm:2.16b} and corollary \ref{cor:2.17}, together again with Hilton's result (applied to
$\ S^{n+r} \vee S^n \ $)
imply that
\begin{equation*}
  e^r(\tau) \; = \; \iota_1 \comp E^r(\widetilde{\varphi})
  + \sum_{j \geq 1} e^r (w_j') \comp E^r(h_j' (\widetilde{\varphi}) ) \; = \; 0
\end{equation*}
if and only if
$\ E^r(\widetilde{\varphi}) \; = \; 0 \ $
and
$\ \pm E^r \comp h_j'(\widetilde{\varphi}) \; = \; 0 \ $
for all
$\ j \geq 1 $,
i.e.
$\ E^r \comp h(\widetilde{\varphi}) \; = \; 0 $.

Finally put
$\ \widetilde{\varphi} \; := \; [\widetilde{f}_1] - ( \pm \iota ) \comp [\widetilde{f}_2] \ $
and apply our criteria (\ref{equ:4.1}) and (\ref{equ:4.2.a});
note also that the antipodal map
$\ a \ $
is freely homotopic to a representative
$\ a^{\hochdot} \ $
of the generator
$\ (-1)^{n+1} \cdot \iota \ $
of
$\ \pi_n(S^n, y_0) $.
Theorem \ref{thm:1.15} and its corollaries \ref{cor:1.16} and \ref{cor:1.17} follow.
For corollary \ref{cor:1.19} compare also theorem 4.18 in \cite{BS}.
\qed

\begin{rem}\label{rem:5.3}
  The discussion following formula \ref{equ:4.3} implies, in particular, that
  \begin{equation*}
    h'([f]) \; = \; h'(( (-1)^{n+1} \cdot \iota ) \comp [f] )
  \end{equation*}
  for all
  $\ [f] \; \in \; \pi_m(S^n), \, m, n \geq 2 $.
  This has been used to simplify the criterion in Example \ref{exa:1.18}.
\end{rem}

\begin{proof}[Proof of corollary \ref{cor:1.20}]\label{proof:prop:1.20}
  As in the proof of proposition \ref{prop:1.12} we need to show only that
  $\ \omega_{\infty}(\widetilde{f}_1, \widetilde{f}_1 ) \ $
  vanishes (compare (\ref{equ:4.2})) or, {\itshape in the language of \cite{K2}, theorem 1.14}, that
  \begin{equation*}
    \widetilde{\omega}_j(\widetilde{f}_1, \widetilde{f}_1) \; := \; h_j(\widetilde{\omega}(\widetilde{f}_1, \widetilde{f}_1)) \; = \; 0 , \ \ j= 1, 2, \ldots \,.
  \end{equation*}
  When
  $\ j \geq 2 \ $
  the 'Hopf invariant component'
  $\ h_j(\widetilde{\omega}(\widetilde{f}_1, \widetilde{f}_1)) \ $
  is indeed trivial in this selfcoincidence situation (since
  $\ \widetilde{\omega}(\widetilde{f}_1, \widetilde{f}_1) \ $
  lies already in the subgroup
  $\ \Omega^{fr}_{m-n} \ $
  of
  $\ \Omega^{fr}_{m-n}(\Omega Y) $.
  Similarly
  \begin{equation*}
    \widetilde{\omega}_1(\widetilde{f}_1, \widetilde{f}_1) \; = \; \widetilde{deg}_1(\widetilde{f}_1) \pm \widetilde{deg}_1(\widetilde{f}_1) \; \in \; \pi_{m-n}^S
  \end{equation*}
  (cf. \cite{K2}, theorem 1.14) vanishes in view of our assumption
  $\ 2 \cdot \pi_{m-n}^S \; = \; 0 $.
\end{proof}

Note that this assumption cannot be dropped.
E.g. if
$\ n = 4, 8, 12, 14, 16 \text{ or } 20 \ $
then there exist infinitely many homotopy classes
$\ [f] \; \in \; \pi_{2n-1}(\RP(n)) \ $
such that
$\ \omega_{\infty}(f,f) \; \neq \; 0 \ $
or, equivalentely,
$\ N_{\infty}(f,f) \; = \; 1 $.
Indeed, apply corollary \ref{cor:1.17} to desuspensions of elements of order
$ \ > 2 \ $
in
$\ \pi_{n-1}^S \ $
(see also \cite{K4}, example 1.26).

\section{Examples}\label{sec:6}

In this section we use theorem \ref{thm:1.15} to establish the claims in examples \ref{exa:1.21}, \ref{exa:1.22}, \ref{exa:1.25} and \ref{exa:1.27}, as well as Proposition \ref{prop:1.24} and Table \ref{tab:1.26}.

The first claim in example \ref{exa:1.21} follows from theorem \ref{thm:1.15} or its corollaries since
\begin{equation*}
  E^{\infty} \comp h \, = \, (E^{\infty}, E^{\infty} \comp h') \, \colon \, \pi_m(S^n) \, \longrightarrow \, \pi^S_{m-n} \oplus \pi^S_{m-2n+1}
\end{equation*}
(cf. \ref{equ:1.13} ff) is injective here.
Indeed, in the stable range
$\ m \, \leq \, 2n - 2 \ $
already
$\ E^{\infty} \ $
is an isomorphism;
when 
$\ m \, = \, 2n - 1 \ $
the needed injectivity follows from the exact EHP--sequence
\begin{equation*}
  \xymatrix @R=1.5cm @C=2.0cm {
    \Z \ar[r]^-{\cdot [\iota_n, \iota_n] }	&
    \pi_{2n - 1}(S^n) \ar[r]^-{ E^{\infty} }	&
    \pi^S_{n - 1}
  }
\end{equation*}
(cf. \cite{W}, Ch. XII, (2.3) and (2.4)) and from the fact that the classical Hopf invariant
\begin{equation*}
  H \, \colon \, \pi_{2n-1}(S^n) \xrightarrow{E^{\infty} \comp h'_1} \pi^S_0 \, \equiv \, \Z
\end{equation*}
(cf. \cite{W}, Ch. XI, (8.17)) takes the value
$\ 2 \ $
on
$\ [\iota_n, \iota_n] \ $
(cf. \cite{W}, Ch. XI, (2.5)).

According to the generalized "Wecken theorem" 1.10 in \cite{K2} the minimum number
$\ \MCC \ $
agrees always with
$\ N_0 \, \equiv N_{\infty} \ $
when
$\ m < 2n - 2 \, $.
The remaining claims in Example \ref{exa:1.21} follow from \cite{K6}, theorems 1.12, 1.27 and 1.29, and from \cite{KR}, theorems 1.13 and 1.16. \qed

Next let us prove the claims in Example \ref{exa:1.22}.
When
$\ n \, \geq \, 2 \ $
is even and
$\ m \, \leq n+3 \, $,
then
$\ n \, \leq \, 2n -1 \ $
(and hence
$\ E^{\infty} \comp h \ $
is injective by the preceding proof) or else
$\ n \, = \, 2 \ $
and
$\ m \, = \, 4 \ $
or
$\ 5\ $
(and then already
$\ E^{\infty} \ $
alone is injective
(cf. \cite{T}, Propositions 5.3, 5.6 and Theorem 14.1,i).
Thus again all Nielsen numbers agree among themselves, and also with
$\ \MCC \ $
(by \cite{K6}, theorems 1.12 and (the last claim in) 1.19). \qed

In view of theorem \ref{thm:1.15} suspensions of the Hopf--Hilton homomorphism
$\ h \ $
(cf. \ref{equ:1.13} ff) play possibly a decisive role also in arbitrary dimensions
$\ m, \, n \,$.

\begin{lem}\label{lem:6.1}
  $ \left( \#\Ker\left(E^r \comp h \right)\right)_{r=0,1,\ldots} \ $
  is a nondecreasing sequence of \emph{finite} integers
  $\ \leq \# \pi_m(S^n) \,$.
  In fact, we have more: if
  $\ \# \pi_m (S^n) \, = \, \infty \, $,
  then
  $\ E^{\infty} \comp h \ $
  is injective and
  $\ \# \Ker(E^{r} \comp h) \, = \, 1 \ $
  for all
  $\ r \, \geq \, 0 \,$.
\end{lem}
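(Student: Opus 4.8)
The plan is to split off the purely formal facts and then reduce everything to a single delicate case. First I would note that monotonicity and the upper bound are immediate: $E^{r+1}\comp h$ is obtained from $E^r\comp h$ by post-composing \emph{each} coordinate --- the identity coordinate $\id$ and every Hopf--Hilton coordinate $h_j'$ (cf.\ (\ref{equ:1.13Strich})) --- with one further standard suspension $E$, so $\Ker(E^r\comp h)\subseteq\Ker(E^{r+1}\comp h)$; and each of these kernels is by construction a subgroup of $\pi_m(S^n)$, which gives both $\#\Ker(E^r\comp h)\leq\#\pi_m(S^n)$ and finiteness whenever $\pi_m(S^n)$ is finite. By the classical computation of the homotopy of spheres (Serre), $\pi_m(S^n)$ is infinite only when $m=n$, or when $n$ is even and $m=2n-1$. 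Thus it remains to handle those two cases, and I would in fact prove the stronger statement that then $E^{\infty}\comp h$ is injective --- which forces $\Ker(E^r\comp h)=0$ for every finite $r$ (a class killed by $E^r$ is killed stably), yielding finiteness and the value $1$ at once.

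The case $m=n$ is trivial: the identity coordinate of $E^{\infty}\comp h$ is the stabilization $E^{\infty}\colon\pi_n(S^n)=\Z\xrightarrow{\ \cong\ }\pi^S_0=\Z$, so $E^{\infty}\comp h$ is already injective on this one coordinate.

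The case $n$ even, $m=2n-1$ is the heart of the matter, and I expect it to be the only real work. Here $\pi_{2n-1}(S^n)\cong\Z\oplus T$ with $T$ its finite torsion subgroup, and I would exploit two coordinates of $E^{\infty}\comp h$. First, the coordinate $h_1'\colon\pi_{2n-1}(S^n)\to\pi_{2n-1}(S^{2n-1})=\Z$ is, up to sign, the classical Hopf invariant $H$ (cf.\ Example \ref{exa:1.18} and \cite{W}, XI, 8.17); since $E^{\infty}$ is an isomorphism on $\pi_{2n-1}(S^{2n-1})$, the corresponding coordinate of $E^{\infty}\comp h$ is again $\pm H$, whose kernel is exactly $T$. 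Second, I would show that the identity coordinate $E^{\infty}\colon\pi_{2n-1}(S^n)\to\pi^S_{n-1}$ is injective on $T$: by the EHP--sequence (cf.\ \cite{W}, Ch.\ XII, (2.3), (2.4)) the kernel of $E\colon\pi_{2n-1}(S^n)\to\pi_{2n}(S^{n+1})$ is the image of $P$ on $\pi_{2n+1}(S^{2n+1})$, i.e.\ the cyclic subgroup generated by the Whitehead square $[\iota_n,\iota_n]$; since $n$ is even this element has infinite order (its classical Hopf invariant is $\pm2$, cf.\ \cite{W}, Ch.\ XI, (2.5)), so that kernel is torsion-free, and as $\pi_{2n}(S^{n+1})$ already lies in the stable range it is in fact all of $\Ker(E^{\infty})$. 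Hence $\Ker(E^{\infty}\comp h)\subseteq T\cap\Z[\iota_n,\iota_n]=0$.

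Putting the pieces together: whenever $\pi_m(S^n)$ is infinite we obtain $E^{\infty}\comp h$ injective, hence $\Ker(E^r\comp h)=0$ for all $r$; and when $\pi_m(S^n)$ is finite the finiteness and the bound are trivial, with monotonicity checked at the outset. The one step that is not bookkeeping is the identification, in the case $n$ even and $m=2n-1$, of $\Ker(E^{\infty})$ with the torsion-free cyclic group $\Z[\iota_n,\iota_n]$ via the EHP--sequence; everything else is formal.
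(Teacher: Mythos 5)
Your proposal is correct and follows essentially the same route as the paper: the monotonicity and boundedness are treated as formal, and the only substantive case ($n$ even, $m=2n-1$) is settled exactly as in the first part of Section \ref{sec:6}, using the EHP--sequence to identify $\Ker(E^{\infty})$ with the cyclic subgroup generated by $[\iota_n,\iota_n]$ and the fact that the classical Hopf invariant takes the value $\pm 2$ there. The only cosmetic difference is that you phrase the conclusion as $T\cap\Z[\iota_n,\iota_n]=0$ while the paper says $H$ is injective on $\Z[\iota_n,\iota_n]$; these are the same two ingredients.
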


The second claim is obvious when
$\ m \, = \, n \ $
and was established in the preceding discussion when
$\ m \, = \, 2n -1 \ $
and
$\ n \, \equiv \, 0(2) \, $.

\begin{proof}[Proof of Proposition \ref{prop:1.24}]\label{proof:prop:1.24}
  If condition \ref{prop:1.24:2} is satisfied then
  $\ \Ker(E^r \comp h) \, = \, \Ker(E^{r+1} \comp h) \ $
  by Lemma \ref{lem:6.1} and
  $\ E \ $
  is injective, when restricted to
  $\ \operatorname{Im}(E^r \comp h) \, $.
  Hence the criteria for
  $\ N_r \ $
  and
  $\ N_{r+1} \ $
  in theorem \ref{thm:1.15} agree and condition \ref{prop:1.24:1} holds.
  In turn this implies \ref{prop:1.24:4} and \ref{prop:1.24:3}.
  
  On the other hand, if \ref{prop:1.24:2} is not satisfied, then
  $\ \pi_m(S^n) \ $
  is finite by lemma \ref{lem:6.1} and contains a class
  $\ [\widetilde{f}] \ $
  such that
  \begin{equation*}
    E^r \comp h([\widetilde{f}]) \; \neq \; 0 \; = \; E^{r+1} \comp h([\widetilde{f}]) \, .
  \end{equation*}
  Thus according to theorem \ref{thm:1.15}
  \begin{equation*}
    N_r\lt\lt p \comp \rt [\widetilde{f}], 0 \rt \; \neq \; 0 \; = \; N_{r+1}\lt\lt p \comp \rt [\widetilde{f}], 0 \rt
  \end{equation*}
  and therefore
  $\ \#^0_r(m,Y) \, < \#^0_{r+1}(m,Y) \,$.
  This completes the proof.
\end{proof}

\vspace*{3mm}
Next we compute the cardinalities
$\ \#_r^i(m, Y) \ $
(cf. \ref{equ:1.23}) in a particularly simple special case.
\vspace*{3mm}

\begin{lem}\label{lem:6.2}
  Given
  $\ m, n \, \geq \, 2, \; n \text{ even} \, $,
  assume that
  $\ h' \, \equiv\, 0 \ $
  on
  $\ \pi_m(S^n) \ $
  (cf. \ref{equ:1.13Strich}).
  Consider the iterated suspension homomorphism
  $\ E^r \, \colon \, \pi_m(S^n) \, \longrightarrow \, \pi_{m+r}(S^{n+r}) \,$,
  and the (finite) cardinality
  $\ Q_r \, := \, \#\lbrace \alpha \in E^r(\pi_m(S^n)) \; \vert \; 2 \alpha \, = \, 0 \rbrace, \ 0 \, \leq \, r \, \leq \, \infty \,$.
  If
  $\ Y \, = \, S^n \diagup \Z_2 \ $
  as in theorem \ref{thm:1.15}, then
  \begin{align*}
    \#_r^0(m, Y) \, &= \, Q_r \cdot (\# \Ker E^r)^2 \, ; \\
    \#_r^1(m, Y) \, &= \, 2 \cdot \lt\lt \# E^r(\pi_m(S^n) \rt - Q_r \rt \; \cdot \; \lt \# \Ker E^r \rt^2 ; \\
    \#_r^2(m, Y) \, &= \, \lt \# \lt \pi_m(S^n) \rt \rt^2 - \#_r^0(m, Y) - \#_r^1(m, Y) .
  \end{align*}
  (All these cardinalities are finite except when
  $\ m \, = \, n \, $;
  in this case
  $\ \#_r^0(m, Y) \, = \, 1 \, $,
  but
  $\ \#_r^1(m, Y) \, = \, \#_r^2(m, Y) \, = \, \infty \,$).
  
  In particular, the number of pairs
  $\ \lt [f_1], [f_2] \rt, \; f_i \, \colon \, S^m \, \longrightarrow \, S^n \diagup \Z_2 \, , \ i\, = \, 1, 2 \, $,
  such that
  $\ N_0(f_1, f_2) \, = \, 0 \, $,
  is equal to
  $\ Q_0 \, $,
  i.e. to the number of elements of
  $\ \leq \, 2 \ $
  in
  $\ \pi_m(S^n) \, $.
\end{lem}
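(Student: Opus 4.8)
The plan is to reduce the statement to the third case of Theorem~\ref{thm:1.15} (the case $n$ even, $G\cong\Z_2$) and then do an elementary count. First I would note that for $m\ge 2$ the universal covering $p\colon S^n\to Y=S^n\diagup\Z_2$ induces an isomorphism $p_*\colon\pi_m(S^n)\xrightarrow{\ \cong\ }\pi_m(Y)$, so counting pairs $([f_1],[f_2])\in\pi_m(Y)^2$ is the same as counting pairs of liftings $([\widetilde f_1],[\widetilde f_2])\in\pi_m(S^n)^2$, and $N_r(f_1,f_2)$ is governed by the trichotomy of Theorem~\ref{thm:1.15}. Since $h'\equiv 0$ on $\pi_m(S^n)$ by hypothesis, the conditions ``$E^r\circ h'[\widetilde f_1]=E^r\circ h'[\widetilde f_2]$'' are automatically satisfied (both sides vanish), so $N_r(f_1,f_2)$ depends only on the three classes $E^r[\widetilde f_1]$, $E^r[\widetilde f_2]$ and $E^r((-\iota)\circ[\widetilde f_2])$ in $\pi_{m+r}(S^{n+r})$.

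The one point requiring care --- and the main obstacle --- is to identify $E^r((-\iota)\circ[\widetilde f])$ with $-E^r[\widetilde f]$. For $r\ge 1$ this holds unconditionally: for $n$ even the self-map of degree $-1$ acts on $\pi_m(S^n)$ by $\alpha\mapsto -\alpha+[\iota_n,\iota_n]\circ\lambda(\alpha)$ for a suitable $\lambda(\alpha)\in\pi_m(S^{2n-1})$ (the standard formula for the $(-1)$-action, cf.\ \cite{W}), and one suspension annihilates the Whitehead-product term, since $E$ kills Whitehead products. For $r=0$ I would invoke the hypothesis: the difference $(-\iota)\circ\alpha-(-\alpha)$ lies in $[\iota_n,\iota_n]\circ\pi_m(S^{2n-1})$ and is, up to sign, $[\iota_n,\iota_n]$ composed with the first Hopf--Hilton invariant $h_1'(\alpha)$ (compare the identification of $h_1'$ with the classical Hopf invariant used in Example~\ref{exa:1.18}), hence vanishes because $h'\equiv 0$. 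Consequently, writing $\alpha:=E^r[\widetilde f_1]$ and $\beta:=E^r[\widetilde f_2]$, both lying in $I_r:=E^r(\pi_m(S^n))$, Theorem~\ref{thm:1.15} simplifies to: $N_r=0$ iff $\alpha=\beta$ and $2\beta=0$; $N_r=1$ iff $2\beta\neq 0$ and $\alpha\in\{\beta,-\beta\}$; and $N_r=2$ otherwise. (One checks directly that, given $(\alpha,\beta)$, exactly one of these three cases holds, distinguishing according to whether $\alpha\notin\{\beta,-\beta\}$, or $\alpha\in\{\beta,-\beta\}$ with $2\beta=0$, or $\alpha\in\{\beta,-\beta\}$ with $2\beta\neq 0$.)

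Finally I would carry out the count. For any $\gamma\in I_r$ the preimage $(E^r)^{-1}(\gamma)\subset\pi_m(S^n)$ is a coset of $\Ker E^r$, hence has exactly $\#\Ker E^r$ elements. Summing over $\beta\in I_r$ and $\alpha\in I_r$ according to the trichotomy gives $\#_r^0(m,Y)=\sum_{\beta\in I_r,\,2\beta=0}(\#\Ker E^r)^2=Q_r\,(\#\Ker E^r)^2$ and $\#_r^1(m,Y)=\sum_{\beta\in I_r,\,2\beta\neq 0}(\#\Ker E^r)\cdot(2\,\#\Ker E^r)=2\bigl(\#E^r(\pi_m(S^n))-Q_r\bigr)(\#\Ker E^r)^2$, while the formula for $\#_r^2$ is forced, since $N_r$ always lies in $\{0,1,2\}$ and the total number of pairs is $(\#\pi_m(S^n))^2$. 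For finiteness: $\pi_m(S^n)$ is finite unless $m=n$, in which case $\pi_n(S^n)=\Z$, $E^r$ is an isomorphism (so $\#\Ker E^r=1$, $I_r=\Z$) and $Q_r=1$, giving $\#_r^0=1$ and $\#_r^1=\#_r^2=\infty$. The last assertion is the case $r=0$: then $\Ker E^0=0$, $I_0=\pi_m(S^n)$, and $Q_0$ is exactly the number of elements of order $\le 2$ in $\pi_m(S^n)$, whence $\#_0^0(m,Y)=Q_0$.
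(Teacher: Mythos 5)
Your proposal is correct and follows essentially the same route as the paper: you use $h'\equiv 0$ both to trivialize the Hopf--Hilton conditions in Theorem~\ref{thm:1.15} and to identify $(-\iota)\circ[\widetilde f_2]$ with $-[\widetilde f_2]$ (the paper cites \cite{W}, Ch.~XI, (8.12) for exactly this), then reduce to the same trichotomy on $(E^r[\widetilde f_1],E^r[\widetilde f_2])$ and count fibres of $E^r$ as cosets of $\Ker E^r$. Your extra remark that the Whitehead-product correction term dies under a single suspension (so the identification is automatic for $r\ge 1$) is a harmless refinement of the same argument.
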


\begin{proof}
  Our assumption
  $\ h' \, \equiv \, 0 \ $
  simplifies the criteria in theorem \ref{thm:1.15} considerably and implies also that
  $\ (- \iota ) \comp [ \widetilde{f}_2 ] \, = \, - [ \widetilde{f}_2 ] \ $
  (cf. \cite{W}, Ch. XI, (8.12)).
  Thus the set of pairs
  $\ \lt [f_1], [f_2] \rt \; \in \; \pi_m(Y)^2 \ $
  such that
  $\ N_r(f_1, f_2) \, = \, 0 \ $
  (or $\ = \, 1 \, $, resp.) is characterized by the following conditions:
  \begin{enumerate}[label=(\roman*)]
    \item\label{proof:item_1}
      $ E^r([ \widetilde{f}_2 ]) \ $
      is an element of order
      $\ \leq 2 \, $,
      or not, resp., in
      $\ E^r(\pi_m(S^n)) \, $;
      and
    \item\label{proof:item_2}
      $ [\widetilde{f}_1] \, \in \, (E^r)^{-1} \lbrace \pm E^r([f_2]) \rbrace \, $.
  \end{enumerate}
  The 'number' of elements
  $\ [f_2] \, \in \, \pi_m(Y) \ $
  satisfying condition \ref{proof:item_1} is
  $\ Q_r \cdot \# \Ker E^r\, $,
  and
  $\ \lt \lt \# \operatorname{Im} E^r \rt - Q_r \rt \, \cdot \, \# \Ker E^r \, $,
  resp..
  Each such
  $\ [f_2] \ $
  can be paired with as many as
  $\ \# \lbrace \pm E^r [\widetilde{f}_2] \rbrace \cdot \# \Ker E^r \ $
  homotopy classes
  $\ [f_1] \, \in \, \pi_m(Y) \ $
  in order to satisfy also condition \ref{proof:item_2}.
  
  If
  $\ m \, = \, n \, $,
  then
  $\ N_r (f_1, f_2) \, = \, 0 \, $,
  or $\ 1 \, $, or $\ 2 \, $,
  resp., according as
  $\ ([f_1], [f_2]) \, = \, 0 \, $,
  or lies in the remaining union of the diagonal and antidiagonal in
  $\ \pi_m(Y)^2 \, \cong \, \Z \times \Z \, $,
  or outside of this union, resp..
\end{proof}

Finally let us apply lemma \ref{lem:6.2} to {\bfseries the case
$\ \boldsymbol{m \, = \, 16, \ Y \, = \, S^6 \diagup \Z_2} \,$.
}
We use the computations in Toda's book \cite{T} (cf. theorems 7.3, 13.9 and the tables in Chapter XIV) as well as Serre's theorem (cf. e.g. \cite{T}, (13.1)).
\addtocounter{table}{2}
\begin{table}[H]
  \centering
\begin{xy}
  \xymatrix @R=0.5cm @C=0.0cm {
    &						&									&									&									&									&			&	\ar@{-}[dddddd]	&			&	\ar@{-}[dddddd]	&					&\\
    &\pi_{16+r}(S^{6+r})			&									&									&									&									&			&			&	\#\Ker(E^r)	&			&	E^r (\pi_{16}(S^6))		&\\
    \ar@{-}[rrrrrrrrrrr]&			&									&									&									&									&			&			&			&			&					&\\
    &\pi_{16}(S^6)	\ar@{->>}[d]_-{E}	&	\cong \ \Z_8	\ar@<2mm>[d]^-{\rotatebox{270}{$\cong$}}	&	\oplus \ \ \lbrace 0 \rbrace					&	\oplus \ \ \Z_2	\ar@<2mm>[d]^-{\rotatebox{270}{$\cong$}}	&	\oplus \ \ \Z_9	\ar@<2mm>@{->>}[d]				&			&			&	1		&			&	\Z_8 \oplus \Z_2 \oplus \Z_9	&\\
    &\pi_{17}(S^7)	\ar[d]_-{E}	&	\cong \ \Z_8	\ar@<2mm>[d]^-{\rotatebox{270}{$\cong$}}	&	\oplus \ \ \lbrace 0 \rbrace					&	\oplus \ \ \Z_2	\ar@<2mm>[d]^-{\rotatebox{270}{$\cong$}}	&	\oplus \ \ \Z_3	\ar@<2mm>[d]^-{\rotatebox{270}{$\cong$}}	&			&			&	3		&			&	\Z_8 \oplus \Z_2 \oplus \Z_3	&\\
    &\pi_{18}(S^8)	\ar@{->>}[d]_-{E}	&	\cong \ \Z_8	\ar[dr]_(.35){\cdot \pm 2}			&	\oplus \ \ \Z_8	\ar@<2mm>[d]^-{\rotatebox{270}{$\cong$}}	&	\oplus \ \ \Z_2	\ar@<2mm>[d]^-{\rotatebox{270}{$\cong$}}	&	\oplus \ \ \Z_3	\ar@<2mm>[d]^-{\rotatebox{270}{$\cong$}}	&	\oplus \ \ \Z_3	&			&	3		&			&	\Z_8 \oplus \Z_2 \oplus \Z_3	&\\
    &\pi_{19}(S^9)	\ar@{->>}[d]_-{E}	&	\cong \quad							&	\ \ \ \ \Z_8	\ar@<2mm>@{->>}[d]						&	\oplus \ \ \Z_2	\ar@<2mm>[d]^-{\rotatebox{270}{$\cong$}}	&	\oplus \ \ \Z_3	\ar@<2mm>[d]^-{\rotatebox{270}{$\cong$}}	&			&			&	6		&			&	\Z_4 \oplus \Z_2 \oplus \Z_3	&\\
    &\pi_{20}(S^{10})	\ar@{->>}[d]_-{E}	&	\cong \quad							&	\ \ \ \ \Z_4	\ar@<2mm>@{->>}[d]						&	\oplus \ \ \Z_2	\ar@<2mm>[d]^-{\rotatebox{270}{$\cong$}}	&	\oplus \ \ \Z_3	\ar@<2mm>[d]^-{\rotatebox{270}{$\cong$}}	&			&			&	12		&			&	\Z_2 \oplus \Z_2 \oplus \Z_3	&\\
    &\pi_{21}(S^{11})	\ar@{->>}[d]_-{E}	&	\cong \quad							&	\ \ \ \ \Z_2								&	\oplus \ \ \Z_2	\ar@<2mm>[d]^-{\rotatebox{270}{$\cong$}}	&	\oplus \ \ \Z_3	\ar@<2mm>[d]^-{\rotatebox{270}{$\cong$}}	&			&			&	24		&			&	\Z_2 \oplus \Z_3		&\\
    &\pi_{22+j}(S^{12+j}), \, j\geq 0		&	\quad\cong							&									&	\ \ \ \ \ \Z_2							&	\oplus \ \ \Z_3							&			&			&	24		&			&	\Z_2 \oplus \Z_3		&\\
    &						&									&									&									&									&			&	\ar@{-}[uuuuu]	&			&	\ar@{-}[uuuuu]	&					&
  }
\end{xy}
  \caption{The suspension homomorphisms on the groups
$\ \pi_{16+r}(S^{6+r}), \; r \, \geq\, 0 \, $,
as described by Toda \cite{T}.
The cyclic direct summands in the $\ i^{\mathrm{th}} \ $ row,
$\ i \, = \, 1, 2, 3, 4 \, $,
are generated by
$\ \nu_{6+r} \comp \sigma_{9+r} \, , \;
\sigma_{6+r} \comp \nu_{13+r} \, , \;
\eta_{6+r} \comp \mu_{7+r}
\ $
and
$\ \beta_1(6+r) \, $,
resp..}
  \label{tab:6.3}
\end{table}\addtocounter{equation}{1}

In particular, the suspension homomorphism from
$\ \pi_{15}(S^5) \ $
to
$\ \pi_{16}(S^6) \ $
is both onto and injective.
This implies not only that
$\ h' \, \equiv \, 0 \ $
on
$\ \pi_{16}(S^6) \, $,
but also that
$\ \MC(f_1, f_2) \, = \, \MCC(f_1, f_2) \, = \, N_0(f_1, f_2) \ $
for all pairs
$\ f_1, f_2 \, \colon \, S^{16} \, \longrightarrow \, S^6 \diagup \Z_2 \ $
(cf. theorems \ref{thm:1.28} and \ref{thm:1.30}; or else \cite{K3}, Corollary 6.10 and theorem 6.14 as well as \cite{K6}, theorem 1.19).
Moreover we can extract the explicit description of the groups
$\ \pi_{16 + r}(S^{6+r}), \  r=0, 1, \ldots \,$,
and of the relevant suspension homomorphism as listed in Table \ref{tab:6.3}.
E.g. it follows from \cite{T}, (4.4) and (7.19), that
\begin{equation*}
  E(\nu_8 \comp \sigma_{11}) \, = \, \pm 2 \, \sigma_9 \comp \nu_{16} \, .
\end{equation*}

Now the data in Table \ref{tab:1.26} and the claims in Example \ref{exa:1.25} follow immediatly from Lemma \ref{lem:6.2}, Table \ref{tab:6.3} and Proposition \ref{prop:1.24}.

Similarly, according to theorem \ref{thm:1.15}
\begin{equation*}
  \#^0_r(16, S^6) \, = \, \#\pi_{16}(S^6) \cdot \# \Ker E^r \, .
\end{equation*}
Therefore Table \ref{tab:6.3} (together with \cite{K3}, 6.10 and 6.14) yields also the claims in Example \ref{exa:1.27}. \qed

\section{The minimum numbers MC and MCC.}\label{sec:7}

\indent

In this section we discuss theorems \ref{thm:1.28}, \ref{thm:1.30} and some of their consequences.

Theorem \ref{thm:1.28} follows from \cite{K6}, Corollary 1.20, except when
$\ Y\ $
is a sphere.
But if
$\ Y \, = \, S^n \ $
and
$\ a^{\hochdot} \, \colon \, (S^n, y_0 ) \, \longrightarrow \, (S^n, y_0) \ $
is freely homotopic to the antipodal map and
$\ [f] \, := \, [f_1] - a^{\hochdot}_*[f_2] \,$,
then
\begin{equation*}
  \MCC(f_1, f_2) \, = \, \MCC(f, y_0) \, = \, N_0(f, y_0) \, = \, N_0(f_1, f_2) \,.
\end{equation*}
Indeed, if also
$\ n \, \geq \, 2 \ $
then
$\ ([f_1],[f_2]) \, = \, ([f], y_0) \, + \, (a^{\hochdot}_*[f_2], [f_2]) \ $
and
$\ \MCC(a \comp f_2, f_2) \, = \, 0 \,$;
moreover,
$\ \MCC(f, y_0) \, \leq \, 1 \ $
vanishes precisely when
$\ [f] \, = \, 0 \ $
or, equivalently, 
$\ N_0(f, y_0) \, = \, 0 \ $
(since by construction
$\ \omega^{\#}(f, y_0) \, \in \, \pi_m(S^n \wedge (\Omega S^n)^+) \ $
determines
$\ [f] \, \in \, \pi_m(S^n) \,$.)

If
$\ Y\, = \, S^1 \,$,
then our claim follows from theorem 1.13 in \cite{K2}.
This completes the proof of our theorem \ref{thm:1.28}. \qed

\begin{proof}[Proof of theorem \ref{thm:1.30}]\label{proof:thm:1.30}
  We may assume that
  $\ m \, \geq \, n \ $
  since otherwise
  $\ \pi_m(S^n) \, = \, 0 \ $
  and
  $\ \MC \, \equiv \, \MCC \, \equiv \, 0 \,$.
  The same holds if
  $\ m \, > \, n \, = \, 1 \,$.
  If
  $\ m \, = \, n \, = \, 1 \ $
  and we denote the mapping degree of
  $\ f_i \, \colon \, S^1 \, \longrightarrow \, S^1 \ $
  by
  $\ d^0(f_i), \, i \, = 1,2 \,$,
  and put
  $\ d^0 \, := \, d^0(f_1 \cdot f_2^{-1}) \, = \, d^0(f_1) - d^0(f_2) \,$, then
  \begin{equation*}
    \MC(f_1, f_2) \, = \, \MCC(f_1, f_2) \, = \, \left| d^0(f_1) - d^0(f_2) \right| \, < \, \infty
  \end{equation*}
  since
  $\ f_1 \cdot f_2^{-1} \ $
  is homotopic to the map
  $\ z \, \longrightarrow \, z^{d^0}\, , \; z \, \in \, S^1 \,$,
  whose roots of unity belong to pairwise different Nielsen classes.
  
  If
  $\ m \, > \, n \, = \, 2 \ $
  and
  $\ \MCC(f_1, f_2) \, < \, \infty \,$,
  then
  $\ \MC(f_1, f_2) \, = \, \MCC(f_1, f_2) \, = \, 0 \ $
  since each isolated coincidence point has an 'index' in
  $\ \pi_{m-1}(S^{n-1}) \, = \, 0 \ $
  and hence may be eliminated by small deformations (cf. \cite{K3}, (28)).
  If
  $\ m \, = \, n \, = \, 2 \,$,
  then claim \ref{thm:1.30}\ref{thm:1.30-item1} follows from \cite{J}, theorem 4.0.
  
  Now we can deduce the full claim \ref{thm:1.30-item2} in our theorem \ref{thm:1.30} from \cite{K3}, corollary 6.10, applied to
  $\ [f] \, := \, [f_1] - [f_2] \,$:
  just note that
  $\ \left| \MC(f_1, f_2) - \MC([f_1] - [f_2], y_0) \right| \, \leq \, \MC(f_2, f_2) \, \leq \, 1 \ $
  (cf. \cite{K3}, Proposition 6.2, and \cite{K6}, theorem 1.19).
  In order to complete also the proof of claim \ref{thm:1.30-item1} we may assume that
  $\ m, \; n \, \geq \, 3 \ $
  and -- in view of \cite{K3}, theorem 1.2 -- also that
  $\ \MC(f_1, f_2) \, \leq \, \# G \,$.
  If
  $\ \MC(f_1, f_2) \, \neq \, \MCC(f_1, f_2) \,$,
  then obviously
  $\ N_0(f_1, f_2) \, < \, \#G \ $
  and hence (by Corollary \ref{cor:1.16})
  $\ f_1 \, \sim \, f_2 \ $
  or
  $\ f_1 \, \sim \, a \comp f_2 \ $
  and therefore
  $\ \MC(f_1, f_2) \, = \, \MCC(f_1, f_2) \, = \, 0 \ $
  or
  $\ = \, 1 \ $
  (cf \cite{K6}, theorem 1.19).
  Contradiction.
\end{proof}

\begin{proof}[Proof of Corollary \ref{cor:1.32}]\label{proof:cor:1.32}
  (i)\label{proof:cor:1.32-item1} Assume that
  $\ n \ $
  is even and
  $\ m \, \leq \, 3n - 4 \,$.
  Then
  $\ \#G \, \leq \, 2 \ $
  and
  $\ h' \ $
  fits into the exact EHP--sequence (cf. \cite{W}, Ch. XII, 2.3)
  \begin{equation*}
    \pi_{m-1}(S^{n-1}) \, \overset{E}{\longrightarrow} \, \pi_m(S^n) \, \xrightarrow{H= h'} \, \pi_m(S^{2n-1}) \, \longrightarrow \ldots \ .
  \end{equation*}
  Indeed,
  $\ \pi_m(S^{2m-1}) \ $
  is stable and hence the Hopf--James invariant
  $\ H \ $
  agrees with the Hopf--Hilton invariant
  $\ h' \ $
  (cf. \cite{BS}, theorem 4.18).
  
  (ii)\label{proof:cor:1.32-item2} If
  $\ n \ $
  is odd then
  $\ \MCC(f_1, f_2) \, = \, N_0(f_1, f_2) \ $
  (cf. theorem \ref{thm:1.28}) is described in corollary \ref{cor:1.16}\ref{cor:1.16-item1}; here a
  $\,\sim \, \id \,$.
\end{proof}

Finally let us discuss Example \ref{exa:1.33}.
Here
$\ \underline{h}' \, \colon \, \pi_{m-1}(S^{n-1}) \, \longrightarrow \, \Z \ $
coincides with the classical Hopf invariant homomorphism (cf. \cite{W}, Ch. XI, 8.17) and is onto.
According to the first argument in section \ref{sec:6} above the Freudenthal suspension epimorphism
$\ E \ $
is injective when restricted to
$\ \Ker \underline{h}' \,$,
i.e. to the torsion subgroup of
$\ \pi_{m-1}(S^{n-1}) \,$.
An inspection of Toda's table I
(cf. \cite{T}, p.186) now shows us that
$\ E(\Ker(\underline{h}')) \ $
is a subgroup of index 2 of the cyclic group
$\ E(\pi_{m-1}(S^{n-1})) \, = \, \pi_m(S^n) \,$.
\qed

\vspace{1.5ex}
Since all closed surfaces but
$\ S^2\ $
and
$\ \RP(2)\ $
are aspherical the claims in Example \ref{exa:1.34} follow from
\begin{prop}\label{prop:7.1}
  Assume that
  $\ \pi_{m-1}(S^{n-1}) \, = \, 0 \ $
  where
  $\ m, n \, \geq \, 2 \,$.
  Then we have for all maps
  $\ f_1, f_2 \, \colon \, S^m \, \longrightarrow \, S^n \diagup G \,$:
  \begin{enumerate}[label=(\roman*)]
    \item\label{prop:7.1:1} If
      $\ f_1 \, \sim \, f_2 \,$,
      then
      $\ \MC(f_1, f_2) \, = \, \MCC(f_1, f_2) \, = \, N_r(f_1, f_2) \, = \, 0 \ $
      for all
      $\ r \, = \, 0, 1, \ldots \infty \,$.
    \item\label{prop:7.1:2} If
      $\ f_1 \, \not\sim \, f_2 \,$,
      then
      $\ \MC(f_1, f_2) \, = \, \infty \ $
      but
      $\ \MCC(f_1, f_2) \, = \, N_0(f_1, f_2) \, = \, \#G \,$.
  \end{enumerate}
\end{prop}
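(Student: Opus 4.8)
The plan is to reduce the statement to two auxiliary facts and then assemble the answer from Theorems \ref{thm:1.15}, \ref{thm:1.28}, \ref{thm:1.30} and Corollary \ref{cor:1.16}. Since $\pi_{n-1}(S^{n-1})\neq 0$, the hypothesis forces $m\neq n$; if $m<n$ then $\pi_m(S^n)=0$ and every assertion is immediate, so I shall assume $m>n$. For $f\colon S^m\to Y=S^n\diagup G$ write $\widetilde f\colon S^m\to S^n$ for a lifting (which exists as $m\geq 2$). \emph{Fact (A): $\MC(f,f)=0$ for every such $f$, hence $\MCC(f,f)=N_r(f,f)=0$ for all $r$} (the last by the chain (\ref{equ:1.5})). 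Indeed $p\colon S^n\to Y$ is a local diffeomorphism, so $f^{*}TY\cong\widetilde f{}^{*}TS^n$; the only obstruction to a nowhere--zero section of this rank--$n$ bundle over $S^m$ lies in $\pi_{m-1}(S^{n-1})=0$, so a section $s$ exists, and $x\mapsto\exp_{f(x)}(\varepsilon\, s(x))$ for small $\varepsilon>0$ is a map homotopic to $f$ and coincidence--free with $f$. \emph{Fact (B): every deck transformation $\widetilde g\colon S^n\to S^n$, $g\in G$, induces the identity on $\pi_m(S^n)$.} For $n$ odd a fixed--point--free self--map of $S^n$ has Lefschetz number $0$, hence degree $1$, hence is homotopic to $\id$. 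For $n$ even $\widetilde g$ has degree $\pm 1$; degree $+1$ gives $\widetilde g\simeq\id$, so it remains to treat the antipodal map $a$ (degree $-1$): given $[\widetilde f]\in\pi_m(S^n)$, Fact (A) (with $G$ trivial) gives $\MCC(\widetilde f,\widetilde f)=0$, and by Theorem \ref{thm:1.28} ($\MCC\equiv N_0$ for $Y\cong S^n$, unconditionally) together with Corollary \ref{cor:1.16}\ref{cor:1.16-item1} this forces $\widetilde f\sim a\comp\widetilde f$, i.e.\ $a_{*}[\widetilde f]=[\widetilde f]$. Two consequences follow: for $[f_i]\in\pi_m(Y)$ with liftings $[\widetilde f_i]$ one has $f_1\sim f_2\iff[\widetilde f_1]=[\widetilde f_2]$ (a free homotopy lifts to a homotopy from $\widetilde f_1$ to $\widetilde g\comp\widetilde f_2$ for some deck transformation, which by (B) represents $[\widetilde f_2]$; conversely $[\widetilde f_1]=[\widetilde f_2]$ gives $\widetilde f_1\simeq\widetilde f_2$ hence $f_1\simeq f_2$); and $a\comp\widetilde f\sim\widetilde f$ for every $\widetilde f$ (by (B) for $n$ even, since $a$ is then a deck transformation of $S^n\diagup\Z_2$, and trivially for $n$ odd since $a\simeq\id$).

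For part (i), suppose $f_1\sim f_2$, so $[\widetilde f_1]=[\widetilde f_2]$; since also $a\comp\widetilde f_i\sim\widetilde f_i$, Corollary \ref{cor:1.16} (case \ref{cor:1.16-item1} or \ref{cor:1.16-item2}) gives $N_0(f_1,f_2)=0$, whence $N_r(f_1,f_2)=0$ for all $r$ by (\ref{equ:1.5}). As $\pi_{m-1}(S^{n-1})=0$ the Wecken condition (\ref{equ:1.29}) holds trivially, so Theorem \ref{thm:1.28} gives $\MCC(f_1,f_2)=N_0(f_1,f_2)=0$. Finally $[\widetilde f_1]-[\widetilde f_2]=0$ lies in $E(\pi_{m-1}(S^{n-1}))=0=E(\Ker(\underline h'))$, so $\MC(f_1,f_2)<\infty$ by Theorem \ref{thm:1.30}, and therefore $\MC(f_1,f_2)=\MCC(f_1,f_2)=0$ by Theorem \ref{thm:1.30}\ref{thm:1.30-item1}.

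For part (ii), suppose $f_1\not\sim f_2$, i.e.\ $[\widetilde f_1]\neq[\widetilde f_2]$. Then $[\widetilde f_1]-[\widetilde f_2]\notin E(\pi_{m-1}(S^{n-1}))=0$ (resp.\ $\notin E(\Ker(\underline h'))=0$), so $\MC(f_1,f_2)=\infty$ by Theorem \ref{thm:1.30}. Since $a\comp\widetilde f_2\sim\widetilde f_2$ and $\widetilde f_1\not\sim\widetilde f_2$ — hence also $\widetilde f_1\not\sim a\comp\widetilde f_2$ — Corollary \ref{cor:1.16} yields $N_0(f_1,f_2)=\#G$ in every case ($n$ odd; $n$ even with $G=0$; $n$ even with $G\cong\Z_2$, where the middle alternative of Corollary \ref{cor:1.16}\ref{cor:1.16-item2} cannot occur because $\widetilde f_2\sim a\comp\widetilde f_2$). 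Theorem \ref{thm:1.28} (the Wecken condition again trivial) then gives $\MCC(f_1,f_2)=N_0(f_1,f_2)=\#G$.

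The main obstacle is Fact (B): the statement that a degree $-1$ self--map of $S^n$ ($n$ even) acts as the identity on $\pi_m(S^n)$ whenever $\pi_{m-1}(S^{n-1})=0$ is not apparent from homotopy theory alone, and the argument above extracts it from the geometry of self--coincidences combined with Corollary \ref{cor:1.16}. I would check with care that Theorem \ref{thm:1.28} and Corollary \ref{cor:1.16} genuinely apply to the self--pair $(\widetilde f,\widetilde f)$ of maps into $S^n$, and that the lifting conventions (base points, and free versus based homotopies for $\MC$, $\MCC$, $N_r$) are used consistently; the remaining steps are routine bookkeeping.
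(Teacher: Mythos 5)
Your proof is correct, and its backbone --- removability of coincidences because the relevant index/obstruction lives in $\pi_{m-1}(S^{n-1})=0$, then the finiteness criterion of Theorem \ref{thm:1.30}, the equality $\MCC\equiv N_0$ from Theorem \ref{thm:1.28} (the Wecken condition (\ref{equ:1.29}) being vacuous here), and the evaluation of $N_0$ via Corollary \ref{cor:1.16} --- coincides with the paper's. The differences are organizational. The paper removes the coincidences of an \emph{arbitrary} pair with $\MC<\infty$ by isolating the coincidence points and killing each one by a local deformation (its index lies in $\pi_{m-1}(S^{n-1})=0$), and it settles part (ii) by translating $(f_1,f_2)$ to the root pair $\bigl([f_1']-[f_2'],\,0\bigr)$ via the additivity of the invariants (\cite{K3}, 6.2) before quoting Corollary \ref{cor:1.16}. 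You instead prove only the self--coincidence case $\MC(f,f)=0$, by a global obstruction--theoretic push--off along a nowhere--zero section of $f^{*}TY$ (the same computation, packaged globally rather than cell by cell), and you promote to a lemma your Fact (B) --- that deck transformations and the antipodal map act trivially on $\pi_m(S^n)$ under the hypothesis --- which lets you feed Corollary \ref{cor:1.16} the pair $(f_1,f_2)$ directly. Fact (B) is precisely the ``curious consequence'' the paper records \emph{after} its proof, obtained there the same way (from $\MC(f,\,a\comp f)=0$ and Theorem \ref{thm:1.30}), so your worry about it is legitimate but your derivation is sound; note that for it you only need the inequality $\MCC\geq N_0$ from (\ref{equ:1.5}), not the full strength of Theorem \ref{thm:1.28}. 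Your explicit reduction of $f_1\sim f_2$ to $[\widetilde f_1]=[\widetilde f_2]$ fills in a step the paper leaves implicit (it obtains the chain $f_1\sim f_2\Rightarrow\MC\leq 1\Rightarrow[\widetilde f_1]=[\widetilde f_2]\Rightarrow f_1\sim f_2$ without invoking the action of the deck group), and there is no circularity since Theorems \ref{thm:1.28}, \ref{thm:1.30} and Corollary \ref{cor:1.16} are established independently of Proposition \ref{prop:7.1}.
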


\begin{proof}
  If
  $\ \MC(f_1, f_2) \, < \, \infty \,$,
  the maps
  $\ f_i \ $
  may be deformed until they have only isolated coincidence points.
  Each of these can be removed by a further local deformation since its index (very similar to the index of a vector field at an isolated zero, cf. \cite{K3}, (28)) lies in
  $\ \pi_{m-1}(S^{n-1}) \, = \, 0 \,$.
  Thus
  $\ \MC(f_1, f_2) \ $
  vanishes (and so do
  $\ \MCC(f_1, f_2)\ $
  and the Nielsen numbers).
  According to theorem \ref{thm:1.30} this happens precisely when
  $\ [f_1'] \, = \, [f_2']\ $
  where
  $\ f_i' \ $
  is any basepoint preserving map freely homotopic to
  $\ f_i, \, i\, = \, 1,2 \,$.
  Hence
  $\ f_1 \, \sim \, f_2 \,$;
  in turn,
  $\ \MC(f_1, f_2) \, \leq \, 1 \ $
  whenever
  $\ f_1 \, \sim \, f_2 \ $
  (cf. \cite{K6}, theorem 1.19).
  The previous argument shows also that
  $\ \MC(f_2, f_2) \, = \, 0 \ $
  even when
  $\ f_1 \, \not\sim \, f_2 \,$.
  Thus the pairs
  $\ (f_1, f_2) \ $
  and
  $\ ([f_1'] - [f_2'], 0) \ $
  have the same minimum and Nielsen numbers (cf. \cite{K3}, 6.2).
  Claim \ref{prop:7.1:2} follows now from theorem \ref{thm:1.28} and corollary \ref{cor:1.16}.
\end{proof}

\vspace{1ex}
It is a curious consequence of the last proof that each map
$\ f \, \colon \, S^m \, \longrightarrow \, S^n \ $
is homotopic to its composite
$\ a \, \comp \, f \ $
with the antipodal map
$\ a \ $
whenever
$\ m, n \, \geq \, 2 \ $
and
$\ \pi_{m-1}(S^{n-1}) \, = \, 0 \,$.
Indeed, clearly
$\ \MC(f, \, a \comp f) \, = \, 0 \,$.

The papers [K1]--[K7] and [KR] are available online at\\
http://www.uni-siegen.de/fb6/rmi/topologie/publications.html\,.


\begin{thebibliography}{Kosch}


	\bibitem[B]{B} \textit{R. Brown},
	Wecken properties for manifolds,
	Contemp. Math. \textbf{152} (1993), 9-21.

\bibitem[Ba]{Ba} \textit{H. J. Baues},
	Obstruction theory,
	Springer LNiM \textbf{628} (1977).

\bibitem[BoS]{BoS} \textit{J. M. Boardman and B. Steer},
	On Hopf invariants,
	Comm. Math. Helv. \textbf{42} (1967), 180--221.

\bibitem[Br]{Br} \textit{R. Brooks},
	On removing coincidences of two maps when only one, rather than both, of them may be deformed by a homotopy,
	Pac. J. Math. \textbf{39} no.3 (1971), 45-52.	

\bibitem[BS]{BS} \textit{R. Brown} and \textit{H. Schirmer},
	Nielsen coincidence theory and coincidence-producing maps for manifolds with boundary,
	Topol. Appl. \textbf{46} (1992), 65--79.

\bibitem[C]{C} \textit{O. Cornea},
	New obstructions to the thickening of CW-complexes,
	Proc. Am. Math. Soc. \textbf{132} (2004), 2769-2781.

\bibitem[Cr]{Cr} \textit{M. Crabb},
	The homotopy coincidence index,
	J. Fixed Point Theory Appl. \textbf{7} (2010), 1--32.

\bibitem[CLOT]{CLOT} \textit{O. Cornea}, \textit{G. Lupton}, \textit{J. Oprea} and \textit{D. Tanr{\'e}},
	Lusternik-Schnirelmann category,
	AMS Mathematical Surveys and Monographs \textbf{103} (2003).

\bibitem[FGT]{FGT} \textit{L. Fern{\'a}ndez-Su{\'a}rez}, \textit{A. G{\'o}mez-Tato} and \textit{D. Tanr{\'e}},
	Hopf-Ganea invariants and weak LS category,
	Topology Appl. \textbf{115} (2001), 305-316.

\bibitem[G]{G} \textit{T. Ganea},
	A generalization of the homology and homotopy suspension,
	Comment. Math. Helv. \textbf{39} (1964/65), 295-322.

\bibitem[H]{H} \textit{P. Hilton},
	On the homotopy groups of the union of spheres,
	J. Lond. Math. Soc. \textbf{30} (1955), 154-172.

\bibitem[J]{J} \textit{J. Jezierski},
	The least number of coincidence points on surfaces,
	J. Austral. Math. Soc. (Series A) \textbf{58} (1995), 27-38.

\bibitem[K1]{K1} \textit{U. Koschorke},
	Selfcoincidences in higher codimensions,
	J. reine angew. Math. \textbf{576} (2004), 1-10.

\bibitem[K2]{K2} ---------,
	Nielsen coincidence theory in arbitrary codimensions,
	J. reine angew. Math. \textbf{598} (2006), 211-236.

\bibitem[K3]{K3} ---------,
	Nonstabilized Nielsen coincidence invariants and Hopf-Ganea homomorphisms,
	Geom. Topol. \textbf{10} (2006), 619-666.

\bibitem[K4]{K4} ---------,
	Selfcoincidences and roots in Nielsen theory,
	J. Fixed Point Theory Appl. \textbf{2},2 (2007), 241-259.

\bibitem[K5]{K5} ---------,
	Minimizing coincidence numbers of maps into projective spaces,
	Geometry and Topology Monographs \textbf{14} (2008), 373-391.

\bibitem[K6]{K6} ---------,
	Minimum numbers and Wecken theorems in topological coincidence theory. I,
	J. Fixed Point Theory Appl. \textbf{10} (2011), 3-36.

\bibitem[K7]{K7} ---------,
	Nielsen numbers in topological coincidence theory,
	Topology Appl. \textbf{159} (2012), 3786-3796.

\bibitem[KR]{KR} \textit{U. Koschorke} and \textit{D. Randall},
	Kervaire invariants and selfcoincidences,
	Geom. Topol., to appear.

\bibitem[T]{T} \textit{H. Toda},
	Composition methods in homotopy groups of spheres,
	Annals of Mathematics Studies \textbf{49}, Princeton University Press, 1962.

\bibitem[W]{W} \textit{G. Whitehead},
	Elements of homotopy theory,
	Graduate Texts in Mathematics \textbf{61}, Springer-Verlag, 1978.

\end{thebibliography}
\end{document}